\def\comment#1{}
\theoremstyle{definition}
\newtheorem{ntn}{Notation}[section]
\newtheorem*{notn}{Notation}
\newtheorem{dfn}[ntn]{Definition}
\theoremstyle{plain}
\newtheorem{lem}[ntn]{Lemma}
\newtheorem{prp}[ntn]{Proposition}
\newtheorem{thm}[ntn]{Theorem}
\newtheorem{cor}[ntn]{Corollary}
\theoremstyle{remark}
\newtheorem{exa}[ntn]{Example}
\def\schluss{\hfill\ensuremath{\diamond}}
\newcommand{\mbc}{\mathbb{C}}
\newcommand{\mbn}{\mathbb{N}}
\newcommand{\ra}{\rightarrow}
\newcommand{\lra}{\longrightarrow}
\newcommand{\mco}{\mathcal{O}}
\newcommand{\mcv}{\mathcal{V}}
\DeclareMathOperator{\Spec}{\textup{Spec}\,}
\DeclareMathOperator{\sHom}{\mathcal{H}\mathit{om}}
\DeclareMathOperator{\pr}{\mathit{p}}
\DeclareMathOperator{\FL}{\textup{FL}}
\DeclareMathOperator{\id}{\textup{id}}
\newcommand{\boldzero}{{\mathbf{0}}}
\newcommand{\boldone}{{\mathbf{1}}}
\newcommand{\bolda}{{\mathbf{a}}}
\newcommand{\boldb}{{\mathbf{b}}}
\newcommand{\boldu}{{\mathbf{u}}}
\newcommand{\boldv}{{\mathbf{v}}}
\newcommand{\del}{\partial}
\newcommand{\eps}{\varepsilon}
\newcommand{\into}{\hookrightarrow}
\renewcommand{\to}{\longrightarrow}
\newcommand{\Mod}{{\mathrm{Mod}}}
\newcommand{\rmR}{{\mathrm{R}}}
\newcommand{\calB}{\mathcal{B}}
\newcommand{\calD}{\mathcal{D}}
\newcommand{\calE}{{\mathcal{E}}}
\newcommand{\calH}{\mathcal{H}}
\newcommand{\calK}{\mathcal{K}}
\newcommand{\calL}{\mathcal{L}}
\newcommand{\calM}{\mathcal{M}}
\newcommand{\calN}{\mathcal{N}}
\newcommand{\calO}{\mathcal{O}}
\newcommand{\calV}{\mathcal{V}}
\newcommand{\rmD}{\mathrm{D}}
\newcommand{\frakH}{\mathfrak{H}}
\renewcommand{\AA}{\mathbb{A}}
\newcommand{\CC}{\mathbb{C}}
\newcommand{\DD}{\mathbb{D}}
\newcommand{\GG}{\mathbb{G}}
\newcommand{\NN}{\mathbb{N}}
\newcommand{\RR}{\mathbb{R}}
\newcommand{\ZZ}{\mathbb{Z}}
\newcommand{\kk}{\mathbbm{k}}
\renewcommand{\bar}{\overline}
\DeclareMathOperator{\Ext}{Ext}
\DeclareMathOperator{\Hom}{Hom}
\DeclareMathOperator{\qdeg}{qdeg}
\DeclareMathOperator{\rk}{rk}
\DeclareMathOperator{\sRes}{sRes}
\DeclareMathOperator{\Tor}{Tor}
\DeclareMathOperator{\Var}{Var}
\numberwithin{equation}{subsection}
\def\cprime{$'$}
\begin{document}
\title[Gau\ss--Manin systems of Laurent polynomials]{Gau\ss--Manin
  systems of families of Laurent polynomials and $A$-hypergeometric
  systems}

\author{Thomas Reichelt}
\address{
T.~Reichelt\\
Mathematisches Institut \\
Universit\"at Heidelberg\\
Im Neuenheimer Feld 205\\
69120 Heidelberg\\
Germany}
\email{treichelt@mathi.uni-heidelberg.de}

\author{Uli Walther}
\address{ U.~Walther\\
  Purdue University\\
  Dept.\ of Mathematics\\
  150 N.\ University St.\\
  West Lafayette, IN 47907\\ USA}
\email{walther@math.purdue.edu}

\begin{abstract}In this note we study families of Gau\ss--Manin systems
  arising from Laurent polynomials with parametric coefficients under
  projection to the parameter space. For suitable matrices of exponent
  vectors, we exhibit a natural four-term exact sequence for which we
  then give an interpretation via generalized $A$-hypergeometric
  systems. We determine the extension groups from the parameter sheaf
  to the middle term of this sequence and show that the four-term
  sequence does not split. Auxiliary results include the computation
  of Ext and Tor groups of $A$-hypergeometric systems against the
  parameter sheaf.
\end{abstract}

\keywords{Gauss--Manin, toric, hypergeometric,
    Euler--Koszul, D-module, Laurent polynomial, equivariant, extension}

\subjclass{13N10,14M25,32C38,32S40,33C70}

\thanks{TR was supported by a DFG Emmy-Noether-Fellowship (RE 3567/1-1),
  UW was supported by the
  NSF under grant 1401392-DMS.
  }
\maketitle

\tableofcontents


\section*{Acknowledgememnts}
This article grew out of a discussion with Duco van Straten who raised
the question of the possible splitting of the sequence
\eqref{eq:Pshortexseq}, see Theorem \ref{thm-calcExt}. We would like
to express our sincere thanks to him and to Christian Sevenheck for
generously sharing their ideas.

\section{Introduction}

During the 1980s {Gel{\cprime}fand, Graev, Kapranov and
  Zelevinski\u\i\ introduced a class of systems of complex partial
  differential equations which are a vast generalization of the
  Gau\ss\ hypergeometric equation and which are nowadays known as
  \emph{$A$-hypergeometric} (or GKZ) \emph{systems} (\emph{cf.}
  \cite{GGZ,GKZ} and a string of other articles of that period). Such
  an $A$-hypergeometric system has a hybrid combinatorial and algebraic
  flavor, its initial datum being an integer matrix $A$ and a
  parameter vector $\beta$ in the column space of $A$. This determines
  a left ideal $H_A(\beta)$ in the Weyl algebra $D$ and the
  $A$-hypergeometric system with respect to $A$ and $\beta$ is then
  the cyclic left $D$-module $M_A^\beta := D/H_A(\beta)$. From this
  definition it is far from clear that these systems have any
  geometric interpretation.

The analytic behavior of $M_A^\beta$ (as a system of PDEs) is highly
dependent on the parameter vector $\beta$. A technique to study this
dependence, the \emph{Euler--Koszul functor}, was developed by
Matusevich, Miller and the second author in \cite{MMW05}.  This is a
functor from the category of toric modules, which are a mild
generalization of $\ZZ A$-graded $\CC[\NN A]$ modules, to the category
of complexes of $D$-modules.  The construction of this functor
generalizes the Euler--Koszul complex on the semi-group ring $\CC[\NN
  A]$ (already known to {Gel{\cprime}fand, Kapranov and
  Zelevinski\u\i, \cite{GKZ}) and was inspired by it.

The Euler Koszul complex provides a $D$-resolution of the
corresponding $A$-hypergeometric system provided $\beta$ does not lie
in the \emph{$A$-exceptional locus}, defined via the local cohomology
of $\CC[\NN A]$. An important step in the geometric
interpretation of $A$-hypergeometric systems was achieved by Schulze
and the second author in \cite{SchulzeWalther-ekdi}, generalizing work
of Gel{\cprime}fand et al. There they
showed, using the Euler--Koszul complex, that the Fourier--Laplace
transform of $M_A^\beta$ can be identifed with the direct image of
a twisted structure sheaf on a torus under a monomial map (depending
on $A$) to affine space whenever $\beta$ is outside the set of \emph{strongly
resonant parameters}.

If $A$ is homogeneous, \emph{i.e.}\ if (1,...,1) is in its row span,
then this embedding descends to an embedding of a torus of dimension
one less into projective space. It was realized by Brylinski
\cite{Brylinski} that the Fourier--Laplace transform of a $D$-module
on affine space which is constant on all punctured lines through the
origin can be expressed by a Radon transform of the corresponding
$D$-module on the projective space.

Using this Radon transform the first author showed in \cite{Reich2}
that homogeneous $A$-hypergeometric systems with not strongly
resonant, but integer parameter vector $\beta$ carry the structure of a
mixed Hodge module. Furthermore, there exists a morphism to $M_A^\beta$
from the Gau\ss--Manin system of the maximal
family of Laurent polynomials with Newton polytope equal to the convex
hull of the columns of $A$. This map 
has $\mco$-free
kernel and cokernel,
and is
compatible with the natural mixed Hodge module structure on the
Gau\ss--Manin system and on $M_A^\beta$ respectively.

Since $M_A^\beta$ is the terminal Euler--Koszul homology of the
semigroup ring $\mbc[\mbn A]$ one wonders whether the Euler--Koszul
homology of other toric modules (for example, $A$-graded ideals of
$\CC[\NN A]$) carry a natural mixed Hodge module structure as well.
In this paper we consider the maximal graded ideal of $\mbc[\mbn A]$
and prove that its terminal Euler--Koszul homology is isomorphic to the
Gau\ss--Manin system of a map whose fibers are the complement of the
fibers of the Laurent polynomial alluded to above.


We now give a short overview of the content of this article. In the
first section we review the definition of $A$-hypergeometric systems,
of the Euler--Koszul complex, and several functors on $D$-modules. In
the following section we compute the restriction of $M_A^\beta$ to the
origin, its de Rham cohomology, and the groups
$\Ext^\bullet_D(\calM_A^\beta,\mco)$. A novel feature of this article
is that we work throughout over any field of characteristic zero,
rather than specifically over $\mbc$. In this more general setting we
(re)prove that for not strongly resonant parameter the
Fourier--Laplace transformed $A$-hypergeometric system can be viewed
as the direct image of a twisted structure sheaf under a torus
embedding. In the third section we show that the long exact Euler--Koszul
homology sequences induced by the inclusion of the maximal graded
ideal in $\mbc[\mbn A]$ is isomorphic to certain Gau\ss--Manin systems
coming from a family of Laurent polynomials and compare this sequence
with the sequence obtained in \cite{Reich2}.

\subsection{$A$-hypergeometric systems}

We introduce here the main notation and review some basis facts on
$A$-hypergeometric systems and the Euler--Koszul functor. We refer to
\cite{MMW05,SchulzeWalther-ekdi} for more details.
\begin{ntn}
  Throughout, we work over the field $\kk$ of characteristic zero.

  In general we adopt the convention that we denote a sheaf by a
  calligraphic letter such as $\calM$, a module by an Italic letter such
  as $M$, and categories and functors by Roman letters such as $\textrm{M}$.
\end{ntn}

\begin{ntn}\label{ntn-A}
Throughout, $A$ will be an integer matrix that we assume to be
pointed: there should be a $\ZZ$-linear functional on the column space of
$A$ that evaluates positively on each column of $A$.

For any integer matrix $A$, let $\kk^A$ be a vector space with basis
corresponding to the columns $\{\bolda_j\}_j$ of $A$. Let $R_A$
(resp.\ $O_A$) be the polynomial ring over $\kk$ generated by the
variables $\del_A=\{\del_j\}_j$ (resp.\ $x_A=\{x_j\}_j$) corresponding
to $\{\bolda_j\}_j$; we read $R_A$ as coordinate ring on the variety
$X_A:=\kk^A$.  Further, let $D_A$ be the ring of $\kk$-linear differential
operators on $O_A$, where we identify $\frac{\del}{\del x_j}$ with
$\del_j$ so that both $R_A$ and $O_A$ are subrings of $D_A$.

For any (semi)ring of coefficients $C$ we write $CA$ for the set
of $C$-linear combinations of the columns of $A$. In particular,
$\kk A$ is a vector space.
\end{ntn}

\begin{dfn}\label{dfn-A}
Let $A$ be an integer matrix with independent rows whose $\ZZ$-ideal of
maximal minors equals $\ZZ$.

For the parameter $\beta\in\kk A$ let $H_A(\beta)$ be the
$D_A$-ideal generated by the \emph{homogeneity equations} 
\begin{eqnarray*}
\{E_i\bullet\phi
  &=&\beta_i\cdot\phi\}_i
\end{eqnarray*}
together with the \emph{toric} partial differential \emph{equations}
\begin{eqnarray*}
\{(\del^{\boldv_+}_A-\del^{\boldv_-}_A)\bullet \phi=0&\mid&
A\cdot \boldv=0\},
\end{eqnarray*}
using (throughout) multi-index notation.
Here, with $\boldzero_A=(0,\ldots,0)$ in $\kk^A$, we write $E_i:=\sum_ja_{ij}x_j\del_j$ and $\boldv_+=\max(\boldv,\boldzero_A)$,
$\boldv_-=-\min(\boldv,\boldzero_A)$.
We put
\[
M_A^\beta:=D_A/H_A(\beta).
\]
\schluss
\end{dfn}

We have
\begin{eqnarray*}
    x^\boldu E_i-E_ix^\boldu  &=& -(A\cdot \boldu)_ix^\boldu,\\
    \del^\boldu E_i-E_i\del^\boldu &=& (A\cdot \boldu)_i\del^\boldu.
\end{eqnarray*}
The \emph{$A$-degree function} (with values in $\ZZ A$) on $R_A$ and
$D_A$ is:
\[
-\deg_A(x_j):=\bolda_j=:\deg_A(\del_j). 
\]
We denote $\deg_{A,i}(-)$ the degree function associated to the weight
given by the $i$-th row of $A$.  Then $E_iP=P(E_i-\deg_{A,i}(P))P$ for
$A$-graded $P$.
Let
\[
\eps_A:=\sum_j\deg_A(\del_j)=\sum_j\bolda_j.
\]

Let $M$ be an $A$-graded $D_A$-module. 
There are
commuting $D_A$-linear endomorphisms $E_i$ via
\[
E_i\circ m:=(E_i+\deg_i(m))\cdot m.
\]
for $A$-graded $m\in M$.  In particular, if $N$ is an $A$-graded
$R_A$-module one obtains commuting sets of $D_A$-endomorphisms on the
left $D_A$-module $D_A\otimes_{R_A}N$ by
\[
E_i\circ (P\otimes Q):=(E_i+\deg_i(P)+\deg_i(Q))P\otimes Q. 
\]

The \emph{Euler--Koszul complex} $K_\bullet(N;E-\beta)$ of the
$A$-graded module $N$ is the homological Koszul complex induced by
$E-\beta:=\{(E_i-\beta_i)\circ\}_i$ on $D_A\otimes_{R_A}N$.  In
particular, the terminal module $D_A\otimes_{R_A}N$ sits in
cohomological degree zero. We denote $\calK_\bullet(N;E-\beta)$ the
corresponding complex of quasi-coherent sheaves. The cohomology
objects are $H_\bullet(N;E-\beta)$ and $\calH_\bullet(N;E-\beta)$
respectively.
If $N(\alpha)$
denotes the usual shift-of-degree functor on the category of
graded $R_A$-modules, then $K_\bullet(N;E-\beta)(\alpha)$ and
$K_\bullet(N(\alpha);E-\beta+\alpha)$ are identical.

Identifying $\ZZ A$ with $\ZZ^{\rk(A)}$ we get coordinates $\{t_i\}_i$
on $T_A=\Spec(\kk[\ZZ A])=\Spec(\kk[\{t_i^\pm\}_i])$ and then an embedding
\begin{gather}
  h_A\colon T_A\to \Spec(\CC[\{\del_j\}_j])=\kk^A
\end{gather}
induced by
the monomial morphism}
\begin{gather}
  \label{eq-monomial-map}
t:=\{t_i\}_i\to \{\prod_i t_i^{\bolda_{ij}}\}_j=:t^A
\end{gather}
The closure of the image of $h_A$ in
$X_A$ becomes a toric variety via $h_A$ and is defined by the
$R_A$-ideal $I_A$ given as the kernel of
\eqref{eq-monomial-map} and generated by all binomials
$\del_A^{\boldv_+}-\del_A^{\boldv_-}$ where $A\boldv=0$. We denote the
semigroup ring 
\[
S_A:=R_A/I_A\simeq \kk[\NN A].
\]
We denote $\widetilde{\NN A}$ the saturation of $\NN A$ and by $\tilde
S_A$ the associated semigroup ring, identical with the normalization of
$S_A$.

The \emph{faces} $\tau$ of the rational polyhedral cone $\RR_+A$,
\emph{i.e.} the subsets of (the columns of) $A$ that minimize (over
$A$) some linear functional $\ZZ A\to \ZZ$, correspond to $A$-graded
prime ideals $I_A^\tau$ of $R_A$ with
$I_A^\tau=I_A+R_A(\{\del_j\}_{j\not\in \tau})$. We let $R_A\del_A$ be
the unique $A$-graded maximal $R_A$-ideal.

An $R_A$-module $N$ is \emph{toric} if it is $A$-graded, and if it has
a (finite) $A$-graded composition chain 
\[
0=N_0\subsetneq N_1\subseteq N_2\cdots\subsetneq N_k=N
\]
such that each composition factor $N_i/N_{i-1}$ is isomorphic as
$A$-graded $R_A$-module to a face ring $R_A/I_A^\tau$ or one
of its shifts by an element of $\ZZ A$. 

For a finitely generated $A$-graded $R_A$-module
$N=\bigoplus_{\alpha\in\ZZ A}
N_\alpha$, let
\begin{eqnarray*}
\deg_A(N)&=&\{\alpha\in\ZZ A\mid N_\alpha\not=0\},\\
\qdeg_A(N)&=&\bar{\deg_A(N)}^{Zar},
\end{eqnarray*}
the latter being the Zariski closure of the former in
$\kk A=\kk\otimes_\ZZ\ZZ A$. For unions of such modules, degrees as
well as quasi-degrees are defined to be the corresponding unions, compare
\cite{SchulzeWalther-ekdi}.

Let $N=\kk(-\alpha)$ be the graded $R_A$-module whose module structure
is that of $R_A/I_A^\emptyset=R_A/R_A\del_A\simeq \kk$, and
which lives entirely inside degree $\alpha\in\ZZ^d$.
Then $K_\bullet(N;E-\beta)$ is an exact complex if $\beta\neq
\alpha$, and its differentials are zero otherwise. 

\begin{dfn}
If the row span of $A$ contains $\boldone_A$ we call $A$
\emph{homogeneous}. Homogeneity is equivalent to $I_A$ defining a
projective variety, and to the system $H_A(\beta)$ having only regular
singularities \cite{SchulzeWalther-Duke}.
\schluss\end{dfn}

\subsection{$\calD$-module functors}

Let $X$ be a smooth algebraic $\kk$-variety of dimension $d_X$. We
denote by $\calD_X$ the sheaf of algebraic differential operators and
by $D_X$ its ring of global sections. For $X = \AA^n$ we sometimes
write $D_n$ .  We denote by $\Mod(\calD_{X})$ the Abelian category of
left $\calD_{X}$-modules. The full triangulated subcategories of the
derived category $\rmD^b(\calD_X):=\rmD^b(\Mod(\calD_X))$ consisting of
objects with $\calO_{X}$-quasi-coherent (resp.\ holonomic) cohomology
are denoted by $\rmD^b_{qc}(\calD_X)$ (resp. $\rmD^b_{h}(\calD_X))$.

We recall the notation for cohomological shifting a complex
$C^\bullet$: $C^\bullet[1]$ is the complex $C^\bullet$ shifted one
step left, $(C^\bullet[1])^i=C^{i+1}$, with corresponding shift of the
morphisms. 

Let $f: X \ra Y$ be a map between smooth algebraic
varieties.
Let $\calM \in \rmD^b_{qc}(\calD_X)$ and $\calN \in
\rmD^b_{qc}(\calD_Y)$, then we denote by
\[
f_+ \calM := \rmR f_*(\calD_{Y \leftarrow X} \overset{L}{\otimes} \calM)\quad
\text{ and }\quad f^+ \calN:= \calD_{X \ra Y} \overset{L}{\otimes}
f^{-1}\calN [d_X - d_Y]
\]
the direct and inverse image functors for $\calD$-modules; both
preserve holonomicity and if $f$ is non-characteristic with respect to $\calN$ then $f^+$ is exact (up to a shift),
(see e.g. \cite[Def. 2.4.2 \& Thm 2.4.6]{HottaTakeuchiTanasaki}).
\noindent We denote by 
\begin{align}
\DD: \rmD^b_h(\calD_X) &\lra (\rmD^b_h(\calD_X))^{opp} \notag \\
\calM &\mapsto \rmR\sHom(\calM,\calD_X \otimes_{\calO_X} \Omega^{\otimes-1}_X)[d_X]\notag
\end{align}
the duality functor, which also preserves holonomicity.
We additionally define the functors
\[
f_\dag := \DD \circ f_+
\circ \DD \quad\text{ and }\quad f^\dag := \DD \circ f^+ \circ \DD.
\]

If $X$ is an affine variety, we have an equivalence of categories
\begin{align}
\Mod(\calD_X) &\lra \Mod(D_X) \notag \\
\calM &\mapsto M:= \Gamma(X,\calM)
\end{align}
where $\Mod(D_X)$ is the category of left $D_X$-modules.

\begin{dfn}\label{dfn-FL}
  Let
  \[
  \langle -,-\rangle: \AA^{\ell} \times \hat{\AA}^{\ell} \ra \AA^1,  \qquad
(\lambda_1,\ldots, \lambda_\ell, \mu_1,\ldots \mu_\ell)
  \mapsto \sum_{i=1}^\ell \lambda_i \mu_i.
  \]
  (Here, and elsewhere, $\hat{\AA}^\ell$ denotes an affine space of
  dimension $\ell$; we use the ``hat'' to keep apart source and range
  of the two functors defined in \eqref{eq-FL-dfn} below).  
  Now define two $\calD_{\AA^\ell\times\hat{\AA}^\ell}$-modules by
  \[
  \calL:= \calO_{\AA^\ell \times \hat{\AA}^\ell} e^{\langle\cdot ,\cdot
    \rangle},\qquad \overline{\calL}:= \calO_{\AA^\ell \times \hat{\AA}^\ell}
  e^{-\langle\cdot ,\cdot \rangle}.
  \]
We refer to \cite[Section 5]{KaSc} for details on these sheaves.
  Denote by $p_1: \AA^\ell \times \hat{\AA}^\ell \ra \AA^\ell$ and $p_2: \AA^\ell \times \hat{\AA}^\ell \ra \hat{\AA}^\ell$ the
  projection to the first and second factors respectively. The
  \emph{Fourier--Laplace transform} is defined by
\begin{align}\label{eq-FL-dfn}
\FL: \rmD^b_{qc}(\calD_{\AA^\ell}) &\lra \rmD^b_{qc}(\calD_{\hat{\AA}^\ell}) \notag \\
M &\mapsto p_{2+} (p_1^+ M \overset{L}\otimes \calL)
\end{align}
and
\begin{align}
\FL^{-1}: \rmD^b_{qc}(\calD_{\hat{\AA}^\ell}) &\lra \rmD^b_{qc}(\calD_{\AA^\ell}) \notag \\
M &\mapsto p_{1+} (p_2^+ M \overset{L}\otimes \overline{\calL})
\end{align}
Then $ \FL^{-1} \circ \FL(M) \simeq \iota^+ M$ where $\iota$ is given
by $\lambda \mapsto -\lambda$, and we set
\[
\hat{\calM}^\beta_A := \FL^{-1} (\calM^\beta_A)
\]
with global sections $\hat M_A^\beta$.
\schluss\end{dfn}
\begin{ntn}\label{ntn-FL-stuff}
  If $\AA^\ell$ and $\hat{\AA}^\ell$ are an $\FL$-pair with
  $\AA^\ell=\kk^A$ for some matrix $A$, we shall denote by $\hat R_A,
  \hat O_A, \hat S_A,\ldots$ the $A$-graded objects on
  $\hat{\AA}^\ell$ corresponding to the $A$-graded objects $R_A, O_A,
  S_A,\ldots$ on $\AA^\ell$.
\schluss\end{ntn}

\section{Restriction and de Rham functors of Euler--Koszul complexes}

In this section we make some computations considering certain functors
on the class of (generalized) hypergeometric systems.

\subsection{Local cohomology}
Relevant in several ways are the local cohomology functors
$H^\bullet_{\del_A}(-)$ given as the higher derived functors of the
$\del_A$-torsion functor
\[
\Gamma_{\del_A}(M):=\{m\in M\mid
\del_i^k \cdot m=0\, \forall k\gg 0, \forall i\},
\]
a subfunctor of the
identity functor on the category of $R_A$-modules. If $M$ is
$A$-graded, so are all $H^i_{\del_A}(M)$ since the support ideal is
$A$-graded. See \cite{24h} for details and background.

\begin{lem}
\label{lem-lc-tor}
For any $R_A$-module $N$ there is a
functorial isomorphism
\[
\rmR\Gamma_{\del_A}(N)[\dim(X_A)]=(D_A/\del_AD_A)\otimes^L_{R_A}N
\]
so that $H^\bullet_{\del_A}(N)=
\Tor^{R_A}_{\dim(X)-\bullet}(D_A/\del_AD_A,N)$.  Any $R_A$-grading
$\deg(-)$ on $N$ makes this isomorphism graded if the right side is
shifted by $\sum_j\deg(\del_j)$.
\end{lem}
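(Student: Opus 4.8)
The plan is to show that both sides of the asserted equality are computed by one and the same complex of flat $R_A$-modules, the only substantive point being the identification of $D_A/\del_A D_A$ with the top local cohomology of $R_A$. Everything surrounding that identification (flatness, the shift, the passage from the derived tensor to $\Tor$) is formal homological algebra, and naturality in $N$ will be automatic because the construction amounts to tensoring a fixed complex with $N$.

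First I would recall that $\rmR\Gamma_{\del_A}(-)$ is represented by the stable Koszul (\v{C}ech) complex $\check{C}^\bullet$ built on the sequence of variables $\{\del_j\}_j$, whose terms are localizations of $R_A$ and hence flat. Consequently, for every $R_A$-module $N$ one has $\rmR\Gamma_{\del_A}(N)\simeq \check{C}^\bullet\otimes_{R_A}N\simeq \check{C}^\bullet\otimes^L_{R_A}N$, with no further resolution needed. Since $\del_A$ is the maximal graded ideal of the polynomial ring $R_A$ and the number of variables $\del_j$ equals $\dim(X_A)$, the complex $\check{C}^\bullet$ has cohomology concentrated in top degree $\dim(X_A)$, equal to $H^{\dim(X_A)}_{\del_A}(R_A)$. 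Equivalently, the shift $\check{C}^\bullet[\dim(X_A)]$, a complex of flats in nonpositive cohomological degrees, is a flat resolution of the module $H^{\dim(X_A)}_{\del_A}(R_A)$ placed in degree zero.

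The crux is to identify this top local cohomology module, as a right $R_A$-module, with $D_A/\del_A D_A$ (twisted). Here I would invoke the Poincar\'e--Birkhoff--Witt decomposition $D_A=\bigoplus_b R_A\,x^b$, which exhibits $D_A$ as a free left $R_A$-module on the monomials $x^b$. Passing to the quotient by the right ideal $\del_A D_A=\sum_j\del_j D_A$ kills every basis vector carrying a positive power of some $\del_j$, so the classes of the $x^b$ form a $\kk$-basis and $D_A/\del_A D_A\cong O_A$ as $\kk$-vector spaces. The residual right $R_A$-structure is read off from the commutation relation $x^b\del_k=\del_k x^b-\tfrac{\del}{\del x_k}x^b$: modulo $\del_A D_A$ the first summand dies, so right multiplication by $\del_k$ acts as $-\tfrac{\del}{\del x_k}$. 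This is precisely the standard realization of the injective hull of $\kk$ over $R_A$, that is, of $H^{\dim(X_A)}_{\del_A}(R_A)$; matching $x^b$ with the Laurent monomial $\prod_j\del_j^{-b_j-1}$ in the \v{C}ech description (after rescaling the monomial basis to absorb scalars) gives an isomorphism of right $R_A$-modules, and comparing $\deg(x^b)$ with $\deg\!\big(\prod_j\del_j^{-b_j-1}\big)$ shows the two differ exactly by the shift $\sum_j\deg(\del_j)$, which is the asserted twist.

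Assembling these, the flat resolution of the previous paragraph yields
\[
\rmR\Gamma_{\del_A}(N)[\dim(X_A)]\;\simeq\;\check{C}^\bullet[\dim(X_A)]\otimes^L_{R_A}N\;\simeq\;(D_A/\del_A D_A)\otimes^L_{R_A}N,
\]
all isomorphisms natural in $N$; reading off cohomology degreewise turns the shift into the reindexing $H^\bullet_{\del_A}(N)=\Tor^{R_A}_{\dim(X)-\bullet}(D_A/\del_A D_A,N)$, and tracking the grading through the identification of the third paragraph produces the stated shift by $\sum_j\deg(\del_j)$. I expect the main obstacle to be exactly that middle step: pinning down the right $R_A$-module structure on $D_A/\del_A D_A$ and recognizing it as the top local cohomology $H^{\dim(X_A)}_{\del_A}(R_A)$ with the correct direction of the grading twist; once that identification is in hand, the remainder is bookkeeping.
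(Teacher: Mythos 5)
Your proof is correct and follows essentially the same route as the paper's: both represent $\rmR\Gamma_{\del_A}$ by the stable Koszul (\v Cech) complex, observe that (after the shift) it is a flat resolution of $D_A/\del_AD_A\simeq H^{\dim X_A}_{\del_A}(R_A)$, and read the grading shift $\sum_j\deg(\del_j)$ off the socle element $1/\prod_j\del_j$. The only cosmetic difference is that you compute the right $R_A$-structure on $D_A/\del_AD_A$ directly from the PBW basis and the commutator relations, whereas the paper first identifies the top local cohomology with $D_A/D_A\del_A$ as a left module and then passes to the right-module picture via the transposition anti-automorphism $x^\boldu\del^\boldv\mapsto\del^\boldv(-x)^\boldu$.
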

\begin{proof}
One representative for $\rmR\Gamma_{\del_A}(-)$ is the \v Cech
(\emph{i.e.}, stable Koszul) complex ${\check
  C}_A^\bullet(-)=(-)\otimes_{R_A}\bigotimes_j(R_A\to R_A[1/\del_j])$.
On $R_A$, this returns a $D_A$-complex with unique cohomology group,
in cohomological degree $\dim X_A$, given by
$\bigoplus_{\boldv<0}\kk\cdot \del^\boldv$ where $\boldv$ is
componentwise negative. The $D_A$-isomorphism of this module with
$D_A/D_A\del_A$ that identifies the coset of $1/\prod_j \del_j$ in
the former with the coset of $1$ in the latter (is $A$-graded of
degree $\eps_A$ and) shows that (up to this shift in degree) this is
the injective hull of
$R_A/R_A\cdot\del_A$ over $R_A$.
The anti-automorphism
induced by $x^\boldu\del^\boldv\to \del^\boldv(-x)^\boldu$ allows to
view ${\check C}_A^\bullet$ as complex of right $D_A$-modules without
affecting the $R_A$-structure.  Then $H^{\dim X_A}_{\del_A}(R_A)=
D_A/\del_AD_A$ is the canonical module of $R_A$ with its natural right
$D_A$-structure.

The modules in ${\check C}_A^\bullet$ are flat, so ${\check
  C}_A^\bullet\otimes_{R_A}N[\dim X_A]=D_A/\del_A
D_A\otimes_{R_A}^LN$.  If $N$ is graded, then---since $\del_A$ is
monomial---$D_A/\del_AD_A$ and its flat resolution ${\check
  C}_A^\bullet$ are also graded. Hence ${\check
  C}_A^\bullet\otimes_{R_A}N$ has graded cohomology. The
identification $H^{\dim X_A}({\check C}_A^\bullet)[\dim X_A]\simeq
D_A/\del_AD_A$ shifts the grading by the degree of the socle
element $1/\prod_j\del_j$ of the left hand side.
\end{proof}

\subsection{Strongly resonant parameters}

We recall from \cite{MMW05,SchulzeWalther-ekdi} the following important
sets. The \emph{exceptional locus} $\calE_A$ is 
\[
\calE_A:=\qdeg_A\left(\bigoplus_{k>\dim X_A-\dim
  T_A}\Ext^k_{R_A}(S_A,R_A)\right)=\bigcup_{k<\dim T_A} \left(\bar{\deg_A
H^k_{\del_A}(S_A)}^{Zar}\right).
\]
A larger
interesting set is 
\[
\sRes(A):=\bigcup_j\qdeg_A(H^1_{\del_j}(S_A)), 
\]
the \emph{strongly resonant} parameters of $A$.

For $\kk=\CC$ the following results were shown in
\cite{MMW05,SchulzeWalther-ekdi}.  A parameter is
in $\calE_A$ if and only if the complex $\calK_\bullet(S_A;E-\beta)$
fails to be a resolution of $\calM_A^\beta$; it is in $\sRes(A)$ if
and only if $\calK_\bullet(S_A;E-\beta)$ fails to resolve the
Fourier--Laplace transform of $h_{A+}(\calO_{T_A}^\beta)$ where
\[
\calO_{T_A}^\beta=\calD_{T_A}/\calD_{T_A}(\{\del_{t_i}t_i+\beta_i\}_i),
\]
or
alternatively if and only if $h_{A+}(\calO_{T_A}^\beta)$ disagrees with
$\hat\calM_A^\beta$.  We are interested in these results over $\kk$:

\begin{thm}
Let $\kk$ be an arbitrary field of characteristic zero. For each $j$,
the following are equivalent:
\begin{enumerate}
\item $\beta\not\in\sRes_j(A):=\qdeg_A(H^1_{\del_j}(S_A))$;
\item left-multiplication by $\del_{x_j}$ is a quasi-isomorphism on
  $K_\bullet(E-\beta;S_A)$. 
\end{enumerate}
\end{thm}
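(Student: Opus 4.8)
The plan is to read assertion (2) homologically. Left multiplication by $\del_j$ will be a chain endomorphism of $K_\bullet(S_A;E-\beta)$, and it is a quasi-isomorphism exactly when its mapping cone is acyclic; I would then identify this cone, up to a shift, with the Euler--Koszul complex $K_\bullet(H^1_{\del_j}(S_A);E-\beta)$ and finish by invoking the quasidegree-detection property of Euler--Koszul homology.

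First I would verify that left multiplication by $\del_j$ is a chain map. Since $\deg_A(\del_j)=\bolda_j$ and the commutator relation recorded after Definition~\ref{dfn-A} gives $\del_j E_i=(E_i+a_{ij})\del_j$, for $A$-graded $m$ one computes $\del_j\cdot(E_i\circ m)=(E_i+a_{ij}+\deg_i(m))\del_j m=E_i\circ(\del_j m)$. Hence left multiplication by $\del_j$ commutes with every $E_i-\beta_i$, and so with the Euler--Koszul differential, raising $A$-degree by $\bolda_j$. Therefore (2) holds if and only if the mapping cone of this endomorphism is acyclic.

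Next I would compute that cone. Writing $C=K_\bullet(S_A;E-\beta)$, the cone is the total complex of $[\,C\xrightarrow{\ \del_j\ }C\,]$, which equals $(D_A/\del_jD_A)\overset{L}{\otimes}_{D_A}C$, where $D_A/\del_jD_A$ is the right $D_A$-module obtained from the (injective, as $D_A$ is a domain) left multiplication by $\del_j$. Because each Koszul degree of $C$ is $D_A\otimes_{R_A}S_A$ and $D_A$ is $R_A$-flat, associativity of the derived tensor product yields $(D_A/\del_jD_A)\overset{L}{\otimes}_{D_A}C\simeq (D_A/\del_jD_A)\overset{L}{\otimes}_{R_A}S_A$, with the Euler--Koszul differentials carried along. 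Now the single-variable analogue of Lemma~\ref{lem-lc-tor} --- proved by the same argument applied to the two-term \v Cech complex $R_A\to R_A[1/\del_j]$ together with the identification of $H^1_{\del_j}(R_A)$ with $D_A/\del_jD_A$ as right $D_A$-modules --- identifies $(D_A/\del_jD_A)\overset{L}{\otimes}_{R_A}S_A$ with $\rmR\Gamma_{\del_j}(S_A)$, up to the homological shift $[1]$ and a shift by $\bolda_j$ in the $A$-grading. As $S_A$ is a domain, $H^0_{\del_j}(S_A)=0$, so this object is quasi-isomorphic to $H^1_{\del_j}(S_A)$ placed in a single degree; applying the exact functor $D_A\otimes_{R_A}-$ then shows that the cone is, up to a shift, $K_\bullet(H^1_{\del_j}(S_A);E-\beta)$.

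It remains to prove that $K_\bullet(H^1_{\del_j}(S_A);E-\beta)$ is acyclic if and only if $\beta\notin\qdeg_A(H^1_{\del_j}(S_A))=\sRes_j(A)$, which then chains together to give (1)$\Leftrightarrow$(2). This is the quasidegree-detection property of Euler--Koszul homology: the vanishing direction is the acyclicity lemma (if $\beta$ lies outside the quasidegrees, the Koszul complex on the $E_i-\beta_i$ is exact), while the converse rests on $H^1_{\del_j}(S_A)$ being a filtered union of shifts of toric modules, so that $H_0$ already detects its quasidegrees. I expect the main obstacle to be precisely this input: the characterization was proved over $\CC$ in \cite{MMW05,SchulzeWalther-ekdi}, and here it must be re-established over an arbitrary field $\kk$ of characteristic zero and for the non-finitely-generated local cohomology module $H^1_{\del_j}(S_A)$. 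Once that is secured, the cone computation above is formal and yields the theorem.
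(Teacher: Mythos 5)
Your reduction of condition (2) to the acyclicity of a mapping cone is a reasonable start, but the identification of that cone with (a shift of) $K_\bullet(H^1_{\del_j}(S_A);E-\beta)$ is false, and this is the load-bearing step of your argument. Two things go wrong. First, the ``single-variable analogue'' of Lemma \ref{lem-lc-tor} does not hold: as a right $R_A$-module, $D_A/\del_jD_A$ is isomorphic to $\kk[x_{A\setminus j}]\otimes_\kk H^1_{\del_j}(R_A)$, not to $H^1_{\del_j}(R_A)$ --- the lemma's clean identification uses all of $\del_A$ at once, so that no spectator polynomial factor survives. Second, even setting that aside, your final move ``applying the exact functor $D_A\otimes_{R_A}-$'' is not legitimate: the cone has already been computed as $(D_A/\del_jD_A)\otimes^L_{R_A}S_A$, and tensoring that answer once more with $D_A$ over $R_A$ produces a genuinely different complex, namely the Euler--Koszul complex of $H^1_{\del_j}(S_A)$. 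A minimal counterexample: take $A=(1)$, so $S_A=\kk[\del_1]$, and $\beta=-1\in\sRes_1(A)$. The cone of $\del_1\cdot$ on $K_\bullet(S_A;E+1)$ has homology $\ker(\del_1|_{M_A^{-1}})\simeq\kk$ and $\mathrm{coker}(\del_1|_{M_A^{-1}})\simeq\kk$, total dimension two; but $K_\bullet(H^1_{\del_1}(S_A);E+1)$ is quasi-isomorphic to $K_\bullet(\kk(1);E+1)$, whose differential vanishes, so its homology is $O_A\oplus O_A$, infinite-dimensional. The two complexes do become acyclic for the same set of $\beta$ --- which is why the theorem is true --- but they are not quasi-isomorphic, so your chain of identifications does not establish the equivalence. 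The correct mechanism (and the one in \cite{SchulzeWalther-ekdi}) routes through the localization triangle $K_\bullet(S_A;E-\beta)\to K_\bullet(S_A[\del_j^{-1}];E-\beta)\to K_\bullet(H^1_{\del_j}(S_A);E-\beta)\xrightarrow{+1}$, together with the separate fact that $\del_j$ acts invertibly on $D_A\otimes_{R_A}S_A[\del_j^{-1}]$; these combine to show that $\del_j$ is a quasi-isomorphism on $K_\bullet(S_A;E-\beta)$ if and only if $K_\bullet(H^1_{\del_j}(S_A);E-\beta)$ is acyclic, without ever asserting that the cone \emph{is} that complex.

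The second, larger issue is that what you defer as ``the main obstacle'' --- re-establishing the quasidegree-detection property of \cite[Prop.~5.3]{MMW05} over an arbitrary field $\kk$ of characteristic zero --- is not an afterthought here: it is the entire content of the paper's proof. The paper does not re-derive the $\CC$-statement at all; it traces the exact logical dependency graph of \cite[Cor.~3.7]{SchulzeWalther-ekdi} (through Thm.~3.5, Cor.~3.1, Lem.~3.2, Prop.~2.1 there, and \cite[Prop.~5.3, Lem.~4.9]{MMW05}) and checks each node for field-independence: formal homological algebra transfers verbatim, $K_A^\kk(D_A^\kk;E-\beta)\otimes_\kk\CC=K_A^\CC(D_A^\CC;E-\beta)$ handles base change, the key linear-algebra equivalence in \cite[Lem.~4.9]{MMW05} is checked in one direction directly and in the other by contradiction after extending scalars to $\CC$, and the holonomicity/duality input is supplied over general fields by Roos's theorem. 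So even after repairing the cone step, your proposal would still leave the actual assertion of the theorem --- validity over $\kk$ rather than $\CC$ --- unproven.
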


\begin{cor}\label{cor:SW}
Over any coefficient field $\kk$ of characteristic zero, the following
are equivalent:
\begin{enumerate}
\item $\beta\not\in\sRes(A)$;
\item $\calK_\bullet(E-\beta;S_A)$ represents the Fourier--Laplace transform of
  $h_{A+}\calO_{T_A}^\beta$
\item $\calM_A^\beta$ is naturally isomorphic to the Fourier--Laplace transform
  of $\calH^0 h_{A+}\calO_{T_A}^\beta$ 
\end{enumerate}
\end{cor}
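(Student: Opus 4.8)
The plan is to read off the three equivalences from the Theorem by transporting its per-coordinate invertibility statement across the Fourier--Laplace transform. By the definition $\sRes(A)=\bigcup_j\sRes_j(A)$, statement (1) holds exactly when $\beta\notin\sRes_j(A)$ for every $j$, and by the Theorem this is equivalent to left multiplication by $\del_{x_j}$ being a quasi-isomorphism on $\calK_\bullet(E-\beta;S_A)$ for all $j$ (I pass freely between modules and sheaves since $X_A$ is affine). Applying $\FL^{-1}$ carries everything onto $X_A=\kk^A$, where the operator $\del_{x_j}$ becomes multiplication by the coordinate $\del_j$, so the simultaneous invertibility condition becomes the assertion that the restriction morphism
\[
\FL^{-1}\calK_\bullet(E-\beta;S_A)\lra \FL^{-1}\calK_\bullet(E-\beta;S_A)\bigl[(\textstyle\prod_j\del_j)^{-1}\bigr]
\]
to the open torus $\{\prod_j\del_j\neq0\}$ is a quasi-isomorphism.

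The step I expect to be the main obstacle is the identification of the right-hand side with $h_{A+}\calO_{T_A}^\beta$ over an arbitrary field $\kk$ of characteristic zero; this is exactly where I must reprove the relevant result of \cite{SchulzeWalther-ekdi} algebraically, forgoing the analytic arguments available only for $\kk=\CC$. The ingredients I would assemble are: on the open torus the localized semigroup ring $S_A[(\prod_j\del_j)^{-1}]$ equals $\kk[\ZZ A]$ and $h_A$ restricts to an isomorphism onto the dense orbit of $\Spec S_A$; under this identification the Euler operators $E_i-\beta_i$ become the twisting $\{\del_{t_i}t_i+\beta_i\}_i$ that defines $\calO_{T_A}^\beta$; and, because the Euler--Koszul complex of $\kk[\ZZ A]$ is a resolution on the smooth torus, the higher Euler--Koszul homology is killed by the localization, so the localized complex is concentrated in degree zero and equals $h_{A+}\calO_{T_A}^\beta$. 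Transporting back by $\FL$ then turns the displayed quasi-isomorphism into the statement that $\calK_\bullet(E-\beta;S_A)$ represents $\FL(h_{A+}\calO_{T_A}^\beta)$, establishing (1)$\Leftrightarrow$(2).

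Finally I would fold in (3). For (1)$\Rightarrow$(3) I use $\calE_A\subseteq\sRes(A)$: under (1) the complex $\calK_\bullet(E-\beta;S_A)$ is an honest resolution of $\calM_A^\beta$, concentrated in degree zero, so taking $\calH^0$ in (2)---using that $\FL$ is exact and commutes with $\calH^0$ and that the terminal Euler--Koszul homology is $\calM_A^\beta$---gives $\calM_A^\beta\simeq\FL(\calH^0 h_{A+}\calO_{T_A}^\beta)$. For (3)$\Rightarrow$(1) I argue by contraposition, showing that $\beta\in\sRes(A)$ forces the restriction of $\hat\calM_A^\beta=\FL^{-1}\calM_A^\beta$ to the open torus to fail to be an isomorphism, so that $\calM_A^\beta\not\simeq\FL(\calH^0 h_{A+}\calO_{T_A}^\beta)$: when $\beta\in\sRes(A)\smallsetminus\calE_A$ the complex still resolves $\calM_A^\beta$, so the Theorem's failure of $\del_{x_j}$-invertibility is a failure directly on $\calM_A^\beta$; and when $\beta\in\calE_A$ the module $\calM_A^\beta$ has a nonzero submodule supported off the torus---the rank-jump phenomenon at exceptional parameters, visible through $H^\bullet_{\del_A}(S_A)$ as in Lemma~\ref{lem-lc-tor}---which restriction annihilates. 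This yields (1)$\Leftrightarrow$(2), (1)$\Rightarrow$(3) and (3)$\Rightarrow$(1), hence the asserted equivalence of all three conditions.
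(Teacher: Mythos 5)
Your overall strategy is quite different from the paper's, and the difference matters. The paper does not re-derive these equivalences at all: for $\kk=\CC$ they are results of \cite{MMW05,SchulzeWalther-ekdi}, and the entire content of the Corollary is its validity over an arbitrary coefficient field of characteristic zero. Accordingly, the paper's ``proof'' is an audit of the logical dependencies of the $\CC$-proof, checking one by one that each ingredient is either field-independent or can be transported by base change $\kk\into\CC$. Your proposal instead reconstructs the $\CC$-argument directly, and it glosses over exactly the points where the field-dependence lives. The pivotal assertion that the localized complex is concentrated in degree zero --- equivalently, that the Euler--Koszul complex of the maximal Cohen--Macaulay module $\kk[\ZZ A]$ has no higher homology --- is justified by you with ``because the Euler--Koszul complex of $\kk[\ZZ A]$ is a resolution on the smooth torus,'' which assumes the very point at issue. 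Over $\CC$ this vanishing comes from the duality spectral sequence of \cite[Thm.~6.3]{MMW05}, whose use requires that the holonomic dual of a holonomic module is again a module; making this available over $\kk$ (good filtrations, Roos's theorem) is one of the main things the paper's proof actually does. The same applies to your implicit uses of \cite[Prop.~5.3]{MMW05} (the dictionary between quasi-degrees and vanishing of Euler--Koszul homology, needed to pass between $\beta\in\sRes_j(A)$ and non-invertibility of $\del_j$) and of exactness of affine direct images over $\kk$, both of which the paper handles by explicit base-change arguments.

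There is also a mathematical misstep in your (3)$\Rightarrow$(1). In the case $\beta\in\calE_A$ you invoke ``a nonzero submodule of $\calM_A^\beta$ supported off the torus'' coming from rank-jumping; rank-jumping enlarges the characteristic cycle but does not in general produce a torsion submodule of $\calM_A^\beta$. The case split is in any event unnecessary: the Euler--Koszul homology long exact sequence of $0\to S_A\to S_A[\del_j^{-1}]\to H^1_{\del_j}(S_A)\to 0$ ends with $H_0(S_A;E-\beta)\to H_0(S_A;E-\beta)[\del_j^{-1}]\to H_0(H^1_{\del_j}(S_A);E-\beta)\to 0$, so $\beta\in\sRes_j(A)$ already forces $\del_j$ to be non-invertible on $M_A^\beta$ itself, with no reference to exceptionality --- but this again rests on \cite[Prop.~5.3]{MMW05} over $\kk$. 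In short, your skeleton is essentially the correct argument over $\CC$, but as a proof of the stated Corollary it leaves the actual claim --- independence of the coefficient field --- unaddressed.
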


Inspection shows that, apart from formal computations that do not
depend on $\kk$, there are the following logical dependencies in
\cite{SchulzeWalther-ekdi}.
\begin{itemize}
\item \cite[Cor.~3.7]{SchulzeWalther-ekdi} needs \cite[Thm.~3.5,
  Cor.~3.1,Prop.~2.1]{SchulzeWalther-ekdi} and
  \cite[Prop.~5.3]{MMW05}, and the fact that higher Euler--Koszul
  homology is $(\prod_j\del_j)$-torsion.
\item \cite[Thm.~3.5]{SchulzeWalther-ekdi} needs
  \cite[Lem.~3.2]{SchulzeWalther-ekdi} and
  \cite[Prop.~5.3]{MMW05}. 
\item  \cite[Lem.~3.2]{SchulzeWalther-ekdi} is completely formal and
  independent of the field $\kk$.
\item \cite[Cor.~3.1]{SchulzeWalther-ekdi} needs the left and right
  \O re properties of $D_{A}$, and
  \cite[Prop.~2.1]{SchulzeWalther-ekdi}.
\item \cite[Prop.~2.1]{SchulzeWalther-ekdi} needs that direct images
  over $\kk$ are formally the same for all $\kk$ (which they are),
  plus $\calD$-affinity of tori, plus various formal computations
  contained in \cite{Borel}, namely an identification of a direct
  image module in VI.7.3, the chain rule in VI.4.1, exactness of
  direct images for affine closed embeddings in VI.8.1, and equality of direct
  images under open embeddings in the $\calD$- and $\calO$-category in
  VI.5.2.
\end{itemize}

Tori are $\calD$-affine since they are $\calO$-affine. Higher
Euler--Koszul homology is $(\prod_j\del_j)$-torsion since localizing
every $\del_{j}$ leads to Euler--Koszul homology of the quasi-toric
Cohen--Macaulay module $\kk[\ZZ A]$ (compare
\cite{SchulzeWalther-ekdi} for quasi-toricity). The \O re properties
of $D_A$ rely on the Leibniz rule and are unaffected by $\kk$. Any
closed embedding over $\kk$ can be base-changed to a closed embedding
(and hence to an affine faithful map) over $\CC$, by viewing $\kk$
(algebraically) as a subfield of $\CC$. Since $\CC$ is fully faithful
over $\kk$ and affine faithful maps over $\CC$ yield exact direct
image functors, so do they over $\kk$. Direct images for open
embeddings agree over $\calD$ and $\calO$ more or less by definition.
It therefore remains to inspect \cite[Prop.~5.3]{MMW05} and exactness
of the Euler--Koszul complex on maximal Cohen--Macaulay input over
$\kk$.

Using superscripts to indicate base fields, 
$K_A^\kk(D_A^\kk;E-\beta)\otimes_\kk\CC = K_A^\CC(D_A^\CC;E-\beta)$ as
long as $\beta\in\kk A$. The notion of a toric module
is formally independent of $\kk$, and so the categories of toric modules
and their Euler--Koszul complexes embed into one another for
containments of fields. In particular, the formal mechanisms are
identical and scale from one field to another faithfully.

The required part of \cite[Prop.~5.3]{MMW05} is the equivalence
(3)$\Leftrightarrow$(4). The proof passes through the equivalences
(2)$\Leftrightarrow$(3) and (2)$\Leftrightarrow$(4). For both we
need, modulo formal computations involving toric composition chains,
only to check the equivalence of conditions (1) and (3) in
\cite[Lem.~4.9]{MMW05}.  The implication (1)$\Rightarrow$(3) is linear
algebra over any field. The reverse follows by contradiction from base
change to $\CC$.

Finally, if $M$ is a maximal Cohen--Macaulay toric module over $\kk$
then vanishing of higher Euler--Koszul homology follows like over
$\CC$ from the spectral sequence \cite[Thm.~6.3]{MMW05} since the
existence of the spectral sequence is abstract homological
nonsense. However, this use of the spectral sequence requires the
concept of holonomicity: one would like to use that Euler--Koszul
homology modules are holonomic and that therefore their duals are modules.

The Euler--Koszul homology modules induce $\calD_A$-modules on affine
space. On that class, (dimension, and hence) holonomicity can be
defined over all fields, via the theory of good filtrations. That
holonomic modules have holonomic modules as their duals was proved by Roos, see
the Bernstein notes \cite[Thm.~3.15]{Bernstein-notes}.

\subsection{Restriction to the origin}

Let $\rho$ be the restriction functor to $\boldzero_A\in\kk^A$,
\[
\rho(-):=(D_A/x_AD_A)\otimes^L_{R_A}(-)
\]
from the category of
($A$-graded) $\calD_A$- or $D_A$-modules to the category of ($A$-graded)
$\kk$-vector spaces. Denote $\rho_k(-)$ its $k$-th homology.

We start with a topological observation derived from
\cite{SchulzeWalther-ekdi}. By $H_{dR}^\bullet(-;\kk)$ we mean the
algebraic de Rham cohomology in the sense of Grothendieck
\cite{Grothendieck-dR}.
\begin{lem}\label{lem-restr-sRes}
If $\beta\not\in\sRes(A)$ then $\rho(\calM_A^\beta)$ is
naturally identified with the homology of the local system to
$\calO_{T_A}^\beta$ on the torus $T_A$:
\[
H_j(\rho(\calM_A^\beta))=\rho_j(\calM_A^\beta)\simeq \left\{\begin{array}{cr}
H_{dR}^j (T_A;\kk)&\text{if } \beta\in\ZZ A\smallsetminus \sRes(A);\\
0&\text{if }\beta\in\kk A\smallsetminus(\ZZ A\cup\sRes(A)).
\end{array}\right.
\]
\end{lem}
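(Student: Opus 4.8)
The plan is to identify $\rho(\calM_A^\beta)$, by way of the Fourier--Laplace transform, with the algebraic de Rham cohomology of the rank-one connection $\calO_{T_A}^\beta$ on the torus $T_A$, and then to compute the latter by an elementary Künneth argument. The starting point is Corollary \ref{cor:SW}: since $\beta\notin\sRes(A)$, the transformed module $\hat\calM_A^\beta=\FL^{-1}(\calM_A^\beta)$ is naturally isomorphic to $h_{A+}\calO_{T_A}^\beta$ in $\rmD^b_h$, so that $\calM_A^\beta\simeq\FL(h_{A+}\calO_{T_A}^\beta)$ up to the sign involution $\iota\colon\lambda\mapsto-\lambda$. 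As $\iota$ fixes the origin $\boldzero_A$, it is harmless for a functor that only sees the behaviour at $\boldzero_A$, and may be ignored throughout.

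Next I would show that, under $\FL$, restriction to the origin is exchanged with the de Rham (pushforward--to--a--point) functor. Writing $\FL$ as in Definition \ref{dfn-FL}, namely $\calM_A^\beta\simeq p_{2+}(p_1^+\hat\calM_A^\beta\overset{L}\otimes\calL)$, I would restrict along $i_{\boldzero}\colon\{\boldzero_A\}\hookrightarrow\kk^A$ and apply base change in the Cartesian square obtained by pulling $p_2$ back along $i_{\boldzero}$. The key simplification is that $\calL=\calO e^{\langle\cdot,\cdot\rangle}$ restricts over $\{\boldzero_A\}$ to the trivial module $\calO$, because $e^{\langle\cdot,\boldzero_A\rangle}=1$; combined with the functoriality $p_+\circ h_{A+}=(p\circ h_A)_+=a_+$, where $a\colon T_A\to\{\mathrm{pt}\}$ is the constant map, this yields a natural isomorphism $\rho(\calM_A^\beta)\simeq a_+\calO_{T_A}^\beta$ up to a fixed cohomological shift. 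In other words, $\rho(\calM_A^\beta)$ is the algebraic de Rham cohomology $\DR$ of the connection $\calO_{T_A}^\beta$ on $T_A$. I would track the shifts built into $i_{\boldzero}^+$, $p^+$, and $p_+$ so that the homological index $j$ of $\rho_j$ matches the de Rham degree $j$ appearing in the statement.

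Finally I would compute $a_+\calO_{T_A}^\beta$ directly. In coordinates $t_1,\dots,t_d$ on $T_A=(\kk^\ast)^d$, the module $\calO_{T_A}^\beta=\calD_{T_A}/\calD_{T_A}(\{\del_{t_i}t_i+\beta_i\}_i)$ is the rank-one connection whose $i$-th factor is $\nabla_i=d+\beta_i\,\tfrac{dt_i}{t_i}$ (up to an integral shift of the exponent that does not affect what follows), and its de Rham complex is the tensor product over $i$ of the one-term complexes $[\kk[t_i^{\pm}]\xrightarrow{\nabla_i}\kk[t_i^{\pm}]\tfrac{dt_i}{t_i}]$. On the monomial basis $t_i^n$ the map $\nabla_i$ is multiplication by $n+\beta_i$, hence bijective exactly when $\beta_i\notin\ZZ$, and otherwise with one-dimensional kernel and cokernel. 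By the Künneth formula the total cohomology vanishes as soon as some $\beta_i\notin\ZZ$, that is for $\beta\in\kk A\smallsetminus(\ZZ A\cup\sRes(A))$, and equals $\bigwedge^\bullet\kk^d=H_{dR}^\bullet(T_A;\kk)$ when all $\beta_i\in\ZZ$, that is for $\beta\in\ZZ A\smallsetminus\sRes(A)$. This produces precisely the two cases of the statement, and the computation is manifestly independent of $\kk$ since it only invokes Grothendieck's algebraic de Rham cohomology.

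I expect the main obstacle to lie in the second step: making the slogan ``restriction to the origin equals de Rham of the Fourier--Laplace transform'' precise and natural over an arbitrary field of characteristic zero. Concretely, this means checking that the functor $\rho=(D_A/x_AD_A)\otimes^L_{R_A}(-)$ really implements $i_{\boldzero}^+$ after the Fourier--Laplace identification, controlling the various shifts so that degrees line up, and verifying that base change applies, which uses holonomicity of $\hat\calM_A^\beta$ and the triviality $\calL|_{\boldzero_A}\simeq\calO$. Once this intertwining is established, the remaining steps are formal or elementary.
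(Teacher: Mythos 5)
Your proposal is correct and follows essentially the same route as the paper: invoke Corollary \ref{cor:SW} to identify $\FL^{-1}(\calM_A^\beta)$ with $h_{A+}\calO_{T_A}^\beta$, observe that Fourier--Laplace intertwines restriction to the origin with the direct image to a point (the paper states this as the algebraic fact that $\rho$ becomes $(D_A/\del_AD_A)\otimes^L_{D_A}(-)$, while you derive it via base change against the kernel $\calL$ --- the same content), and conclude that $\rho(\calM_A^\beta)$ computes the de Rham cohomology of the local system. Your explicit K\"unneth computation of $a_+\calO_{T_A}^\beta$ simply fills in what the paper leaves implicit in the phrase ``the cohomology of the local system.''
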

\begin{proof}
By \cite{SchulzeWalther-ekdi} and \ref{cor:SW}, if
$\beta\not\in\sRes(A)$ then $\FL^{-1}(\calM_A^\beta)\simeq
h_{A+}(\calO_{T_A}^\beta)$.  Under Fourier--Laplace, restriction
$\rho$ converts to the functor $(D_A/\del_AD_A)\otimes^L_{D_A}(-)$. On
the affine space $X_A=\kk^A$ this is the $D$-module direct image under
the map to a point. Hence with $\beta\not\in\sRes(A)$,
$\rho(\calM_A^\beta)$ represents the direct image of $\calO_{T_A}^\beta$
under projection to a point---in other words, the cohomology of the
local system.
\end{proof}

We next extend this lemma by identifying algebraically
$\rho(M_A^\beta)$ with $\bigwedge(\kk A)$ for non-exceptional
$\beta$. (We view the exterior algebra as an abstract copy of the
cohomology of $T_A$). Note that in this case the Euler--Koszul complex resolves
$\calM_A^\beta$ but is not necessarily a representative for
$\FL^{-1} h_{A+}(\calO_{T_A}^\beta)$. Studying restrictions of Euler--Koszul
complexes turns out to be very down to earth.

\begin{lem}\label{lem-restr-EK}
If $\phi\colon N\to N'$ is an $A$-graded morphism (of degree zero) of
$A$-graded $R_A$-modules then 
\begin{asparaenum}
\item 
the restriction $\rho(\calK_\bullet(N;E-\beta))$ is naturally
$\bigwedge (\kk A)\otimes_\kk N_\beta$, in the sense that
\item 
the induced morphism
$\rho(\calK_\bullet(N;E-\beta))\to \rho(\calK_\bullet(N';E-\beta))$ is
identified with the morphism $\bigwedge(\kk A)\otimes_\kk N_\beta \to
\bigwedge (\kk A)\otimes_\kk N'_\beta$ induced from $\phi_\beta$.
\end{asparaenum}
\end{lem}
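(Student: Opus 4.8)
The plan is to compute $\rho(\calK_\bullet(N;E-\beta))$ as the homology of a bounded double complex, running the spectral sequence that takes restriction homology first. Resolving the right $D_A$-module $D_A/x_AD_A$ by the Koszul complex of right multiplication by the commuting regular sequence $x_1,\dots,x_\ell$, the functor $\rho$ is computed by the Koszul complex $K^{\mathrm{Kos}}(x_A;-)$ of \emph{left} multiplication by $x_1,\dots,x_\ell$ (this is the affine incarnation of the restriction used in the proof of Lemma~\ref{lem-restr-sRes}). Applying this termwise to the Euler--Koszul complex, whose $p$-th term is $\bigwedge^p(\kk A)\otimes_\kk(D_A\otimes_{R_A}N)$, produces a double complex in which one direction carries the Euler--Koszul differential built from the operators $(E_i-\beta_i)\circ$ and the other carries the restriction differential built from the $x_j$.

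First I would restrict a single Euler--Koszul term. By normal ordering (writing every operator with the $x$'s to the left of the $\del$'s) one has a natural isomorphism $D_A\otimes_{R_A}N\cong O_A\otimes_\kk N$ of left $O_A$-modules, under which left multiplication by $x_j$ is ordinary multiplication on the $O_A$-factor; in particular the module is $O_A$-free. Hence $K^{\mathrm{Kos}}(x_A;D_A\otimes_{R_A}N)$ has homology $N$ concentrated in homological degree $0$. Consequently the restriction-first spectral sequence collapses onto the single row $q=0$, whose terms are $\bigwedge^p(\kk A)\otimes_\kk N$ and whose differential is the one induced by $(E-\beta)$ on $H_0$ of the restriction; the total homology is therefore the homology of this induced complex.

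The heart of the argument is the identification of that induced differential. For homogeneous $n\in N_\gamma$, the $\circ$-action together with the commutation relation $\del_jx^a=x^a\del_j+a_jx^{a-e_j}$ gives
\[
(E_i-\beta_i)\circ(x^a\otimes n)=\sum_j a_{ij}\,x^{a+e_j}\otimes\del_j n+(\gamma_i-\beta_i)\,x^a\otimes n,
\]
where the $(Aa)_i$-terms cancel against the degree shift $\deg_{A,i}(x^a)=-(Aa)_i$. Taking $a=\boldzero_A$ and projecting modulo $x_A\cdot(O_A\otimes_\kk N)$ kills the first sum and leaves multiplication by the scalar $(\gamma_i-\beta_i)$. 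Thus on $H_0=N=\bigoplus_\gamma N_\gamma$ the induced complex is the direct sum over $\gamma$ of the Koszul complex of the scalar tuple $(\gamma-\beta)\in\kk A$ acting on the vector space $N_\gamma$. Exactly as in the computation recorded above for $N=\kk(-\alpha)$, this Koszul complex is acyclic whenever $\gamma\neq\beta$ and has zero differential (hence homology $\bigwedge^\bullet(\kk A)\otimes_\kk N_\gamma$) when $\gamma=\beta$. Only the summand $\gamma=\beta$ survives, yielding the identification $\rho(\calK_\bullet(N;E-\beta))\cong\bigwedge(\kk A)\otimes_\kk N_\beta$ with vanishing differentials, which is assertion (1).

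For assertion (2) I would observe that every step above is functorial in $N$: the Euler--Koszul complex is a functor on degree-zero morphisms, $\rho$ and the Koszul constructions are functorial, the normal-ordering isomorphism $D_A\otimes_{R_A}N\cong O_A\otimes_\kk N$ is natural, and passage to the degree-$\beta$ component is natural. Hence a degree-zero morphism $\phi\colon N\to N'$ induces on the two identifications exactly $\bigwedge(\kk A)\otimes_\kk\phi_\beta$. The main obstacle is the bookkeeping in the displayed computation of the induced differential: keeping the $\circ$-action, the $A$-graded shift $-(Aa)_i$, and the reduction modulo $x_A$ straight. Once the scalar $(\gamma_i-\beta_i)$ is isolated, the spectral-sequence collapse and the reduction to Koszul complexes of scalars are routine.
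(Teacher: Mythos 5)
Your argument is correct and is in essence the paper's own proof: both reduce to the observation that after killing $x_A$ the operator $(E_i-\beta_i)\circ$ acts on the degree-$\gamma$ component of $N$ as the scalar $\gamma_i-\beta_i$, so the restricted Euler--Koszul complex splits as a direct sum over $\gamma\in\ZZ A$ of Koszul complexes of scalar tuples, exact unless $\gamma=\beta$ and with zero differential there. The only packaging difference is that you compute $\rho$ termwise via the Koszul resolution of $D_A/x_AD_A$ and a double-complex spectral sequence, whereas the paper commutes $\rho$ past $\calK_\bullet$ and notes that $(D_A/x_AD_A)\otimes_{R_A}N=N$ already underived (since $D_A/x_AD_A$ is free as a right $R_A$-module), which makes the spectral sequence unnecessary.
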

\begin{proof}
We extend the domain of the Euler--Koszul functor to modules of the
form $Q\otimes_{R_A}N$ where $N$ is an $A$-graded $R_A$-module and $Q$
a right $A$-graded $D_A$-module by setting $E_i\circ(q\otimes
\nu)=q(E_i+\deg_{A,i}(\nu))\otimes \nu$.

Morally, $E_i\circ(-)$ remains right-multiplication by $E_i$ and
(since multiplications on the left and right commute) one easily
checks that there is an isomorphism of functors
$\rho(\calK_\bullet(-;E-\beta))=\calK_\bullet(\rho(D_A\otimes_{R_A}(-));E-\beta)=
\calK_\bullet((D_A/x_AD_A)\otimes^L_{R_A}(-);E-\beta)$ from the category of
$A$-graded $R_A$-modules  to the category of $A$-graded vector
spaces.

As right $R_A$-module, $(D_A/x_AD_A)\otimes_{R_A}N=N$ for any
$A$-graded $N$. Hence
$(D_A/x_AD_A)\otimes^L_{R_A}(-)=(D_A/x_AD_A)\otimes_{R_A}(-)$.  The
$E_i$-action is then $E_i\circ(1\otimes \nu)=(\deg_{A,i}(\nu))\otimes
\nu$.  In particular, the Euler--Koszul complex of $E-\beta$ on
$(D_A/x_AD_A)\otimes N$ is in degree $\alpha\in\ZZ A$ the Koszul
complex on $N_\alpha$ induced by the numbers
$\{\alpha_i-\beta_i\}_i$. If $\alpha=\beta$ then this Koszul complex
is $\bigwedge(\kk A)\otimes_\kk N_\alpha$ with zero differential. If
$\alpha\not=\beta$ then this Koszul complex is the Koszul complex
(over $\ZZ$) of a set of generators of the unit ideal and hence exact.

The final claim is clear from the construction.
\end{proof}

\begin{cor}
  If $\beta\not\in\calE_A$ then
  \[
\rho_j(\calM_A^\beta)\simeq \left\{\begin{array}{ccc}
H_{dR}^j (T_A;\kk)&\text{ if }& \beta\in\NN A;\\
0    &\text{ if }& \beta\not\in\NN A.\end{array}\right.
\]
\end{cor}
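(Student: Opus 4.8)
The plan is to combine the defining property of the exceptional locus with Lemma~\ref{lem-restr-EK}. By \cite{MMW05,SchulzeWalther-ekdi} (and, over an arbitrary field $\kk$ of characteristic zero, by the discussion preceding this subsection), the condition $\beta\notin\calE_A$ is equivalent to the statement that the Euler--Koszul complex $\calK_\bullet(S_A;E-\beta)$ is a resolution of $\calM_A^\beta$: its higher homology vanishes and it represents $\calM_A^\beta$ in $\rmD^b_{qc}(\calD_A)$. Since the restriction functor $\rho=(D_A/x_AD_A)\otimes^L_{R_A}(-)$ is a derived functor, the quasi-isomorphism $\calM_A^\beta\simeq\calK_\bullet(S_A;E-\beta)$ gives $\rho(\calM_A^\beta)\simeq\rho(\calK_\bullet(S_A;E-\beta))$, and I would record this reduction first.

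Next I would apply Lemma~\ref{lem-restr-EK} to $N=S_A$. It identifies $\rho(\calK_\bullet(S_A;E-\beta))$ with the complex $\bigwedge(\kk A)\otimes_\kk(S_A)_\beta$ equipped with the zero differential, so that passing to $j$-th homology yields $\rho_j(\calM_A^\beta)\simeq\bigwedge^j(\kk A)\otimes_\kk(S_A)_\beta$. It then remains to read off the graded piece $(S_A)_\beta$. Because $S_A\simeq\kk[\NN A]$ is $A$-graded with $\deg_A(\del_j)=\bolda_j$, every degree $\alpha\in\NN A$ is carried by a single monomial while no other degree occurs; hence $(S_A)_\beta=\kk$ when $\beta\in\NN A$ and $(S_A)_\beta=0$ otherwise. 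This produces exactly the case distinction of the statement.

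Finally I would identify $\bigwedge^j(\kk A)$ with $H_{dR}^j(T_A;\kk)$. Since $A$ has independent rows and its maximal minors generate $\ZZ$, the group $\ZZ A$ is a lattice of rank $d:=\rk(A)$ and $T_A=\Spec(\kk[\ZZ A])$ is a split torus with $\dim T_A=d=\dim_\kk(\kk A)$; its algebraic de Rham cohomology is the exterior algebra on the $d$ classes $\de t_i/t_i$, giving $H_{dR}^j(T_A;\kk)\simeq\bigwedge^j(\kk A)$. Thus $\rho_j(\calM_A^\beta)\simeq H_{dR}^j(T_A;\kk)$ for $\beta\in\NN A$ and $\rho_j(\calM_A^\beta)=0$ for $\beta\notin\NN A$.

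The computation is short once Lemma~\ref{lem-restr-EK} is in hand; the point deserving care is the passage $\rho(\calM_A^\beta)\simeq\rho(\calK_\bullet(S_A;E-\beta))$, i.e.\ that applying the \emph{derived} functor $\rho$ to the quasi-isomorphism introduces no higher $\Tor$ corrections. This is exactly what is checked in the proof of Lemma~\ref{lem-restr-EK}, where $D_A/x_AD_A$ is seen to be free, hence flat, as a right $R_A$-module, so that $(D_A/x_AD_A)\otimes^L_{R_A}(-)$ coincides with the underived tensor on the $A$-graded level. Compared with Lemma~\ref{lem-restr-sRes}, the argument only invokes $\beta\notin\calE_A$ in place of $\beta\notin\sRes(A)$, trading the topological $\ZZ A$-condition there for the purely algebraic $\NN A$-condition here; the two statements agree on the overlap $\NN A\smallsetminus\sRes(A)$ through the identification above.
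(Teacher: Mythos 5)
Your proposal is correct and follows the same route as the paper: the paper's proof is exactly the reduction $\rho(\calM_A^\beta)\simeq\rho(\calK_\bullet(S_A;E-\beta))$ for non-exceptional $\beta$ followed by an application of Lemma~\ref{lem-restr-EK}, with the identifications $(S_A)_\beta=\kk$ or $0$ and $\bigwedge^j(\kk A)\simeq H^j_{dR}(T_A;\kk)$ left implicit. You have merely spelled out the details the paper compresses into two sentences.
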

\begin{proof}
If $\beta\not\in\calE_A$ then
$\rho(\calM_A^\beta)=\rho(\calK_\bullet(S_A;E-\beta))$. Now use Lemma
\ref{lem-restr-EK}.
\end{proof}

The natural morphism $\rho(\calK_\bullet(S_A;E-\beta))\to
  \rho(\calM_A^\beta)$ need not be an isomorphism:

\begin{exa}
Let $A=\begin{pmatrix}1&1&1&1\\0&1&3&4\end{pmatrix}$ and take
$\beta=(1,2)$, the only parameter with higher Euler--Koszul homology
for this $A$ (by \cite{SturmfelsTakayama98}). 
The $\kk$-dimension vectors for $\rho(\calM_A^\beta)$ and
$\rho(\calK_\bullet(S_A;E-\beta))$ are $(0,0,1,0,0)$ and
$(0,0,0,0,0)$ respectively.
\schluss\end{exa}

In order to better understand the relationship between the
restrictions of the $A$-hypergeometric system
and the Euler--Koszul complex respectively, we consider the $3$-rd
quadrant spectral sequence
\[
E^2_{-i,-j}=\rho_j(\calH_i(-;E-\beta))\Longrightarrow
(\rho(\calK_\bullet(-;E-\beta)))_{i+j}.
\]
The $k$-th differential is $d_k\colon E^k_{-p,-q}\to
E^k_{-p-k+1,-q+k}$. A toric map $N\to N'$ induces a morphism of
corresponding spectral sequences.

All our experiments indicate that if $\beta\in\NN A$ then
$\rho_j(\calM_A^\beta)=H_{dR}^j(T_A)$, irrespective of exceptionality. While
we cannot show that, we have a one-way estimate:
\begin{lem}
If $\beta\in\NN A$ then there is a natural inclusion $\bigwedge(\kk A)\into
\rho(\calM_A^\beta)$.
\end{lem}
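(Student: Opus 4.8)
The plan is to produce the comparison map from the Euler--Koszul side and then to prove injectivity by isolating the contribution of the higher Euler--Koszul homology. For $\beta\in\NN A$ the graded piece $(S_A)_\beta$ is one-dimensional, so Lemma \ref{lem-restr-EK} identifies $\rho(\calK_\bullet(S_A;E-\beta))$ with $\bigwedge(\kk A)\otimes_\kk(S_A)_\beta\cong\bigwedge(\kk A)$, an object whose homology carries the zero differential and, by the degreewise analysis in the proof of that lemma, is concentrated in $A$-degree $\beta$. The canonical augmentation $\calK_\bullet(S_A;E-\beta)\to\calH_0(S_A;E-\beta)=\calM_A^\beta$ onto the zeroth Euler--Koszul homology is a morphism of complexes; applying $\rho$ and passing to homology produces the natural map
\[
\alpha=\bigoplus_j\alpha_j\colon\textstyle\bigwedge(\kk A)\longrightarrow\rho(\calM_A^\beta),\qquad \alpha_j\colon\textstyle\bigwedge^j(\kk A)\to\rho_j(\calM_A^\beta),
\]
whose injectivity is asserted.

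I would then reduce injectivity to a vanishing statement. Placing the augmentation into the truncation triangle
\[
\tau_{\ge1}\calK_\bullet(S_A;E-\beta)\to\calK_\bullet(S_A;E-\beta)\to\calM_A^\beta\xrightarrow{+1}
\]
and applying the triangulated functor $\rho$ yields a long exact sequence in which $\ker\alpha_j$ is the image of the map $\rho_j(\tau_{\ge1}\calK_\bullet(S_A;E-\beta))\to\rho_j(\calK_\bullet(S_A;E-\beta))$. Every map here is $A$-graded of degree zero, and by Lemma \ref{lem-restr-EK} the target $\rho_j(\calK_\bullet(S_A;E-\beta))$ lives entirely in $A$-degree $\beta$. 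Hence it is enough to show that $\rho(\tau_{\ge1}\calK_\bullet(S_A;E-\beta))$ has no component in degree $\beta$. Feeding the rows $i\ge1$ of the third-quadrant spectral sequence $E^2_{-i,-j}=\rho_j(\calH_i(S_A;E-\beta))\Longrightarrow\rho_{i+j}(\calK_\bullet(S_A;E-\beta))$ into this reduces everything to the assertion
\[
(\star)\qquad \rho\bigl(\calH_i(S_A;E-\beta)\bigr)_\beta=0\quad\text{for all }i\ge1 .
\]

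To attack $(\star)$ I would use the special structure of higher Euler--Koszul homology. For $i\ge1$ each module $\calH_i(S_A;E-\beta)$ is annihilated by every endomorphism $(E_k-\beta_k)\circ$ and is $(\prod_j\partial_j)$-torsion, and it carries a finite $A$-graded filtration by toric subquotients supported on the boundary faces $\tau\subsetneq\RR_+A$, i.e.\ by shifts of face rings $R_A/I_A^\tau$. Since $\rho$ is triangulated, its degree-$\beta$ part is built from those of the subquotients, so $(\star)$ follows once one shows that for $\beta\in\NN A$ the degree-$\beta$ part of $\rho$ of each such boundary subquotient vanishes; this is exactly where the torsion hypothesis is decisive, the point being that a $(\prod_j\partial_j)$-torsion module is supported off the big torus and therefore contributes no weight-$\beta$ class for $\beta$ ranging over the full semigroup $\NN A$.

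The hard part is precisely $(\star)$. One cannot simply assert that the higher homology vanishes: exceptional parameters $\beta\in\NN A\cap\calE_A$ do occur, and then $\calH_i(S_A;E-\beta)\ne0$. The real content is that, although these modules are nonzero, their restriction carries no class in the single degree $\beta$. Establishing this rests on the precise combinatorics of the $A$-degrees of the local cohomology modules $H^\bullet_{\partial_A}(S_A)$ that compute the higher Euler--Koszul homology, combined with the $(\prod_j\partial_j)$-torsion, and I expect this degree bookkeeping to be the main obstacle in turning the reduction above into a complete proof.
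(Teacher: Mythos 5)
Your construction of the comparison map is correct and agrees with the one underlying the paper's proof: it is the edge morphism of the hyperhomology spectral sequence, i.e.\ $\rho$ applied to the truncation $\calK_\bullet(S_A;E-\beta)\to\calM_A^\beta$, combined with the identification $\rho(\calK_\bullet(S_A;E-\beta))\simeq\bigwedge(\kk A)$ from Lemma \ref{lem-restr-EK}. The reduction of injectivity to the vanishing claim $(\star)$ is also sound in principle. The gap is that $(\star)$ is never proved, and the heuristic you offer for it is not valid: being $(\prod_j\del_j)$-torsion does not prevent a toric module from contributing degree-$\beta$ classes under $\rho$ for $\beta\in\NN A$. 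Indeed $\kk=S_A/S_A\del_A$ is torsion for every $\del_j$, yet its Euler--Koszul homology at $\beta=0\in\NN A$ is $O_A^{\binom{d+1}{i}}$ by \eqref{eq-EK-kk}, whose restriction to the origin is nonzero and concentrated precisely in degree $0=\beta$. So ``supported off the big torus'' does not imply ``no weight-$\beta$ class,'' and the degree bookkeeping you defer is exactly the hard content of the problem; it appears to be of the same order of difficulty as the (open, only experimentally supported) equality $\rho_j(\calM_A^\beta)=H_{dR}^j(T_A;\kk)$ for all $\beta\in\NN A$, which is why the lemma only claims a one-way estimate.

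The paper sidesteps $(\star)$ entirely by a comparison you do not use: map the spectral sequence for $S_A$ to the one for the normalization $\tilde S_A$. For $\beta\in\NN A$ the induced map of abutments $\rho(\calK_\bullet(S_A;E-\beta))\to\rho(\calK_\bullet(\tilde S_A;E-\beta))$ is an isomorphism by Lemma \ref{lem-restr-EK}, since $(S_A)_\beta\to(\tilde S_A)_\beta$ is an isomorphism of one-dimensional spaces. Because $\tilde S_A$ is Cohen--Macaulay, its spectral sequence is concentrated in the $i=0$ column, so the isomorphism of abutments must factor through the edge maps; since $E^\infty_{0,-j}\subseteq E^2_{0,-j}=\rho_j(\calM_A^\beta)$ (there are no incoming differentials into the $i=0$ column), injectivity of your $\alpha_j$ follows with no information at all about the columns $i\geq 1$. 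To salvage your route you would have to actually prove $(\star)$, which is strictly more than the lemma asserts; the normalization trick yields the stated inclusion for free.
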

\begin{proof}
Consider the morphism of spectral sequences attached to the inclusion
$S_A\into \tilde S_A$ of $S_A$ into its normalization.
For any $\beta\in\NN A$, the
induced map of abutments ${\rho(\calK_\bullet(S_A;E-\beta))\to
\rho(\calK_\bullet(\tilde S_A;E-\beta))}$  is an isomorphism
by Lemma \ref{lem-restr-EK}. Since $\tilde S_A$ is Cohen--Macaulay, it
has no higher Euler--Koszul homology and so the abutment
$\rho(\calK_\bullet(\tilde S_A;E-\beta))$ is stored in the $i=0$ column
of the $E^2$-term. It follows that the isomorphism on abutments must
be coming from the map of the $i=0$ column, for $k\gg 0$. But
$E^k_{0,-j}$ is a submodule of $E^2_{0,-j}$ for $k\geq 2$. In
particular, $\rho(\calK_\bullet(\tilde S_A;E-\beta))\simeq \bigwedge (\kk A)$
is contained in $\rho(\calK_\bullet(S_A;E-\beta))=\rho(\calM_A^\beta)$.
\end{proof}

\subsection{De Rham cohomology}
We consider now the effect of $(D/\del_AD_A)\otimes_{D_A}^L(-)$ on
$M_A^\beta$ and on the Euler--Koszul complex. This behaves differently
since $(D_A/\del_AD_A)\otimes_{D_A}N$ is not $N$ for most $A$-graded
$R_A$-modules $N$.

\begin{dfn}
  If $\beta$
is in $\deg_A(\rmR\Gamma_{\del_A}(S_A))=\bigcup_{k<\dim
  T_A}\deg_A(H^k_{\del_A}(S_A))\subseteq \calE_A$ it is called
\emph{strongly $A$-exceptional}.
\schluss\end{dfn}

\begin{thm}\label{thm-deRham-EK}
For any $A$-graded $R_A$-module $N$,
$(D/\del_AD_A)\otimes_{D_A}^L K_\bullet(N;E-\beta)$ vanishes
whenever $\beta$ is not an $A$-degree of
$\rmR\Gamma_{\del_A}(N))$. More precisely,
\[
(D/\del_AD_A)\otimes_{D_A}^L K_\bullet(N;E-\beta)
  \simeq 
\left(\bigoplus_i H^i_{\del_A}(N)\right)_\beta\otimes_\kk
\bigwedge(\kk A)[\dim X_A].
\]
As in Lemma \ref{lem-restr-EK}, an $A$-graded map $N\to N'$ induces a
map of de Rham complexes that is identified with
$\left(\rmR\Gamma_{\del_A}(N)\to
\rmR\Gamma_{\del_A}(N')\right)_\beta\otimes_\kk \bigwedge(\kk A)[\dim
  X_A]$.

If $\beta$ is not strongly exceptional (\emph{e.g}, if $S_A$ is Cohen--Macaulay),
then 
\[
\Tor_\bullet^{D_A}(D/\del_AD_A,K_\bullet(S_A;E-\beta))=
H_{dR}^{\bullet+\dim X_A}(T_A;\kk)
\]
if $\beta$ is in $\deg_A(H^{\dim T_A}_{\del_A}(S_A))$ and
zero otherwise.
\end{thm}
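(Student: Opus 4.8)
The plan is to run the de Rham analogue of the restriction computation in Lemma~\ref{lem-restr-EK}, with $\rho=(D_A/x_AD_A)\otimes^L_{R_A}(-)$ replaced by $(D_A/\del_AD_A)\otimes^L_{R_A}(-)$. As there, I first extend the Euler--Koszul functor to modules $Q\otimes_{R_A}N$ for $Q$ a right $A$-graded $D_A$-module, and note that each endomorphism $(E_i-\beta_i)\circ$ of $D_A\otimes_{R_A}N$ is left $D_A$-linear, hence is carried through the functor $(D_A/\del_AD_A)\otimes^L_{D_A}(-)$. Picking an $A$-graded flat resolution $P^\bullet\to D_A/\del_AD_A$ by right $D_A$-modules and forming $\Tot(P^\bullet\otimes_{D_A}K_\bullet(N;E-\beta))$ exhibits the left-hand side as the total complex of a double complex: one direction is the Koszul complex on $\bigwedge^\bullet(\kk A)$ with differential induced by $\{E_i-\beta_i\}$, and the other is
\[
P^\bullet\otimes_{D_A}(D_A\otimes_{R_A}N)=P^\bullet\otimes_{R_A}N=(D_A/\del_AD_A)\otimes^L_{R_A}N\simeq\rmR\Gamma_{\del_A}(N)[\dim X_A],
\]
the final identification being Lemma~\ref{lem-lc-tor}, whose cohomology is $\bigoplus_iH^i_{\del_A}(N)$.

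The crux is the residual action of each $E_i$ on $(D_A/\del_AD_A)\otimes^L_{R_A}N$. I claim that on the degree-$\gamma$ part (in the local cohomology grading, for which the socle $1/\prod_j\del_j$ of $D_A/\del_AD_A$ has degree $-\eps_A$) the induced operator $E_i\circ$ is multiplication by $\gamma_i$. I would establish this on the flat \v Cech model ${\check C}_A^\bullet\otimes_{R_A}N$, so that it is automatically compatible with the $\rmR\Gamma_{\del_A}$-differentials: writing a $\kk$-basis of $D_A/\del_AD_A$ as the classes $[x^\boldu]$, the relation $x^\boldu E_i-E_ix^\boldu=-(A\boldu)_ix^\boldu$ together with $x_j\del_j=\del_jx_j-1$ and $[\del_jx^{\boldu+\bolde_j}]=0$ gives $[E_ix^\boldu]=-(\eps_A)_i[x^\boldu]$; combining the correction $\deg_{A,i}$ built into the $E_i\circ$-action with the $-\eps_A$ shift of the socle then converts this into the eigenvalue $\gamma_i$. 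Hence $(E_i-\beta_i)\circ$ acts on the degree-$\gamma$ part as the scalar $\gamma_i-\beta_i$.

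With this scalar description the double complex splits over $\gamma\in\ZZ A$ and I conclude exactly as in Lemma~\ref{lem-restr-EK}. For $\gamma\neq\beta$ some $\gamma_i-\beta_i$ is a unit, the Koszul direction is exact, and that graded summand of the total complex is acyclic. For $\gamma=\beta$ all Koszul differentials vanish, so the summand is $\bigwedge(\kk A)\otimes_\kk(\rmR\Gamma_{\del_A}(N)[\dim X_A])_\beta$, whose cohomology is the asserted $\big(\bigoplus_iH^i_{\del_A}(N)\big)_\beta\otimes_\kk\bigwedge(\kk A)[\dim X_A]$; in particular everything vanishes unless $\beta$ is an $A$-degree of $\rmR\Gamma_{\del_A}(N)$. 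Naturality of each construction gives the statement for an $A$-graded map $N\to N'$.

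For the last assertion I put $N=S_A$. If $\beta$ is not strongly exceptional then $\beta\notin\deg_AH^k_{\del_A}(S_A)$ for every $k<\dim T_A$, so $\big(\bigoplus_iH^i_{\del_A}(S_A)\big)_\beta=\big(H^{\dim T_A}_{\del_A}(S_A)\big)_\beta$. Since the top local cohomology of the affine semigroup ring is square-free (in particular when $S_A$ is Cohen--Macaulay), this graded piece is at most one-dimensional, and it is nonzero exactly when $\beta\in\deg_A(H^{\dim T_A}_{\del_A}(S_A))$; identifying $\bigwedge(\kk A)$ with $H^\bullet_{dR}(T_A)$ and absorbing the shift $[\dim X_A]$ then yields $\Tor^{D_A}_\bullet=H^{\bullet+\dim X_A}_{dR}(T_A)$, and zero otherwise. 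The step I expect to be the main obstacle is precisely the residual-action computation in the derived setting: unlike the restriction case, where $(D_A/x_AD_A)\otimes_{R_A}N=N$ is underived, here $(D_A/\del_AD_A)\otimes^L_{R_A}N$ carries several nonzero cohomologies, so one must verify the scalar action $\gamma_i-\beta_i$ on a model compatible with the local-cohomology differential before the grading argument can collapse the double complex.
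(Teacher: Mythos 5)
Your proposal is correct and follows essentially the same route as the paper: identify $(D_A/\del_AD_A)\otimes^L_{D_A}K_\bullet(N;E-\beta)$ with the Euler--Koszul complex on $(D_A/\del_AD_A)\otimes^L_{R_A}N\simeq\rmR\Gamma_{\del_A}(N)[\dim X_A]$ (Lemma \ref{lem-lc-tor}), compute via $[E_ix^\boldu]=-\eps_{A,i}[x^\boldu]$ that each $(E_i-\beta_i)\circ$ acts on the degree piece as the scalar $\gamma_i-\beta_i$ after the $\eps_A$-shift, and split the complex over $\ZZ A$ so that only the $\gamma=\beta$ summand survives, with zero Koszul differentials there. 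The only differences are cosmetic (you resolve $D_A/\del_AD_A$ flatly over $D_A$ and form a double complex, while the paper resolves $N$ freely over $R_A$; you phrase the eigenvalue in the local-cohomology grading, the paper in the unshifted one), and your treatment of the Cohen--Macaulay/non-strongly-exceptional case matches the paper's observation that $H^{\dim T_A}_{\del_A}(S_A)$ is a subquotient of $\kk[\ZZ A]$ with Hilbert function in $\{0,1\}$.
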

\begin{proof}
  As in the proof of Lemma \ref{lem-restr-EK},
  we extend the action of the Euler operators to the
quotient $(D_A/\del_AD_A)\otimes_{R_A}N$ for any $A$-graded $N$. Hence
$(D_A/\del_AD_A)\otimes^L_{D_A}K_\bullet(N;E-\beta)=
K_\bullet((D_A/\del_AD_A)\otimes^L_{R_A}N;E-\beta)$ 
for any $A$-graded $R_A$-module $N$.

Recall that $\eps_A=\sum_j\bolda_j$ and that its components
$\eps_{A,i}$ satisfy $E_i+\eps_{A,i}=\sum_ja_{ij}\del_jx_j$.  Take now
a free $A$-graded $R_A$-resolution $F_\bullet$ for $N$. Then for any
$A$-graded element $P\otimes f\in (D_A/\del_AD_A)\otimes_{R_A}F_k$ the
cosets of $(E_i-\beta_i)\circ(P\otimes f)$, of
$(E_i-\beta_i+\deg_{A,i}(P\otimes f))P\otimes f$ and of
$(-\eps_{A,i}-\beta_i+\deg_{A,i}(P\otimes f))P\otimes f$ coincide. So,
as in Lemma \ref{lem-restr-EK}, the Euler--Koszul complex on
$(D_A/\del_AD_A)\otimes F_\bullet$ is in degree $\alpha$ the Koszul
complex on $D_A/\del_AD_A\otimes F_\bullet$ induced by the numbers
$\{-\eps_{A_i}-\beta_i+\alpha_i\}$. Hence
$K_\bullet((D_A/\del_AD_A)\otimes^L_{R_A}N;E-\beta)$ can only have
cohomology when $(D_A/\del_AD_A)\otimes^L_{R_A}N$ has a cohomology
class in degree $\beta+\eps_A$. By Lemma \ref{lem-lc-tor} this is
equivalent to $\beta$ being the degree of a nonzero cohomology class
in ${\check C}_A^\bullet\otimes_{R_A}N$ which proves the first claim.

If $\rmR\Gamma_{\del_A}(N)$ is non-exact in degree $\beta$ then
$(D_A/\del_AD_A)\otimes^L_{R_A}(K_\bullet(N;E-\beta-\eps_A))$ is
$(H^\bullet_{\del_A}(N))_\beta$ tensored with a Koszul complex (shifted by $\dim X_A$) on
$\dim (T_A)$ maps $\kk\to\kk$ each of which is the zero map. Hence in
this case, the resulting cohomology is
$(H^\bullet_{\del_A}(N))_\beta\otimes_\kk \bigwedge(\kk A)[\dim X_A]$.
The indicated naturality condition is clear from the discussion.

If $\beta$ is not strongly exceptional then
$(\rmR\Gamma_{\del_A}(S_A))_\beta\simeq (H^{\dim
  T_A}_{\del_A}(S_A))_\beta$.  The latter is a subquotient of $\kk[\ZZ
  A]$ and hence its $A$-graded Hilbert function takes values in
$\{0,1\}$. The final claim follows.
\end{proof}

\begin{exa}
Let $A=\begin{pmatrix}1&1&1&1\\0&1&3&4\end{pmatrix}$. Then 
$(H^2_{\del_A}(S_A))_\beta$ is nonzero exactly if $\beta$ is an
interior  lattice
point of $-\RR_+ A$, while $H^1_{\del_A}(S_A)$ is a $1$-dimensional
vector space concentrated in degree $(1,2)$. 
It follows that
$\Tor_\bullet^{D_A}(D/\del_AD_A,K_\bullet(S_A;E-\beta))$ is
$H^{\bullet+4}(T_A;\kk)$ when $\beta$ supports $H^2_{\del_A}(S_A)$; it is
the shifted $H^{\bullet+4}(T_A;\kk)[1] =H^{\bullet+5}(T_A;\kk)$ when $\beta=(1,2)$; it is zero in all
other cases.

In particular, no simple general formula (not appealing to local
cohomology modules) for
$\Tor_\bullet^{D_A}(D/\del_AD_A,K_\bullet(S_A;E-\beta))$ comes to
mind.
\schluss\end{exa}

\begin{cor}\label{cor-TorExt-zero}
$\Tor_\bullet^{D_A}(D/\del_AD_A,M_A^\beta)$ and
  $\Ext^\bullet_{D_A}(O_A,M_A^\beta)$ are nonzero only if
\[  
\beta\in
\calE_A\cup(\deg_A(H^{\dim(S_A)}_{\del_A}(S_A))).
\]
\end{cor}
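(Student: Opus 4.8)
The plan is to reduce everything to Theorem~\ref{thm-deRham-EK} by splitting on whether $\beta$ lies in $\calE_A$. If $\beta\in\calE_A$ there is nothing to prove, since such parameters already lie in the asserted set. So I would assume $\beta\notin\calE_A$. Then, by the results recalled from \cite{MMW05,SchulzeWalther-ekdi} (the failure of $\calK_\bullet(S_A;E-\beta)$ to resolve $\calM_A^\beta$ is exactly $\calE_A$), the Euler--Koszul complex is a resolution, i.e.\ $K_\bullet(S_A;E-\beta)\simeq M_A^\beta$ in $\rmD^b(D_A)$. Since $(D/\del_AD_A)\otimes^L_{D_A}(-)$ sends quasi-isomorphisms to quasi-isomorphisms, this is what lets me replace $M_A^\beta$ by the complex $K_\bullet(S_A;E-\beta)$ and feed it into Theorem~\ref{thm-deRham-EK} with $N=S_A$.

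Concretely, for $\beta\notin\calE_A$ I would compute
\[
\Tor_\bullet^{D_A}(D/\del_AD_A,M_A^\beta)=(D/\del_AD_A)\otimes^L_{D_A}K_\bullet(S_A;E-\beta)\simeq\Bigl(\bigoplus_i H^i_{\del_A}(S_A)\Bigr)_\beta\otimes_\kk\bigwedge(\kk A)[\dim X_A],
\]
the last identification being Theorem~\ref{thm-deRham-EK}. Hence these Tor groups can be nonzero only when $\beta$ is an $A$-degree of $\rmR\Gamma_{\del_A}(S_A)=\bigoplus_k H^k_{\del_A}(S_A)$. What remains is pure bookkeeping on degree sets: writing $\deg_A(\rmR\Gamma_{\del_A}(S_A))=\bigl(\bigcup_{k<\dim T_A}\deg_A H^k_{\del_A}(S_A)\bigr)\cup\deg_A H^{\dim T_A}_{\del_A}(S_A)$ and recalling $\dim S_A=\dim T_A$, the first union sits inside $\calE_A$ by the very definition of the exceptional locus (since $\deg_A(-)\subseteq\overline{\deg_A(-)}^{Zar}$), while the last term is exactly $\deg_A(H^{\dim(S_A)}_{\del_A}(S_A))$. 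Combined with the already-settled case $\beta\in\calE_A$, this proves the claim for the Tor groups.

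For the Ext groups I would pass through the de Rham complex. Using the Spencer (Koszul) resolution of $O_A=D_A/D_A\del_A$ by free left $D_A$-modules $D_A\otimes_\kk\bigwedge^\bullet(\kk A)$ with right-multiplication-by-$\del_j$ differentials, the groups $\Ext^\bullet_{D_A}(O_A,M_A^\beta)$ are computed by $\bigwedge^\bullet(\kk A)\otimes_\kk M_A^\beta$ with the de Rham differential; dually, the free resolution of the right module $D/\del_AD_A$ computes $\Tor_\bullet^{D_A}(D/\del_AD_A,M_A^\beta)$ by the same exterior-algebra complex read homologically. Thus both functors return the de Rham cohomology of $M_A^\beta$, related by $\Ext^k_{D_A}(O_A,M_A^\beta)\cong\Tor^{D_A}_{\dim X_A-k}(D/\del_AD_A,M_A^\beta)$, so they are nonzero for exactly the same set of parameters $\beta$. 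Hence the Ext statement follows verbatim from the Tor statement.

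The only genuinely delicate point is the first reduction: Theorem~\ref{thm-deRham-EK} computes the derived tensor product of $D/\del_AD_A$ against the \emph{complex} $K_\bullet(S_A;E-\beta)$, and this equals the Tor of $M_A^\beta$ only because the complex resolves $M_A^\beta$, which fails precisely on $\calE_A$. The crucial observation that makes the argument go through is that this failure occurs only for $\beta$ already inside the target set $\calE_A\cup\deg_A(H^{\dim(S_A)}_{\del_A}(S_A))$, so nothing is lost there. Everything else is the degree-set comparison above together with the standard identification of $\Ext$ and $\Tor$ with de Rham cohomology.
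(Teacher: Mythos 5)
Your argument is correct and follows essentially the same route as the paper: split on whether $\beta\in\calE_A$, use that the Euler--Koszul complex resolves $M_A^\beta$ off $\calE_A$ to invoke Theorem~\ref{thm-deRham-EK}, note that the degrees of the lower local cohomology modules already lie in $\calE_A$, and reduce Ext to Tor by dualizing a free resolution of $O_A$. Your write-up merely makes explicit the degree-set bookkeeping that the paper leaves implicit.
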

\begin{proof}
Note first that resolving $O_A$ over $D_A$ and dualizing the
resolution gives a resolution of (a cohomologically shifted)
$D_A/\del_AD_A$, so that the Ext- and Tor-claims are equivalent.

By \cite{MMW05}, the Euler--Koszul complex resolves
$M_A^\beta$ whenever $\beta\not\in\calE_A$.  So, for such $\beta$
not in $\deg_A(H^{\dim(S_A)}_{\del_A}(S_A))$, the
indicated Ext- and Tor-groups vanish by Theorem \ref{thm-deRham-EK}.
\end{proof}

\begin{dfn}
Let $N_A$ be the \emph{interior ideal} of $S_A$, generated by the
monomials whose degrees are in the topological interior of $\RR_+A$.
\schluss\end{dfn}

\begin{cor}\label{cor-deRham-GKZ}
If $S_A$ is normal, then
\begin{align}
\Tor_\bullet^{D_A}(D/\del_AD_A,M_A^\beta)&=
\Tor_\bullet^{D_A}(D/\del_AD_A,K_\bullet(S_A;E-\beta)) \notag \\
&= \left\{\begin{array}{cc}H_{dR}^{\bullet + \dim X_A}(T_A;\kk)&\text{ if }-\beta\in \deg_A(N_A);
\\0&\text{else}.\end{array}\right. \notag
\end{align}
\end{cor}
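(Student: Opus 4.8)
The statement to prove is Corollary \ref{cor-deRham-GKZ}, which computes the de Rham--type Tor groups when $S_A$ is normal. The first reduction is to observe that normality of $S_A$ implies, in particular, that $S_A$ is Cohen--Macaulay. Consequently the exceptional locus $\calE_A$ is empty (there is no higher local cohomology $H^k_{\del_A}(S_A)$ below top degree $\dim T_A$), so by \cite{MMW05} the Euler--Koszul complex $\calK_\bullet(S_A;E-\beta)$ resolves $\calM_A^\beta$ for \emph{every} $\beta$. This immediately gives the first equality
\[
\Tor_\bullet^{D_A}(D/\del_AD_A,M_A^\beta)=
\Tor_\bullet^{D_A}(D/\del_AD_A,K_\bullet(S_A;E-\beta)),
\]
since replacing a module by a resolution does not change the derived tensor product.

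\emph{The computation of the second line.} For the main formula I would invoke Theorem \ref{thm-deRham-EK} directly with $N=S_A$. Since $S_A$ is Cohen--Macaulay, no $\beta$ is strongly exceptional (the only possibly nonzero local cohomology is the top one $H^{\dim T_A}_{\del_A}(S_A)$), so the final clause of that theorem applies: the Tor groups equal $H_{dR}^{\bullet+\dim X_A}(T_A;\kk)$ precisely when $\beta\in\deg_A(H^{\dim T_A}_{\del_A}(S_A))$ and vanish otherwise. It therefore remains only to identify this degree set. By local duality (or the graded Hochster/Ishida description of local cohomology of a normal semigroup ring), $H^{\dim T_A}_{\del_A}(S_A)$ is the graded dual of the canonical module, whose degrees are exactly the interior lattice points of the cone, translated by $\eps_A$. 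Concretely $-\deg_A(H^{\dim T_A}_{\del_A}(S_A))=\deg_A(N_A)$, where $N_A$ is the interior ideal; the sign flip is forced by the convention $-\deg_A(x_j)=\bolda_j$ together with the socle-degree shift by $\eps_A$ recorded in Lemma \ref{lem-lc-tor}. Thus the condition $\beta\in\deg_A(H^{\dim T_A}_{\del_A}(S_A))$ rewrites as $-\beta\in\deg_A(N_A)$, which is the asserted dichotomy.

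\emph{The main obstacle.} The genuine content is the identification of degrees
\[
\deg_A\bigl(H^{\dim T_A}_{\del_A}(S_A)\bigr)=-\deg_A(N_A),
\]
i.e.\ that the top local cohomology of the normal semigroup ring, with its $A$-grading, is supported exactly on the negatives of the interior-point degrees. Here I expect to lean on the fact that for a normal affine semigroup ring the graded canonical module is the \emph{interior ideal} (the ideal spanned by monomials whose exponents lie in the relative interior of $\RR_+A$), a standard result in combinatorial commutative algebra, and then to dualize using Lemma \ref{lem-lc-tor}, which already carries the grading shift by $\eps_A=\sum_j\bolda_j$ and the right $D_A$-structure. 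Tracking this shift against the two sign conventions in play (the negative grading on the $x_j$ and the socle shift) is the one delicate bookkeeping step; everything else is a direct application of Theorem \ref{thm-deRham-EK} and the vanishing of $\calE_A$ in the normal case.
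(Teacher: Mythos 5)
Your proposal is correct and follows essentially the same route as the paper's own proof: emptiness of the exceptional locus in the normal (hence Cohen--Macaulay) case gives the first equality, Theorem \ref{thm-deRham-EK} gives the dichotomy in terms of $\deg_A(H^{\dim T_A}_{\del_A}(S_A))$, and the identification of that degree set with $-\deg_A(N_A)$ comes from the interior ideal being the graded canonical module together with graded local duality. The paper is merely more terse, citing Bruns--Herzog for the canonical-module and duality facts rather than spelling out the grading bookkeeping.
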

\begin{proof}
The exceptional locus is here empty. The interior ideal is
the canonical module $\omega_{S_A}$ in the $A$-graded category by
\cite[Cor.~6.3.6]{BrunsHerzog} while also in the $A$-graded category
$\omega_{S_A}=\Ext^{\dim X_A-\dim T_A}_{R_A}(S_A,\omega_{R_A})$,
\cite[Prop.~3.6.12]{BrunsHerzog}. Then graded local duality
\cite[Thm.~3.6.19]{BrunsHerzog} yields that 
$\deg_A(H^{\dim T_A}_{\del_A}(S_A))=-\deg_A(N_A)$. Now use Theorem
\ref{thm-deRham-EK}.
\end{proof}

For our applications, it is interesting to know that $\NN A$ does not
meet the strongly $A$-exceptional locus where for $\tau\subseteq A$ a
face we write $\del_\tau$ for $\{\del_j\}_{j\in\tau}$:

\begin{lem}\label{lem-deg-lc}
For any face $\tau$ of $A$, no element of $\NN A$ is a degree of $\rmR
\Gamma_{\del_\tau}(S_A)$.
\end{lem}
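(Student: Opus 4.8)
The plan is to compute $\rmR\Gamma_{\del_\tau}(S_A)$ explicitly by the stable Koszul (\v Cech) complex on the variables indexed by $\tau$, as in Lemma \ref{lem-lc-tor}, and to read off its $\ZZ A$-graded degrees one strand at a time. Writing
\[
\check C_\tau^\bullet \;=\; \bigotimes_{j\in\tau}\bigl(S_A\to S_A[\del_j^{-1}]\bigr),
\]
a $\ZZ A$-graded complex of $S_A$-modules, I would fix an arbitrary $\bolda\in\NN A$ and show that the degree-$\bolda$ strand of $\check C_\tau^\bullet$ is acyclic. Since the cohomology of $\check C_\tau^\bullet$ is $H^\bullet_{\del_\tau}(S_A)$, this says exactly that $\bolda$ is not a degree of any $H^p_{\del_\tau}(S_A)$, which is the assertion.

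The key input is that $S_A$ and its relevant localizations have indicator Hilbert functions. As $S_A=\kk[\NN A]=\bigoplus_{\mathbf{b}\in\NN A}\kk\,t^{\mathbf{b}}$ is a domain, $(S_A)_{\mathbf{b}}=\kk$ for $\mathbf{b}\in\NN A$ and $0$ otherwise, while $S_A[\del_j^{-1}]=\kk[\NN A+\ZZ\bolda_j]$ is likewise one-dimensional in precisely the degrees lying in $\NN A+\ZZ\bolda_j$. For $\bolda\in\NN A$ one has $\bolda\in\NN A\subseteq \NN A+\ZZ\bolda_j$ for every $j$, so in degree $\bolda$ each tensor factor $\bigl(S_A\to S_A[\del_j^{-1}]\bigr)$ reduces to the localization map $\kk\to\kk$, which is the identity (it sends the class of $t^{\bolda}$ to itself). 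Hence the degree-$\bolda$ strand of each factor is the acyclic two-term complex $[\kk\xrightarrow{\sim}\kk]$.

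Finally I would invoke K\"unneth over the field $\kk$: the cohomology of a bounded tensor product of complexes of $\kk$-vector spaces is the tensor product of their cohomologies. Because $\tau$ is a face it is the set of columns minimizing some functional, hence nonempty, so the degree-$\bolda$ strand of $\check C_\tau^\bullet$ is a nonempty tensor product of acyclic complexes and is therefore acyclic. Thus $H^p_{\del_\tau}(S_A)_{\bolda}=0$ for all $p$, and as $\bolda\in\NN A$ was arbitrary the lemma follows.

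The proof is short once set up, and the only point that genuinely needs care is the bookkeeping of the Hilbert functions of the localizations $S_A[\del_j^{-1}]=\kk[\NN A+\ZZ\bolda_j]$ together with the observation that membership $\bolda\in\NN A$ forces every one of these localization maps to be an isomorphism in degree $\bolda$. I expect the main subtlety to be conceptual rather than computational: recognizing that nonemptiness of $\tau$ (automatic for faces) is exactly what collapses the strand, so that the full face structure of $\tau$ is not otherwise needed.
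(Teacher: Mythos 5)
Your proof is correct and follows essentially the same route as the paper's: both compute $\rmR\Gamma_{\del_\tau}(S_A)$ via the \v Cech complex on $\del_\tau$ and use that $S_A$ is a domain whose $A$-graded localizations have $0/1$ Hilbert functions, so that for $\bolda\in\NN A$ every localization map in sight is an isomorphism in degree $\bolda$. The paper packages the conclusion through the double-complex spectral sequence obtained by splitting off one \v Cech factor, whereas you inspect the degree-$\bolda$ strand directly; the two bookkeeping devices are interchangeable here.
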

\begin{proof}
  For $j\in\tau$, $\bigotimes_\tau(S_A\ra S_A[\del_j^{-1}])\simeq
  (S_A\ra S_A[\del_j^{-1}])\otimes \bigotimes_{\tau\ni j'\not
    =j}(S_A\ra S_A[\del_{j'}^{-1}])$. The corresponding double complex
  spectral sequence starts on the $E^1$-page with modules of the form
  $S_A[(\del_j\cdot\prod_{j'\in\tau'}\del_{j'})^{-1}]/S_A[(\prod_{j'\in\tau'}\del_{j'})^{-1}]$
  for all possible $\tau'\subseteq \tau\smallsetminus \{j\}$.  
  The $\kk$-dimension of $A$-graded localizations of $S_A$ in each $A$-degree
  is zero or one, and $S_A$ is a domain,. So,
  $S_A[(\del_j\cdot\prod_{j'\in\tau'}\del_{j'})^{-1}]/S_A[(\prod_{j'\in\tau'}\del_{j'})^{-1}]$
  is of dimension zero in each degree $\beta\in\NN A$. Hence the same
  holds for the abutment.
\end{proof}

In contrast, elements of $\NN A$, including the origin $0$, can indeed
be \emph{quasi}-degrees of lower local cohomology (and hence exceptional
parameters):
\begin{exa}
Let
$A=\begin{pmatrix}2&1&0&1&0\\0&1&1&0&1\\0&0&0&1&1\end{pmatrix}$. The
exceptional locus is the line $\kk\cdot \bolda_1$.
\schluss\end{exa}

The following corollary will be used in Section \ref{sec-4term}
\begin{cor}
Suppose $S_A$ is Cohen--Macaulay, 
and put $M=H_0(S_A\del_A;E-\beta)$.
Then for $\beta=0$,
\[
\Tor_i^{D_A}(D_A/\del_AD_A,M)=\left\{\begin{array}{ccl}
\kk^{\dim T_A}&\text{ if }&i=\dim X_A;\\
\kk&\text{ if }&i=\dim X_A -1;\\
0&\text{ if }&i<\dim X_A -1;\\\end{array}\right.
\]
while all Tor-groups vanish if $0\neq \beta\in\NN A$.
\end{cor}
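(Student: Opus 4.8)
The plan is to build $M=H_0(S_A\del_A;E-\beta)$ out of the Cohen--Macaulay module $S_A$ via the short exact sequence of $A$-graded $R_A$-modules
\[
0\to S_A\del_A\to S_A\to\kk\to 0,
\]
where $\kk=S_A/S_A\del_A$ is concentrated in degree $\boldzero_A$. Applying the Euler--Koszul functor yields a long exact homology sequence, and since $S_A$ is Cohen--Macaulay we have $H_i(S_A;E-\beta)=0$ for $i>0$ and $H_0(S_A;E-\beta)=M_A^\beta$. Thus everything is governed by $H_\bullet(\kk;E-\beta)$. By the computation recorded just after the definition of toric modules, $K_\bullet(\kk;E-\beta)$ is exact when $\beta\neq\boldzero_A$ and has zero differentials when $\beta=\boldzero_A$; in the latter case, using $D_A\otimes_{R_A}\kk\simeq O_A$, one reads off $H_i(\kk;E-\boldzero_A)\simeq O_A\otimes_\kk\bigwedge^i(\kk A)$. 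Throughout I abbreviate $\Tor_i(-):=\Tor^{D_A}_i(D_A/\del_AD_A,-)$, the homology groups of $(D_A/\del_AD_A)\otimes^L_{D_A}(-)$.

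Consider first $\boldzero_A\neq\beta\in\NN A$. Then $H_\bullet(\kk;E-\beta)=0$, so the long exact sequence collapses to $M\simeq M_A^\beta$ with vanishing higher Euler--Koszul homology. Because $S_A$ is Cohen--Macaulay, $\calE_A=\emptyset$; and because $\beta\in\NN A$, Lemma \ref{lem-deg-lc} (applied to the improper face $\tau=A$, for which $\del_\tau=\del_A$) shows that $\beta$ is not an $A$-degree of $\rmR\Gamma_{\del_A}(S_A)$, in particular $\beta\notin\deg_A(H^{\dim T_A}_{\del_A}(S_A))$. Corollary \ref{cor-TorExt-zero} therefore gives $\Tor_\bullet(M_A^\beta)=0$, which is the assertion in this case.

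Now let $\beta=\boldzero_A$. The long exact sequence collapses to the four--term exact sequence of $D_A$-modules
\[
0\to O_A^{\dim T_A}\to M\to M_A^{\boldzero_A}\to O_A\to 0,
\]
whose left term is $O_A\otimes_\kk\bigwedge^1(\kk A)=O_A\otimes_\kk(\kk A)$ and whose right term is $O_A\otimes_\kk\bigwedge^0(\kk A)$. Two inputs drive the computation. First, exactly as above (Corollary \ref{cor-TorExt-zero} and Lemma \ref{lem-deg-lc}, using $\boldzero_A\in\NN A$), one has $\Tor_\bullet(M_A^{\boldzero_A})=0$. Second, $(D_A/\del_AD_A)\otimes^L_{D_A}O_A$ computes the algebraic de Rham cohomology of affine space $X_A\simeq\AA^{\dim X_A}$, so $\Tor_i(O_A)=\kk$ for $i=\dim X_A$ and $\Tor_i(O_A)=0$ otherwise. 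I split the four--term sequence at $K:=\ker(M_A^{\boldzero_A}\to O_A)$ into $0\to O_A^{\dim T_A}\to M\to K\to 0$ and $0\to K\to M_A^{\boldzero_A}\to O_A\to 0$. Feeding the second into the long exact $\Tor$-sequence and using $\Tor_\bullet(M_A^{\boldzero_A})=0$ gives $\Tor_j(K)\simeq\Tor_{j+1}(O_A)$, which is $\kk$ for $j=\dim X_A-1$ and $0$ otherwise. Feeding the first then yields $\Tor_{\dim X_A}(M)=\kk^{\dim T_A}$, $\Tor_{\dim X_A-1}(M)=\kk$, and $\Tor_i(M)=0$ for $i<\dim X_A-1$; the groups in degrees above $\dim X_A$ vanish because $D_A/\del_AD_A$ admits a free resolution of length $\dim X_A$ over $D_A$ (the Koszul complex on $\del_A$).

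The step I expect to be decisive is the vanishing $\Tor_\bullet(M_A^{\boldzero_A})=0$: without it the middle term $M_A^{\boldzero_A}$ of the four--term sequence would contribute, and the de Rham cohomology of $M$ would fail to reduce to that of the flanking copies of $O_A$. This vanishing is precisely where Lemma \ref{lem-deg-lc} enters --- that $\NN A$, and in particular $\boldzero_A$, meets none of the $A$-degrees of $\rmR\Gamma_{\del_A}(S_A)$ --- while the Cohen--Macaulay hypothesis does double duty, making $\calE_A$ empty and ensuring that $K_\bullet(S_A;E-\beta)$ resolves $M_A^\beta$ so that Corollary \ref{cor-TorExt-zero} (via Theorem \ref{thm-deRham-EK}) applies.
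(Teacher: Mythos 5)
Your argument is correct and follows the paper's proof essentially verbatim: both pass from the toric sequence $0\to S_A\del_A\to S_A\to \kk\to 0$ to the four-term Euler--Koszul sequence, dispose of $0\neq\beta\in\NN A$ via the vanishing of $\Tor_\bullet^{D_A}(D_A/\del_AD_A,M_A^\beta)$ on $\NN A$ (Theorem \ref{thm-deRham-EK}/Corollary \ref{cor-TorExt-zero} plus Lemma \ref{lem-deg-lc}), and for $\beta=0$ split the four-term sequence at $M/O_A^{\dim T_A}$ and chase the two resulting long exact $\Tor$-sequences. Your explicit identification of $\Tor_\bullet(O_A)$ with the de Rham cohomology of affine space is just a spelled-out version of what the paper leaves implicit.
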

\begin{proof}
Consider the toric sequence $0\to S_A\del_A\to S_A\to \kk\to
0$. Cohen--Macaulayness ensures, by \cite{MMW05}, that the
Euler--Koszul functor produces an exact sequence
\begin{gather}\label{eq-4term-toric}
0\to H_1(\kk;E-\beta)\to H_0(S_A\del_A;E-\beta)\to M_A^\beta\to
H_0(\kk;E-\beta)\to 0.
\end{gather}
For $\beta\not =0$, the outer modules are zero. For $\beta=0$, the
right module is $O_A$ and the left is $O_A^{\dim T_A}$. The
claim then follows from Theorem \ref{thm-deRham-EK} and Lemma
\ref{lem-deg-lc}: 
apply $\Tor_\bullet^{D_A}(D_A/\del_AD_A,-)$ to $0\to M/O_A^{\dim T_A}\to
M_A^\beta\to O_A\to 0$ and
$0\to O_A^{\dim
  T_A}\to M\to M/ O_A^{\dim T_A}\to 0$.
\end{proof}

\subsection{Ext and the polynomial solution functor}

Dualizing a $D_A$-resolution of $O_A$ gives a resolution of (a
cohomologically shifted) $D_A/\del_AD_A$. Hence, up to shift by
$\eps_A$ in the $A$-grading,
$\Ext^\bullet_{D_A}(O_A,M_A^\beta)=\Tor^{D_A}_{\dim X_A-\bullet}(D_A/\del_AD_A,M_A^\beta)$.
In particular, the vanishing results in the previous section apply
to $\Ext^\bullet_{D_A}(O_A,M_A^\beta)$.

In this subsection we consider the behavior of the solution functor
$\Hom_{D_A}(-,O_A)$ with values in the ring of polynomials on the
class of $A$-hypergeometric modules $M_A^\beta$. It is immediately
clear that $\Hom_{D_A}(M_A^\beta,O_A)$ can only be nonzero if
$\beta\in\NN A$, and it is an old result that $\beta\in\NN A $ implies
that $\Hom_{D_A}(M_A^\beta,O_A)$ is 1-dimensional, see
\cite[Prop.~3.4.11]{SST}. We investigate here the derived polynomial
solution functor and prove

\begin{thm}\label{thm-ext-gkz-O}
If $\beta\not\in\calE_A$ (for example, if $S_A$ is
Cohen--Macaulay) then
\[
\Ext^i_{D_A}(M_A^\beta,O_A)=\left\{\begin{array}{ccc}
H_{dR}^i(T_A;\kk)&\text{ if }&\beta\in\NN A;\\
0&&else.\end{array}\right.
\]
\end{thm}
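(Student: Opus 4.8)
The plan is to compute $\rmR\Hom_{D_A}(M_A^\beta,O_A)$ by replacing $M_A^\beta$ with a genuine free $D_A$-resolution manufactured from the Euler--Koszul complex, and then to reduce everything to a degree-by-degree Koszul analysis exactly in the spirit of Lemma \ref{lem-restr-EK} and Theorem \ref{thm-deRham-EK}. First I would fix a free $A$-graded $R_A$-resolution $F_\bullet\to S_A$ and form the double complex $K_\bullet(F_\bullet;E-\beta)$, whose total complex $\mathcal{P}_\bullet:=\Tot(K_\bullet(F_\bullet;E-\beta))$ has terms $\bigwedge^p(\kk A)\otimes_\kk(D_A\otimes_{R_A}F_q)$; since each $F_q$ is $R_A$-free these are free $D_A$-modules. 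Because $\beta\not\in\calE_A$, the Euler--Koszul complex $K_\bullet(S_A;E-\beta)$ resolves $M_A^\beta$ by \cite{MMW05}, and the spectral sequence of the double complex (Euler--Koszul is exact in the resolution direction by flatness of $F_\bullet$) shows that $\mathcal{P}_\bullet$ is a free $D_A$-resolution of $M_A^\beta$.

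Next I would apply $\Hom_{D_A}(-,O_A)$ to $\mathcal{P}_\bullet$. The extension--restriction adjunction gives $\Hom_{D_A}(D_A\otimes_{R_A}F_q,O_A)=\Hom_{R_A}(F_q,O_A)$, so the resulting complex is the total complex of $\bigwedge^\bullet(\kk A)^\ast\otimes_\kk\Hom_{R_A}(F_\bullet,O_A)$ carrying the transposed Koszul differential coming from $E-\beta$. Filtering by the resolution index, the first page of the associated spectral sequence is $\Ext^\bullet_{R_A}(S_A,O_A)$. The key input here is that, in the $A$-graded category and in characteristic zero, $O_A=\kk[x_A]$ is the graded injective hull of the residue field over $R_A=\kk[\del_A]$ (Macaulay inverse systems); hence $\Ext^{>0}_{R_A}(S_A,O_A)=0$, and $\Hom_{R_A}(S_A,O_A)$ is the graded $\kk$-dual $\Hom_\kk(S_A,\kk)$ with reversed grading. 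Only the $0$-th column survives, the spectral sequence degenerates, and $\Ext^\bullet_{D_A}(M_A^\beta,O_A)$ equals the cohomology of the Koszul complex $\bigwedge^\bullet(\kk A)^\ast\otimes_\kk\Hom_\kk(S_A,\kk)$ induced by $E-\beta$.

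Finally I would run the degree-by-degree argument of Lemma \ref{lem-restr-EK}. The module $\Hom_\kk(S_A,\kk)$ is one-dimensional in each $A$-degree $\gamma$ with $-\gamma\in\NN A$ and zero otherwise, and $E_i$ acts there by a scalar that, after tracking the weighted Euler action of $E_i$ on $O_A$ through the transpose, makes $E_i-\beta_i$ vanish precisely in the single degree $\gamma=-\beta$, which carries a nonzero component exactly when $\beta\in\NN A$. In that degree all Koszul differentials are zero and the complex contributes $\bigwedge^\bullet(\kk A)^\ast\cong H_{dR}^\bullet(T_A;\kk)$ (the torus $T_A$ has dimension equal to the number of Euler operators $\rk A$, so the cohomological degrees match); in every other degree the scalars generate the unit ideal and the complex is exact. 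This yields $\Ext^i_{D_A}(M_A^\beta,O_A)=H_{dR}^i(T_A;\kk)$ for $\beta\in\NN A$ and $0$ otherwise.

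The main obstacle I anticipate lies in the last two steps: establishing cleanly that $O_A$ is graded-injective over $R_A$, so that only the $0$-th column of the spectral sequence survives, and pinning down the sign of the scalar by which $E_i$ acts on $\Hom_\kk(S_A,\kk)_\gamma$, so that the critical degree is exactly $\gamma=-\beta$ and the nonvanishing condition is exactly $\beta\in\NN A$. As a consistency check, at $i=0$ this must reproduce the classical fact that $\Hom_{D_A}(M_A^\beta,O_A)$ is one-dimensional precisely when $\beta\in\NN A$ \cite[Prop.~3.4.11]{SST}, matching $H_{dR}^0(T_A;\kk)=\kk$.
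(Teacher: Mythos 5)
Your proposal is correct, but it takes a genuinely different route from the paper. The paper's proof never forms an explicit free $D_A$-resolution of $M_A^\beta$: it first applies holonomic duality, writing $\left(\rmR\Hom_{D_A}(M_A^\beta,O_A)\right)^\vee\simeq\rmR\Hom_{D_A}(O_A,\DD M_A^\beta)\simeq\omega_{R_A}\otimes^L_{D_A}\DD K_\bullet(F_\bullet;E-\beta)$, identifies $\DD$ of the Euler--Koszul complex of a free resolution as the Euler--Koszul complex of $\Hom_{R_A}(F_\bullet,R_A)$ with parameter $-\beta-\eps_A$, and then invokes Theorem \ref{thm-deRham-EK} together with graded local duality and the biduality $\rmR\Hom_{R_A}(\rmR\Hom_{R_A}(S_A,R_A),R_A)\simeq S_A$ to land on $(S_A)_\beta\otimes\bigwedge(\kk A)$. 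You instead resolve $M_A^\beta$ by $\Tot(K_\bullet(F_\bullet;E-\beta))$ (legitimate for $\beta\notin\calE_A$, since $D_A$ is $R_A$-flat and the Euler--Koszul complex then resolves $M_A^\beta$), apply $\Hom_{D_A}(-,O_A)$ directly, and use that in characteristic zero $O_A\simeq\Hom_\kk(R_A,\kk)$ is the graded injective hull of the residue field over $R_A$ to collapse the resolution-direction spectral sequence onto $\Hom_\kk(S_A,\kk)$; the final degree-by-degree Koszul scalar argument (scalars $\alpha_i-\beta_i$ on the piece dual to $(S_A)_\alpha$) is then the same mechanism as Lemma \ref{lem-restr-EK}. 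What your route buys is the complete avoidance of the duality functor $\DD$ --- and hence of the holonomicity-of-duals discussion that the paper had to revisit over general $\kk$ --- and of local cohomology; the cost is the graded-injectivity of $O_A$, which is exactly Macaulay's inverse systems and does hold here (the pairing $\del^{\boldu}\cdot x^{\boldv}$ only involves factorials, invertible in characteristic zero), so the obstacle you flag is not a real one. The paper's route, by contrast, reuses Theorem \ref{thm-deRham-EK} as a black box and makes visible the symmetry between $\Ext^\bullet_{D_A}(M_A^\beta,O_A)$ and $\Ext^\bullet_{D_A}(O_A,M_A^\beta)$; note also that your appeal to graded Matlis duality and the paper's local duality step are ultimately the same duality in different clothing, even though the surrounding architecture differs.
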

(All experiments indicate this
to be true even if $\beta\in\calE_A$.)
\begin{proof}
Write $\tau(-)$ for the transposition $x^\boldu\del^\boldv\mapsto
\del^\boldv (-x)^\boldu$ on $D_A$. Let $F_\bullet$ be an $A$-graded
$R_A$-free resolution of $S_A$ and denote
$\omega_{R_A}=D_A/\del_AD_A[\dim X_A]$.
Then we have the following equalities, where $(-)^\vee$ is the vector
space dual:
\begin{eqnarray*}
\left(\rmR\Hom_{D_A}(M_A^\beta,O_A))\right)^\vee&\stackrel{\text{(a)}}{\simeq}&
\rmR\Hom_{D_A}(O_A,\DD M_A^\beta)\\ 
&\stackrel{\text{(b)}}{\simeq}&\omega_{R_A}\otimes^L_{D_A}\DD M_A^\beta\\ 
&\stackrel{\text{(c)}}{=}&\omega_{R_A}\otimes^L_{D_A}\DD K_\bullet(S_A;E-\beta)\\ 
&\stackrel{\text{(d)}}{=}&\omega_{R_A}\otimes^L_{D_A}\DD K_\bullet(F_\bullet;
E-\beta)\\ 
&\stackrel{\text{(e)}}{=}&\omega_{R_A}\otimes^L_{D_A} K_\bullet(\Hom_{R_A}(F_\bullet,R_A);E+\beta+\eps_A)\\
&\stackrel{\text{}}{\simeq}&\omega_{R_A}\otimes^L_{D_A} K_\bullet(\rmR\Hom_{R_A}(S_A,R_A);E+\beta+\eps_A)\\
&\stackrel{\text{(f)}}{\simeq}&K_\bullet(\omega_{R_A}\otimes^L_{R_A}\rmR\Hom_{R_A}(S_A,R_A);E+\beta+\eps_A)\\
&\stackrel{\text{(g)}}{\simeq}&(\rmR\Gamma_{\del_A}\rmR\Hom_{R_A}(S_A,R_A))_{-\beta-\eps_A}\otimes\bigwedge
(\kk A)\\
&\stackrel{\text{(h)}}{\simeq}&\left((\rmR\Hom_{R_A}(\rmR\Hom_{R_A}(S_A,R_A),R_A))_{\beta}\otimes
\bigwedge (\kk A)\right)^\vee\\
&=&\left((S_A)_\beta\otimes
\bigwedge (\kk A)\right)^\vee.
\end{eqnarray*}
The following notes justify the above transformations:
\begin{enumerate}[label=$\bullet$(\alph*)]
\item Duality gives
  $\rmR\Hom_{D_A}(M,M')\simeq \left(\rmR\Hom_{D_A}(\DD M',\DD M)\right)^\vee$, \cite[\S
  2.6]{HottaTakeuchiTanasaki}.
\item Resolve $O_A$ and dualize the resolution, incurring a
  cohomological shift.
\item By \cite{MMW05}, the hypergeometric
  system is resolved by the Euler--Koszul complex as long as $\beta$
  is not exceptional. 
\item The Euler--Koszul functor can be applied to any $A$-graded
  resolution.
\item $K_\bullet(F_\bullet;E-\beta)$ is a free complex. 
Applying $\Hom_{D_A}(-,D_A)$ and the transposition $\tau$ turns
$D_A\otimes_{R_A}F_\bullet$ into
$D_A\otimes_{R_A}\Hom_{R_A}(F_\bullet,R_A)$ and the Euler--Koszul
complex on $\beta$ into that on $-\beta-\eps_A$ since $x_j\del_j$ turns
into $-\del_jx_j$.
\item As in the proof of Theorem \ref{thm-deRham-EK}.
\item Theorem \ref{thm-deRham-EK} works for $A$-graded complexes just
  as well.
\item Apply local $A$-graded duality (responsible for the dual).
\end{enumerate}
\end{proof}

\section{Three four-term sequences}\label{sec-4term}

\begin{notn}
  From now on, $A$ is a $(d+1)\times (n+1)$ matrix
  and $\NN A$ is assumed to be saturated, in addition to the
  conventions in Notation \ref{ntn-A} and Definition \ref{dfn-A}.

All products of $\kk$-schemes are by default over $\kk$.
\schluss\end{notn}

Consider the exact toric sequence $0\to S_A\del_A\to S_A\to \kk\to
0$. Normality ensures, by \cite[Prop.~5.3, Thm.~6.6]{MMW05},
that the Euler--Koszul functor produces the exact sequence
\eqref{eq-4term-toric},
and, for $i\geq 1$, isomorphisms
\begin{gather}\label{eq-EK-kk}
H_i(\kk;E-\beta) \simeq \begin{cases}O_A^{\binom{d+1}{i}} & \text{for}
  \; \beta = 0; \\ 0 & \text{else.} \end{cases}
\end{gather}

In this section we will show that the sequence \eqref{eq-4term-toric}
has a geometric interpretation when $A$ is homogeneous. Our approach
is inspired by \cite{Stienstra98}, where Stienstra defined on the
torus $T_A$ a family $F$ of Laurent polynomials using the matrix
$A$. He showed that one term in the long exact cohomology sequence of
the pair ($T_A$, fiber of $F$) could be naturally identified with a
fiber in the $A$-hypergeometric system $M_A^0$ when $F$ is smooth. We
will extend this identification to the non-smooth fibers of $F$.

We will proceed as follows. First we identify the second term of
\eqref{eq-4term-toric} as a concatenation of (proper) direct image
functors applied to the structure sheaf $\calO_{T_A}$. The third term
already has such an interpretation by 
Corollary \ref{cor:SW} above. The remaining terms are
identified as the cohomology of the cone of a natural adjunction
morphism between the second and third term.

As a second step we show in Lemma \ref{lem:FLsidelift} that the
sequence \eqref{eq-4term-toric} is part of a long exact sequence
coming from a triangle of elementary $\calD$-modules on the line
$\hat{\AA}^1$.  We also show in Proposition \ref{prp-FL-qplus} that
the Fourier--Laplace transform of \eqref{eq-4term-toric} is induced by
the FL-transformed triangle of elementary $\calD$-modules from Lemma
\ref{lem:FLsidelift}.  This enables us to give a geometric
interpretation of the exact sequence in Theorem \ref{thm-GM-toric} in
terms of Gau\ss--Manin systems of the pair ($T_A$, fiber of $F$) as
alluded to above.

As a preparatory result we begin with an identification of two
functors on certain sheaves.

\subsection{Quasi-equivariant bundles}

Denote $\GG_m$ the scheme of units of $\kk$.  A
\emph{$\GG_m$-action on the variety $Y$} is a multiplicative morphism
$\mu\colon \GG_m\times Y\to Y$ where $1\in\GG_m$ acts as
identity. That is, $\mu$ is a morphism, $\mu(g,\mu(g',y))=\mu(gg',y)$
and $\mu(1,y)=y$.

Let $X$ be an affine smooth variety and $\pi: E = X \times \AA^n \ra
X$ be a trivial vector bundle on $X$. Write $E^\ast = E \setminus (X
\times \{0\})$ and let $E_x$ be the fiber over $x\in X$.
The zero section is identified with $X$ as closed subvariety via the
embedding
\[
i\colon X\into E.
\]
\begin{dfn}
  A $\GG_m$-action $\mu\colon\GG_m\times E\to E$ on $E$ is \emph{fibered} if
  \begin{asparaenum}
  \item $\mu$ preserves fibers, $\mu\colon \GG_m\times E_x\to E_x$;
    \item $\mu$ is the restriction of a morphism $\mu\colon \AA^1\times E\to E$
      under  $\GG_m\into \AA^1$;
    \item $0\in\AA^1$ multiplies into the zero section, $\mu\colon
      0\times E_x\to i(X)$;
    \item $\AA^1$ fixes the zero section, $\mu\colon \AA^1\times
      i(X)\to i(X)$.
  \end{asparaenum}
\schluss\end{dfn}

\begin{dfn}
  Let $\mu: \GG_m \times E \ra E$ be a fibered $\GG_m$-action on $E$.
  Write $\pr\colon \GG_m\times E\to E$ for the projection and denote
  by $\mu'$ and $\pr'$ the restrictions of $\mu$ and $\pr$ to $\GG_m
  \times E^*$.

  A holonomic $\calD_{E}$-module $\calM$ is called
  \emph{$\GG_m$-quasi-equivariant} if $(\mu')^+ \calM_{\mid E^*}
  \simeq (\pr')^+ \calM_{\mid E^*}$.
\schluss\end{dfn}
We consider the derived category of bounded complexes of
$\calD_E$-modules with holonomic and quasi-equivariant cohomology.

\begin{lem}\label{lem:equivpoint}
Let $\pi: E \ra X$ be fibered and denote $i: X \ra E$ the inclusion
of the zero section.  For every $\GG_m$-quasi-equivariant $\calD_E$-module
$\calM$,
\begin{align}
\pi_+ \calM \simeq i^\dag \calM \qquad \text{and} \qquad \pi_\dag \calM \simeq i^+ \calM \notag
\end{align}
\end{lem}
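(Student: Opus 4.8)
The plan is to prove the two isomorphisms in tandem, since they are interchanged by the duality functor $\DD$; thus it suffices to establish one of them, say $\pi_+\calM\simeq i^\dag\calM$. Indeed, the action $\mu'$ and the projection $\pr'$ are smooth of the same relative dimension $\dim\GG_m=1$, so $\DD$ intertwines $(\mu')^+$ and $(\pr')^+$ up to one and the same shift, and since $\DD$ commutes with restriction to the open $E^\ast$ it carries the defining isomorphism $(\mu')^+\calM_{\mid E^\ast}\simeq(\pr')^+\calM_{\mid E^\ast}$ to the analogous one for $\DD\calM$. Hence $\DD\calM$ is again $\GG_m$-quasi-equivariant, and feeding $\DD\calM$ into the relations $\pi_\dag=\DD\pi_+\DD$ and $i^+=\DD i^\dag\DD$ shows that $\pi_\dag\calM\simeq i^+\calM$ is a formal consequence of $\pi_+\calM\simeq i^\dag\calM$. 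I therefore concentrate on the first isomorphism.

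First I would exploit that $i$ is a section of $\pi$, so that $\pi\circ i=\id_X$ and $\pi_+ i_+=\id$. Writing $j\colon E^\ast\hookrightarrow E$ for the open complement of the zero section, I apply $\pi_+$ to the localization triangle $i_+ i^\dag\calM\to\calM\to j_+ j^+\calM\xrightarrow{+1}$ and use $\pi_+ i_+=\id$ together with $\pi_+ j_+=(\pi\circ j)_+$ to obtain a triangle
\[
i^\dag\calM\longrightarrow\pi_+\calM\longrightarrow(\pi\circ j)_+(\calM_{\mid E^\ast})\xrightarrow{\ +1\ }.
\]
The desired isomorphism is then equivalent to the assertion that the natural adjunction map $i^\dag\calM\to\pi_+\calM$ is an isomorphism. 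This is where quasi-equivariance must enter: the argument cannot proceed by a bare vanishing of the boundary term, because already for $X$ a point and $\calM=\calO_{E}$ that boundary term is the (nonzero) de Rham cohomology of $E^\ast$.

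The mechanism I would use to control the boundary is the extended $\AA^1$-action $\mu\colon\AA^1\times E\to E$, which by axioms (iii)--(iv) of a fibered action interpolates between $\id_E$ (at $1\in\AA^1$) and $i\circ\pi$ (at $0\in\AA^1$). The quasi-equivariance isomorphism $(\mu')^+\calM\simeq(\pr')^+\calM$ over $\GG_m\times E^\ast$ says that $\mu^+\calM$ is, away from the zero section, pulled back along $\pr$; I would propagate this comparison over $\AA^1\times E$ and specialize at the endpoints $t=0,1$, so that the regularity encoded in the monodromicity forces the contribution near $t=0$ to match the fixed-point restriction $i^\dag\calM$. It should also be possible to reduce to the fibre over a point, where $E\cong\AA^n$ carries a scaling-type contracting action and the behaviour of monodromic $\calD$-modules is explicit.

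The main obstacle is precisely that $\GG_m$-quasi-equivariance furnishes only the \emph{existence} of an isomorphism on $E^\ast$, with no cocycle datum; in particular one cannot simply descend $\calM_{\mid E^\ast}$ to the projectivization $\PP(E)$ and integrate over the fibres $\GG_m$. The real work is to upgrade this weak datum---through the $\AA^1$-specialization along $\mu$ and the regularity it provides---into the statement that the adjunction morphism $i^\dag\calM\to\pi_+\calM$ is an isomorphism. This is the $\calD$-module incarnation of the contraction (hyperbolic localization) principle for the attracting action defined by $\mu$, and it is the step I expect to be most delicate.
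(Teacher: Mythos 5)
Your duality reduction at the start is fine and agrees with the paper (and your observation that $\DD\calM$ is again quasi-equivariant is a point the paper leaves implicit). But the rest of the proposal has a genuine gap, and moreover starts from the wrong localization triangle. You apply $\pi_+$ to $i_+i^\dag\calM\to\calM\to j_+j^{-1}\calM$, whose boundary term $\pi_+j_+j^{-1}\calM$ you correctly note is nonzero already for $\calM=\calO_E$ over a point; having noticed this, you do not switch to the other triangle but instead sketch an unexecuted ``specialize the comparison along $\mu$ over $\AA^1$'' plan and explicitly defer the key step. The paper instead uses the triangle $j_\dag j^{-1}\calM\to\calM\to i_+i^{\dag}\calM$, i.e.\ with the \emph{shriek} extension $j_\dag$ on the left, precisely because its boundary term $\pi_+j_\dag j^{-1}\calM$ \emph{is} the one that vanishes (for $\calM=\calO_E$ this is the relative de Rham cohomology of $(\AA^n,\{0\})$, which is zero, in contrast to $H_{dR}^\bullet(E^\ast)$). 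The entire content of the lemma is the vanishing $\pi_+j_\dag j^{-1}\calM=0$, and that is exactly what your proposal leaves open.

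The mechanism you are missing is elementary and does not require regularity, monodromic structure theory, or reduction to a single fiber. Since $s(x)=(1,x)$ is a section of $\mu\colon\AA^1\times E\to E$, the composite $\pi_+j_\dag j^{-1}\calM\to\pi_+\mu_+\mu^\dag j_\dag j^{-1}\calM\to\pi_+j_\dag j^{-1}\calM$ is an isomorphism, so it suffices to kill the middle term. Base change over the Cartesian square $\GG_m\times E^\ast=\mu^{-1}(E^\ast)$ gives $\mu^\dag j_\dag j^{-1}\calM\simeq j'_\dag(\mu')^\dag j^{-1}\calM$, quasi-equivariance (in exactly the weak ``existence of an isomorphism'' form you worried about) replaces $(\mu')^\dag j^{-1}\calM$ by $\calO_{\GG_m}\boxtimes j^{-1}\calM$, and since $\pi\circ\mu=\pi\circ\pr$ one gets by K\"unneth
\[
\pi_+\mu_+\mu^\dag j_\dag j^{-1}\calM\simeq \bigl(a_+j_{1\dag}\calO_{\GG_m}\bigr)\boxtimes\bigl(\pi_+j_\dag j^{-1}\calM\bigr),
\]
which vanishes because $a_+j_{1\dag}\calO_{\GG_m}=0$ on $\AA^1$. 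Without this retract-and-K\"unneth step (or an equivalent substitute) your argument does not close.
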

\begin{proof}
By duality, it suffices to prove the first claim.
Denote by $j: E^\ast \ra E$ the open embedding of the complement of
the zero section and let $\pi$ be the projection to the base
$X$.
We
have the exact triangles
\begin{gather}
j_\dag j^{-1} \calM \lra \calM \lra i_+ i^\dag \calM
\overset{+1}{\lra}\\
\label{eqn-pi+j!1/j}
\pi_+ j_\dag j^{-1} \calM \lra \pi_+\calM \lra i^\dag \calM \overset{+1}{\lra}
\end{gather}
and the Cartesian diagram
\[
\xymatrix{\GG_m \times E^* \ar[r]^{j'} \ar[d]_{\mu'} & \AA^1_m \times E
  \ar[d]_\mu \\ E^* \ar[r]^j & E}
\]
where $\mu'$ is the restriction of $\mu$ to $\GG_m \times E^*$ and
$j'$ is the canonical inclusion. The morphism $s: E \ra \AA^1
\times E$ with $s(x) = (1,x)$ is a section of
$\mu$. Thus, the composition (induced by the natural
transformation $\id_E\to\mu_+\mu^\dagger$)
\[
\pi_+ j_\dag j^{-1} \calM \ra \pi_+ \mu_+ \mu^\dag j_\dag j^{-1} \calM   \ra \pi_+ \mu_+ s_+ s^\dag \mu^\dag j_\dag j^{-1} \calM =  \pi_+ j_\dag j^{-1} \calM
\]
is an isomorphism by \eqref{eqn-pi+j!1/j}; hence it is enough to prove
$\pi_+ \mu_+ \mu^\dag j_\dag j^{-1} \calM = 0$. By base change,
\[
\pi_+ \mu_+ \mu^\dag j_\dag j^{-1} \calM \simeq \pi_+ \mu_+ j'_\dag (\mu')^\dag j^{-1} \calM.
\]

Since $\calM$ is $\GG_m$-quasi-equivariant, we have
\[
(\mu')^\dag j^{-1}\calM \simeq \pr^\dag j^{-1}\calM \simeq
\calO_{\GG_m} \boxtimes j^{-1}\calM\, .
\]
Therefore (letting $a\colon \AA^1 \ra
\{pt\}$ be the map to the point) we get
\begin{align*}
\pi_+ \mu_+ j'_\dag(\mu')^\dag j^{-1} \calM &\simeq \pi_+ \mu_+ j'_\dag(\calO_{\GG_m} \boxtimes j^{-1}\calM)  \\
&\simeq \pi_+ \mu_+ (j_{1\dag} \calO_{\GG_m} \boxtimes j_{\dag}j^{-1}\calM) \\
&\simeq \pi_+ \pr_+(j_{1\dag} \calO_{\GG_m} \boxtimes j_{\dag}j^{-1}\calM) \\
&\simeq a_+ j_{1\dag} \calO_{\GG_m} \boxtimes \pi_+ j_{\dag}j^{-1}\calM
\end{align*}
where $j_1 :\GG_m\ra \AA^1$ is the canonical inclusion.
Since $a_+ j_{1\dag} \calO_{\GG_m} =
0$ in $\rmD^b(\calD_{\{pt\}})$ we have $\pi_+ \mu_+\mu^\dag j_\dag j^{-1}
\calM = 0$. 
\end{proof}

Recall that $A$ is a $(d+1)\times(n+1)$ matrix. 
Let $T_A = \Spec(\kk[t_0^\pm,\ldots,t_d^\pm])$ and consider
the ring homomorphism
\begin{align}
\kk[y_0,\ldots,y_n] &\lra \kk[t_0^{\pm},\ldots,t_d^{\pm}] \notag \\
y_i &\mapsto t^{\bolda_i}  \notag
\end{align}
which gives rise to a morphism
\[
h_A: T_A \ra
\hat{\AA}^{n+1},
\]
where $\hat{\AA}^{n+1} = \Spec(\kk[y_0,\ldots,y_n])$.
We factorize this embedding as
\[
T_A \overset{h_1}{\lra} \hat{\AA}^{n+1} \setminus \{0\} \overset{h_2}{\lra} \hat{\AA}^{n+1}\, .
\]
We are now ready to show a useful property of $A$-hypergeometric
systems and their Fourier--Laplace transforms.
\begin{lem}\label{lem:hotequiv}
The $\calD_{\hat{\AA}^{n+1}}$-module $h_{A+} \calO_{T_A}$ is
$\GG_m$-quasi-equivariant.
\end{lem}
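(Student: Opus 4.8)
The plan is to show that the module $h_{A+}\calO_{T_A}$ satisfies the quasi-equivariance condition by exploiting the fact that the $\GG_m$-action on $\hat\AA^{n+1}$ is compatible, via the monomial map $h_A$, with a natural $\GG_m$-action on the torus $T_A$. The key observation is that the scaling action $\mu\colon\GG_m\times\hat\AA^{n+1}\to\hat\AA^{n+1}$ lifts through $h_A$ because the matrix $A$ is homogeneous: since $\boldone_A$ lies in the row span of $A$, there is a cocharacter of $T_A$ whose composition with $h_1$ produces exactly the scaling of all coordinates $y_0,\ldots,y_n$ simultaneously. Concretely, if $\boldone_A=\sum_i c_i(\text{row}_i)$, then the one-parameter subgroup $s\mapsto(s^{c_0},\ldots,s^{c_d})$ of $T_A$ maps under $h_1$ to the diagonal scaling $y_i\mapsto s\cdot y_i$. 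This means the diagram relating the $\GG_m$-action on $T_A$ and the fibered action on $\hat\AA^{n+1}\minus\{0\}$ commutes.

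\emph{First} I would set up the equivariance on the torus: the structure sheaf $\calO_{T_A}$ carries a tautological $\GG_m$-equivariant structure with respect to the cocharacter action just described, simply because $\calO_{T_A}$ is the $\calD$-module pushforward of the constant sheaf and scaling is an automorphism of $T_A$ covering the scaling on the base. \emph{Next}, I would verify that $h_1\colon T_A\to\hat\AA^{n+1}\minus\{0\}$ is equivariant for these two actions, which is precisely the content of the homogeneity computation above. \emph{Then} the quasi-equivariance condition $(\mu')^+\calM_{\mid E^*}\simeq(\pr')^+\calM_{\mid E^*}$ for $\calM=h_{A+}\calO_{T_A}$ follows by base change: using proper base change for $\calD$-module direct images along the Cartesian square relating the two actions on $T_A$ and on $\hat\AA^{n+1}\minus\{0\}$, one rewrites both $(\mu')^+ h_{A+}\calO_{T_A}$ and $(\pr')^+ h_{A+}\calO_{T_A}$ as direct images of the structure sheaf under the two corresponding maps $\GG_m\times T_A\to\GG_m\times(\hat\AA^{n+1}\minus\{0\})$, and the equivariance of $h_1$ identifies these.

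\emph{The main obstacle} will be carefully handling the restriction to the complement $E^*=\hat\AA^{n+1}\minus\{0\}$ and making the base-change argument precise, since $h_{A+}$ is only a \emph{proper} (indeed closed, up to the torus embedding) pushforward after one compactifies or restricts appropriately, and the naive $\GG_m$-action does not fix the origin. The definition of quasi-equivariance deliberately works only over $E^*$, so one must check that the relevant squares remain Cartesian after removing the zero section and that no boundary contributions appear; this is exactly why $h_1$ (landing in $\hat\AA^{n+1}\minus\{0\}$) rather than $h_A$ is the right map to track equivariance. Once the Cartesian square over $E^*$ is in place and the $\GG_m$-equivariance of $h_1$ is established from homogeneity of $A$, the isomorphism $(\mu')^+\calM\simeq(\pr')^+\calM$ is a formal consequence of smooth (flat) base change for $\calD$-modules, so the genuine work is geometric bookkeeping rather than homological computation.
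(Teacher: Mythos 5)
Your argument is correct and follows essentially the same route as the paper: choose compatible $\GG_m$-actions on $T_A$ (via a cocharacter) and on $\hat{\AA}^{n+1}$ (via a positively weighted monomial scaling), observe that $\tilde\mu^+\calO_{T_A}\simeq\pr^+\calO_{T_A}$ tautologically, and transport this through the Cartesian square over $\hat{\AA}^{n+1}\setminus\{0\}$ by base change for $h_{1+}$. Your worry about ``boundary contributions'' is unfounded: base change for $\calD$-module direct images holds for any Cartesian square, and working over $E^*$ with the locally closed embedding $h_1$ is exactly what makes the square Cartesian.

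The one discrepancy worth fixing is the hypothesis you invoke. You derive the compatible cocharacter from \emph{homogeneity} of $A$ (so that $h_1$ intertwines a cocharacter of $T_A$ with the standard scaling $y_j\mapsto s\,y_j$), but the lemma is stated, and used in Proposition \ref{prop:FL4termexseq}, \emph{before} homogeneity is assumed; at that point only pointedness (Notation \ref{ntn-A}) is available. The paper's proof needs only pointedness: pick $\boldu\in\ZZ^{d+1}$ with $\boldv=\boldu^T A$ componentwise positive, act on $T_A$ by $\boldu$ and on $\hat{\AA}^{n+1}$ by the weighted scaling with exponents $\boldv$; positivity of $\boldv$ is what makes the latter a fibered action (it extends over $0\in\AA^1$ into the zero section). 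Your argument goes through verbatim after replacing $\boldone_A$ by such a $\boldv$, so this is a loss of generality rather than a logical error, but as written your proof only covers the homogeneous case.
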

\begin{proof}
We view ${\hat A}^{n+1}$ as trivial bundle over itself.
  Since $A$ is pointed, there is $\boldu\in\ZZ^{d+1}$ with
  $\boldv=\boldu^T\cdot A$ componentwise positive.
  Let $\mu'\colon \GG_m\times
  \hat{\AA}^{n+1}\to\hat{\AA}^{n+1}$ be the monomial action induced by
  $\boldv$ and let $\tilde\mu\colon \GG_m\times T_A\to T_A$ be
  the action induced by $\boldu$. (Compare the discussion on the Euler
  space in \cite{RSW}.) 
Consider the Cartesian diagram
\[
\xymatrix{ \GG_m\times T_A \ar[r]^-{h_1\times id}
  \ar@<0.5ex>[d]^{\tilde{\mu}} \ar@<-0.5ex>[d]_{\pr} &
  \GG_m\times \hat{\AA}^{n+1}\setminus \{0\} \ar@<0.5ex>[d]^{\mu'}
  \ar@<-0.5ex>[d]_{\pr} & \\ T_A \ar[r]^-{h_1} &
  \hat{\AA}^{n+1} \setminus \{0\} \ar[r]^-{h_2} & \hat{\AA}^{n+1}}
\]
and note that the positivity of $\boldv$ allows to extend $\mu'$ to
$\AA^1\times \hat{\AA}^{n+1}$.
Then 
\[
\pr^+ h_{1+} \calO_{T_A} \simeq (h_1 \times id)_+ \pr^+
\calO_{T_A} \simeq (h_1 \times id)_+ \tilde{\mu}^+
\calO_{T_A} \simeq (\mu')^+ h_{1+} \calO_{T_A}
\]
and so $\pr^+ h_{1+}\calO_{T_A} \simeq (\mu')^+
h_{1+}\calO_{T_A}$.
\end{proof}

\subsection{The four-term sequence in terms of direct images}
Recall Notation \ref{ntn-FL-stuff} regarding Fourier--Laplace
transforms and consider the inverse Fourier--Laplace transformation of
the sequence \eqref{eq-4term-toric}:
\begin{gather}\label{eq-4term-toric-FL}
0 \lra H_1(\kk;\hat{E}+\beta) \lra H_0(\hat{S}_A \cdot
y_A;\hat{E}+\beta) \lra \hat{M}^\beta_A \lra H_0(\kk;\hat{E}+\beta)
\lra 0.
\end{gather}

\begin{dfn}
  Let $\calB_{0}$ be the unique simple
  $\calD_{\hat{\AA}^{n+1}}$-module supported in
  $0\in\hat{\AA}^{n+1}$.
\schluss\end{dfn}

\begin{prp}\label{prop:FL4termexseq}
For $\beta=0$ there is an isomorphism of exact 4-term sequences
\[
\xymatrix{0 \ar[r] & \calH_1(\kk;\hat{E}) \ar[r] & \calH_0(
  \hat{S}_A \cdot y_A;\hat E) \ar[r] & \hat{\calM}^0_A \ar[r] &
  \calH_0(\kk;\hat{E}) \ar[r] & 0 \\ 0 \ar[r]& {\calB_0}^{d+1}
  \ar[u]^\simeq \ar[r] & \calH^0(h_{2\dag} h_{1+}
  \calO_{T_A}) \ar[u]^\simeq \ar[r] &
  \calH^0(h_{+}\calO_{T_A}) \ar[u]^\simeq \ar[r] & \calB_0
  \ar[r] \ar[u]^\simeq & 0}
\]
\end{prp}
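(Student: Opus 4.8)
The plan is to realize both rows as shadows of a single recollement triangle on $\hat{\AA}^{n+1}$ and to pin down the four columns one at a time, reserving a direct computation only for the term that cannot be obtained formally. Throughout I take $\beta=0$ (which is not strongly resonant, so Corollary \ref{cor:SW} is available) and write $\calG := h_{1+}\calO_{T_A}$ on the punctured space $\hat{\AA}^{n+1}\setminus\{0\}$; since $h_1$ is an affine toric embedding, $\calG$ is a single $\calD$-module concentrated in degree $0$. Let $j=h_2$ be the open embedding of the punctured space and let $i$ be the inclusion of the origin.

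First I would dispose of the outer columns. By \eqref{eq-EK-kk} the terminal Euler--Koszul homologies of the vertex residue field are $\calH_0(\kk;E)=\calO_A$ and $\calH_1(\kk;E)=\calO_A^{d+1}$; since $\FL^{-1}$ carries $\calO_A$ to the skyscraper $\calB_0$ at the origin, it sends these to $\calB_0$ and $\calB_0^{d+1}$, which are precisely the outer terms of the bottom row. The third vertical arrow is Corollary \ref{cor:SW}: at $\beta=0$ it identifies, at the level of complexes, $\FL^{-1}\calK_\bullet(S_A;E)$ with $h_{2+}h_{1+}\calO_{T_A}=h_{A+}\calO_{T_A}$, and on $\calH^0$ yields $\hat{\calM}_A^0\simeq\calH^0(h_+\calO_{T_A})$.

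The crux is the second column, namely $\FL^{-1}\calH_0(S_A\del_A;E)\simeq\calH^0(h_{2\dag}\calG)$, and I would obtain it as a fibre rather than by a bare computation. The toric presentation $0\to S_A\del_A\to S_A\to\kk\to 0$ exhibits $\calK_\bullet(S_A\del_A;E)$ as the fibre of $\calK_\bullet(S_A;E)\to\calK_\bullet(\kk;E)$, while on the geometric side the recollement triangle $h_{2\dag}\calG\to h_{2+}\calG\to i_+i^+h_{2+}\calG\overset{+1}{\longrightarrow}$ exhibits $h_{2\dag}\calG$ as the fibre of the adjunction $h_{2+}\calG\to i_+i^+h_{2+}\calG$. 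Matching these two fibre sequences under $\FL^{-1}$ reduces everything to identifying the cone $i_+i^+h_{2+}\calG$. Here I would invoke quasi-equivariance: by Lemma \ref{lem:hotequiv} the module $h_{A+}\calO_{T_A}$ is $\GG_m$-quasi-equivariant, so Lemma \ref{lem:equivpoint}, applied with $\pi\colon\hat{\AA}^{n+1}\to\{pt\}$ and zero section $i$, gives $i^+h_{A+}\calO_{T_A}\simeq\pi_\dag h_{A+}\calO_{T_A}$, i.e.\ the (dual) de Rham cohomology of $T_A\cong\GG_m^{d+1}$. Its Betti numbers $\binom{d+1}{\bullet}$ reproduce exactly the ranks in \eqref{eq-EK-kk}, so $i_+i^+h_{2+}\calG$ matches $\FL^{-1}\calK_\bullet(\kk;E)$ compatibly with the adjunction map, contributing $\calB_0$ in degree $0$ (rank $\binom{d+1}{0}=1$) and $\calB_0^{d+1}$ in degree $1$ (rank $\binom{d+1}{1}=d+1$); taking fibres then gives $\FL^{-1}\calK_\bullet(S_A\del_A;E)\simeq h_{2\dag}\calG$ together with its map into $h_{2+}\calG$.

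Finally I would assemble the ladder. The bottom four-term sequence is the long exact cohomology sequence of the recollement triangle, the three identifications above furnish isomorphisms in the first, third and fourth spots compatibly with the horizontal maps, and the second vertical isomorphism is the one just constructed from the fibre comparison; the four-lemma then forces the whole diagram to commute with all verticals isomorphisms. The main obstacle is the cone computation of the preceding paragraph: one must verify not merely that $i^+h_{A+}\calO_{T_A}$ carries the correct torus-cohomology ranks, but that the recollement adjunction corresponds, under $\FL^{-1}$ and the isomorphism of Corollary \ref{cor:SW}, to the Euler--Koszul map induced by $S_A\twoheadrightarrow\kk$. It is exactly this agreement of the two boundary maps that makes the fibres --- and hence the extension classes of the two short exact sequences into which each four-term sequence splits --- coincide, and thereby upgrades the termwise isomorphisms to an isomorphism of four-term sequences.
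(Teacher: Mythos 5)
Your overall architecture (outer columns from \eqref{eq-EK-kk} and Corollary \ref{cor:SW}, middle column from a recollement/localization triangle, quasi-equivariance to control the behaviour at the origin) is close in spirit to the paper's, but the step you yourself flag as ``the main obstacle'' is a genuine gap, and it is precisely the step the paper's proof is organized to avoid. You propose to identify $h_{2\dag}h_{1+}\calO_{T_A}$ with $\FL^{-1}\calK_\bullet(\hat S_A\cdot y_A;\hat E)$ by matching the two fibre sequences over their common middle term, which requires producing an isomorphism $i_+i^\dag h_{A+}\calO_{T_A}\simeq \FL^{-1}\calK_\bullet(\kk;E)$ \emph{making the square with the two adjunction/Euler--Koszul maps commute}. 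Checking that both cones have the Betti numbers $\binom{d+1}{\bullet}$ of the torus does not produce such an isomorphism (cones are not functorial, and a rank count says nothing about the boundary maps), and you give no argument for the required compatibility. Two smaller issues compound this: the cone of $j_\dag j^{-1}\calM\to\calM$ is $i_+i^\dag\calM$, not $i_+i^+\calM$, so via Lemma \ref{lem:equivpoint} the relevant restriction is $i^\dag\simeq\pi_+$ (ordinary de Rham cohomology of $T_A$), not $i^+\simeq\pi_\dag$; the numerology happens to be symmetric for a torus, which hides the slip but does not repair the compatibility question.

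The paper inverts the logic so that no such compatibility ever has to be checked. It first constructs the second vertical arrow \emph{by adjunction}: since $\calK_\bullet(\hat S_A\cdot y_A;\hat E)$ restricted to $\hat\AA^{n+1}\setminus\{0\}$ is $h_{1+}\calO_{T_A}$ (higher Euler--Koszul homology is supported at the origin), one gets a canonical map $h_{2\dag}h_{1+}\calO_{T_A}\to\calK_\bullet(\hat S_A\cdot y_A;\hat E)$ for which the centre and right squares commute by construction. It then proves this map is an isomorphism by showing the third term of the adjunction triangle vanishes, i.e.\ $i^\dag\calK_\bullet(\hat S_A\cdot y_A;\hat E)=0$: quasi-equivariance and Lemma \ref{lem:equivpoint} convert $i^\dag$ into $a_+$, and $a_+$ of this complex is dual to $\rho\,\calK_\bullet(S_A\del_A;E)$, which by Lemma \ref{lem-restr-EK} equals $\bigwedge(\kk A)\otimes_\kk(S_A\del_A)_0=0$ because the maximal graded ideal has no elements of degree $0$. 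Note that the vanishing is applied to the Euler--Koszul complex of $S_A\del_A$, not to $h_{A+}\calO_{T_A}$ (whose restriction to the origin is the full torus cohomology and does \emph{not} vanish). The outer isomorphisms then come for free from exactness of the two rows. If you want to salvage your route, you should replace the ``match the cones'' step by this construct-then-show-the-cone-vanishes argument; as written, the proof is incomplete at its central point.
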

\begin{proof}
By Corollary \ref{cor:SW}, $h_{A+} \calO_{T_A}
\simeq \hat{\calM}^0_A$. By \eqref{eq-EK-kk},
$\calH_i(\kk;\hat E)\simeq
{\calB_0}^{{d+1\choose i}}$.
Restricting to $\hat{\AA}^{n+1} \setminus
\{0\}$ we see that
\[
\calH_0(\hat{S}_A \cdot y_A;\hat E)_{\mid \hat{\AA}^{n+1} \setminus
  \{0\}} \simeq (\hat{\calM}^0_A)_{\mid \hat{\AA}^{n+1} \setminus
  \{0\}} \simeq h_{1+} \calO_{T_A} \qquad \text{in}\quad
\Mod_h(\calD_{\hat{\AA}^{n+1}})\, .
\]
Since $\calH_{>0}(S_A \cdot y_A;\hat E)_{\mid \hat{\AA}^{n+1}
  \setminus \{0\}} = 0$, we have
\[
\calH_0(S_A \cdot y_A;\hat E)_{\mid \hat{\AA}^{n+1} \setminus \{0\}}
\simeq \calK_\bullet(S_A \cdot y_A;\hat E)_{\mid \hat{\AA}^{n+1}
  \setminus \{0\}} \qquad \text{in} \quad
\rmD^b_h(\calD_{\hat{\AA}^{n+1}})\, .
\]
By adjunction this gives a morphism  
\[ 
h_{2 \dag}h_{1+} \calO_{T_A} \overset{\simeq}\lra h_{2 \dag}
h_{2}^{-1}\calK_\bullet(\hat{S}_A \cdot y_A;\hat E) \lra
\calK_\bullet(\hat{S}_A \cdot y_A;\hat E)
\]
and so induces a morphism
$\calH^0 (h_{2 \dag}h_{1+} \calO_{T_A}) \lra
\calH_0(\hat{S}_A \cdot y_A;\hat E)$
such that the center and right squares in our diagram commute. We
need to prove that the morphism
\begin{equation}\label{eq:adjEuKos}
h_{2 \dag} h_2^{-1} \calK_\bullet(\hat{S}_A \cdot
y_A;\hat E) \lra \calK_\bullet(\hat{S}_A \cdot y_A;\hat E)
\end{equation}
is an isomorphism. (While we know that $\calH_1(\kk;\hat E)$ and
${\calB_0}^{d+1}$ are isomorphic, it is not yet clear that 
$\calH^0 (h_{2\dag} h_{1+}
\calO_{T_A}) \ra \calH_0( \hat{S}_A \cdot
y_A;\hat E)$ induces such isomorphism.)

In order to prove that the morphism \eqref{eq:adjEuKos} is an
isomorphism we have to show that the third term in the adjunction
triangle
\begin{equation}\label{eq:adjEuKosTriangle}
h_{2 \dag} h_2^{-1} \calK_\bullet(\hat{S}_A \cdot y_A;\hat E) \lra
\calK_\bullet(\hat{S}_A \cdot y_A;\hat E) \lra i_+ i^{\dag}
\calK_\bullet(\hat{S}_A \cdot y_A;\hat E) \overset{+1}\lra
\end{equation}
vanishes. By Kashiwara equivalence it is enough to show that $i^{\dag}
\calK_\bullet(\hat{S}_A \cdot y_A;\hat E)$ is isomorphic to zero.
Since $\calK_\bullet(\widehat{S}_A \cdot y_A;\hat E)_{\mid
  \hat{\AA}^{n+1} \setminus \{0\}} \simeq (h_{A+} \calO_T)_{\mid
  \hat{\AA}^{n+1} \setminus \{0\}}$, the complex $\calK_\bullet(\widehat{S}_A
\cdot y_A;\hat E)$ is $\GG_m$-quasi-equivariant.  By Lemma
\ref{lem:equivpoint}, $i^\dag \calK_\bullet(\widehat{S}_A \cdot
y_A;\hat E) \simeq a_+ \calK_\bullet(\widehat{S}_A \cdot y_A;\hat E)$
where $a$ is the map to a point. Now $a_+ \calK_\bullet(\widehat{S}_A
\cdot y_A;\hat E)$ is dual to $\rho\calK_\bullet(S_A\del_A;E)$ which
allows us to use Lemma \ref{lem-restr-EK} to conclude.
\end{proof}

\subsection{The four-term sequence with Gau\ss--Manin systems}

\begin{notn}
  From now on, in addition to the assumptions in Notation \ref{ntn-A}
  and Definition \ref{dfn-A} as
  well as normality, we assume that the matrix $A$ is
  homogeneous, \emph{i.e.} that $(1,\ldots,1)$ is in the row span of $A$.

  Furthermore, for the remainder of the paper, $\beta=0$.
\schluss\end{notn}
One may put $A$ into the following shape by elementary row
operations
\begin{equation}\label{def:Atilde}
A = (\bolda_1, \ldots, \bolda_n) =\left(\begin{matrix}
  1 &  1 & \ldots & 1 \\
  0 &  a_{11} & \dots & a_{1n} \\
  \vdots & \vdots & &\vdots \\
  0 & a_{d1} & \dots &a_{dn}\end{matrix} \right) =
\left(\begin{matrix}
  1 &  1 & \ldots & 1\\
  0 &   &  &  \\
  \vdots & & B & \\
  0 &  &  &\end{matrix} \right)
\end{equation}
where $B= (\boldb_1,\ldots,\boldb_n)$ is the $d \times
n$-matrix with entries $(a_{ij})_{1\leq i \leq d, 1 \leq j\leq n}$.

Using this homogeneity assumption, we will here give a geometric
interpretation to our 4-term sequence \eqref{eq-4term-toric}.  For
this we will need a variant of a comparison theorem of d'Agnolo and
Eastwood \cite{AE}, between the Radon and Fourier--Laplace transform,
and several other preparatory statements.

Set $T_B := \Spec(\kk[t_1^\pm,\ldots,t_d^\pm])$ and $\hat{\AA}^1 :=
\Spec(\kk[t_0])$.  We will identify $T_A$ with $T_B \times (\hat{\AA}^1
\setminus \{0\})$. From the ring homomorphism
\begin{align}
\kk[y_0,\ldots,y_n] &\lra \kk[t_0,t_1^\pm,\ldots,t_d^\pm]\notag
\\ (y_0,\ldots,y_n) &\mapsto (t_0,t_0 t^{\boldb_1},\ldots,t_0
t^{\boldb_n}) \notag
\end{align}
we get a map
\begin{gather}\label{eq-map-k}
k\colon T_B \times \hat{\AA}^1 \lra \hat{\AA}^{n+1}
\end{gather}
whose restriction to $T_A$ is just our
old morphism $h_A$. Let $\tilde{k}$ be the closed embedding
\[
\tilde{k}:= (id_{T_B} \times k) :T_B
\times \hat{\AA}^1 \ra T_B \times \hat{\AA}^{n+1},
\]
let $j,i$ be the embedding and inclusion
\[
j\colon T_A=T_B \times (\hat{\AA}^1
\setminus \{0\})\ra T_B \times \hat{\AA}^1,\qquad
i\colon T_B \times \{0\} \ra T_B \times \hat{\AA}^1.
\]
Then there is a commutative diagram
\[
\xymatrix{\{0\} \ar[d]_{i_0} & \ar[l]_a T_B \ar[d]_i \ar[dr]^{k_0} & \\ 
\hat{\AA}^1  &  T_B \times \hat{\AA}^1 \ar[r]^k \ar[l]_{p} & \hat{\AA}^{n+1} \\ 
\hat{\AA}^1 \setminus\{0\} \ar[u]^{j_0}& T_A \ar[l]_{~p'} \ar[ur]^{h_A} \ar[u]^j \ar[r]^{h_1~} & \hat\AA^{n+1}\setminus \{0\} \ar[u]_{h_2}}
\]
where $p\colon T_B \times \hat{\AA}^1 \ra \hat{\AA}^1$ is the projection and
$k_0$ sends $T_B$ to the origin. Define the following $\calD$-modules on
$\hat{\AA}^1$:
\[
 \calD_{\hat{\AA}^1} \bullet 1 := \calD_{\hat{\AA}^1}/(\del_t), \qquad
 \calD_{\hat{\AA}^1} \bullet \frakH := \calD_{\hat{\AA}^1}/(t \del_t),
 \]
 \[
 \calD_{\hat{\AA}^1} \bullet 1/t := \calD_{\hat{\AA}^1}/(\del_t t),
 \qquad \calD_{\hat{\AA}^1} \bullet \delta := \calD_{\hat{\AA}^1} / (t).
 \]
 (The module $\calD_{\hat{\AA}^1} \bullet \frakH$ encodes the
 Heaviside distribution).
\begin{lem}\label{lem:FLsidelift}

We have the following isomorphisms:
\begin{align}
k_+ \calO_{T_B \times \hat{\AA}^1} &\simeq k_+ p^+(\calD_{\hat{\AA}^1}
\bullet 1), & h_{A+} \calO_{T_A} &\simeq
k_+p^+(\calD_{\hat{\AA}^1} \bullet 1/t), \notag \\ k_{0+}\calO_{T_B} &\simeq
k_+ p^+ (\calD_{\hat{\AA}^1} \bullet \delta), & h_{2 \dag} h_{1+}
\calO_{T_A} &\simeq k_+ p^+(\calD_{\hat{\AA}^1} \bullet \frakH).
\notag
\end{align}
The adjunction morphism $h_{2 \dag} h_{1+} \calO_{T_A} \ra
h_{A+} \calO_{T_A}$ is induced by the adjunction morphism
$\calD_{\hat{\AA}^1}\bullet \frakH \ra \calD_{\hat{\AA}^1}\bullet 1/t$.
\end{lem}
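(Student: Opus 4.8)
The plan is to recognize the four cyclic $\calD_{\hat{\AA}^1}$-modules as the standard objects attached to the stratification $\hat{\AA}^1=(\hat{\AA}^1\setminus\{0\})\sqcup\{0\}$, and then to transport them to $\hat{\AA}^{n+1}$ by applying $k_+p^+$ and invoking base change along the big commutative diagram. Concretely, $\calD_{\hat{\AA}^1}\bullet 1=\calD_{\hat{\AA}^1}/(\del_t)=\calO_{\hat{\AA}^1}$; the generator $1/t$ of the localization $\calO_{\hat{\AA}^1}[1/t]$ is killed by $\del_t t$, so $\calD_{\hat{\AA}^1}\bullet 1/t=j_{0+}\calO_{\hat{\AA}^1\setminus\{0\}}$; taking formal adjoints, $\DD(\calD_{\hat{\AA}^1}/\calD_{\hat{\AA}^1}\del_t t)=\calD_{\hat{\AA}^1}/\calD_{\hat{\AA}^1}t\del_t$, so $\calD_{\hat{\AA}^1}\bullet\frakH=\DD(\calD_{\hat{\AA}^1}\bullet 1/t)=j_{0\dag}\calO_{\hat{\AA}^1\setminus\{0\}}$; and $\calD_{\hat{\AA}^1}\bullet\delta=\calD_{\hat{\AA}^1}/(t)=i_{0+}\calO_{\{0\}}$. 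I will use these identifications throughout.

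For three of the four isomorphisms I would simply combine base change over the Cartesian squares in the diagram with functoriality of $(-)_+$. Since $p$ is smooth, smooth base change yields $p^+j_{0+}\simeq j_+p'^+$, $p^+j_{0\dag}\simeq j_\dag p'^+$, and (as $i_0$ is proper) $p^+i_{0+}\simeq i_+a^+$; using $p'^+\calO=\calO_{T_A}$ and $a^+\calO=\calO_{T_B}$ this gives $p^+(\calD_{\hat{\AA}^1}\bullet 1/t)=j_+\calO_{T_A}$, $p^+(\calD_{\hat{\AA}^1}\bullet\frakH)=j_\dag\calO_{T_A}$, $p^+(\calD_{\hat{\AA}^1}\bullet\delta)=i_+\calO_{T_B}$ and $p^+(\calD_{\hat{\AA}^1}\bullet 1)=\calO_{T_B\times\hat{\AA}^1}$. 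Applying $k_+$ and using $k\circ j=h_A$ and $k\circ i=k_0$ gives at once $k_+p^+(\calD_{\hat{\AA}^1}\bullet 1/t)=h_{A+}\calO_{T_A}$, $k_+p^+(\calD_{\hat{\AA}^1}\bullet\delta)=k_{0+}\calO_{T_B}$, and $k_+p^+(\calD_{\hat{\AA}^1}\bullet 1)=k_+\calO_{T_B\times\hat{\AA}^1}$, which are three of the four claims.

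The only substantial point is the remaining identification $k_+j_\dag\calO_{T_A}\simeq h_{2\dag}h_{1+}\calO_{T_A}$. The square with vertices $T_A,\ T_B\times\hat{\AA}^1,\ \hat{\AA}^{n+1}\setminus\{0\},\ \hat{\AA}^{n+1}$ and edges $j,k,h_1,h_2$ is Cartesian, and $h_2$ is an open embedding, so smooth base change gives $h_2^+k_+j_\dag\calO_{T_A}\simeq h_{1+}j^+j_\dag\calO_{T_A}=h_{1+}\calO_{T_A}$. Hence the recollement triangle for the open $h_2$ and the origin $\zeta\colon\{0\}\hookrightarrow\hat{\AA}^{n+1}$ reads $h_{2\dag}h_{1+}\calO_{T_A}\to k_+j_\dag\calO_{T_A}\to\zeta_+\zeta^\dag(k_+j_\dag\calO_{T_A})\overset{+1}{\lra}$, and produces the desired natural map, an isomorphism precisely when $\zeta^\dag(k_+j_\dag\calO_{T_A})=0$. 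This vanishing is \emph{the main obstacle}: since $k$ is not proper (its fibre over the origin is all of $T_B$), the $*$-restriction $\zeta^\dag$ cannot be computed by naive base change. I would instead argue as in Lemmas \ref{lem:hotequiv} and \ref{lem:equivpoint}. The module $k_+j_\dag\calO_{T_A}=k_+p^+(\calD_{\hat{\AA}^1}\bullet\frakH)$ is the pushforward along the equivariant map $k$ of an equivariant module, hence $\GG_m$-quasi-equivariant, so Lemma \ref{lem:equivpoint} (viewing $\hat{\AA}^{n+1}$ as a trivial bundle over a point with zero section $\zeta$) identifies $\zeta^\dag(k_+j_\dag\calO_{T_A})$ with $b_+(k_+j_\dag\calO_{T_A})$ for $b\colon\hat{\AA}^{n+1}\to\{pt\}$. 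Now $b\circ k$ is the constant map and $j_\dag\calO_{T_A}=\calO_{T_B}\boxtimes(\calD_{\hat{\AA}^1}\bullet\frakH)$, so Künneth reduces the computation to $H^\bullet_{dR}(\hat{\AA}^1,\calD_{\hat{\AA}^1}\bullet\frakH)$; as $\del_t$ acts bijectively on $\calD_{\hat{\AA}^1}/\calD_{\hat{\AA}^1}t\del_t$, this de Rham cohomology vanishes, giving the crux.

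Finally, for the statement on the adjunction morphism I would observe that $\calD_{\hat{\AA}^1}\bullet\frakH\to\calD_{\hat{\AA}^1}\bullet 1/t$ is the canonical arrow $j_{0\dag}\calO\to j_{0+}\calO$ from the shriek to the star extension. Applying $k_+p^+$ and using that the base change isomorphisms above are natural with respect to the transformation $(-)_\dag\to(-)_+$, this arrow is carried to $k_+j_\dag\calO_{T_A}\to k_+j_+\calO_{T_A}$, that is, to $h_{2\dag}h_{1+}\calO_{T_A}\to h_{2+}h_{1+}\calO_{T_A}=h_{A+}\calO_{T_A}$ under the identifications already in place. Since the latter map is exactly the adjunction morphism (the unit $h_{2\dag}\to h_{2+}$ applied to $h_{1+}\calO_{T_A}$), the asserted compatibility is a formal consequence of naturality.
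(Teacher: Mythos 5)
Your proposal is correct and follows essentially the same route as the paper: the first three isomorphisms by base change, and the fourth by reducing to the vanishing of the $\dag$-restriction of $k_+ j_\dag \calO_{T_A}$ at the origin via quasi-equivariance (Lemmas \ref{lem:hotequiv} and \ref{lem:equivpoint}). The only (harmless) divergence is at the very last step: the paper applies Lemma \ref{lem:equivpoint} a second time to the bundle $T_B\times\hat{\AA}^1\to T_B$ so as to reduce to $i^\dag j_\dag\calO_{T_A}=0$, whereas you compute $b_+k_+j_\dag\calO_{T_A}$ directly by K\"unneth from the vanishing of $a_+j_{0\dag}\calO_{\GG_m}$ (a fact the paper itself uses in proving Lemma \ref{lem:equivpoint}); both are valid.
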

\begin{proof}
The first three isomorphisms follow from
\begin{align}
 k_+ \calO_{T_B \times \hat{\AA}^{1}} &\simeq k_+ p^+
 \calO_{\hat{\AA}^1} = k_+ p^+ (\calD_{\hat{\AA}^1} \bullet 1)\notag,
 \\ h_{A+} \calO_{T_A} &\simeq k_+ j_+ \calO_{T_A}
 \simeq k_+ j_+ {p'}^+ \calO_{\hat{\AA}^1 \setminus \{0\}} \simeq k_+ p^+
 j_{0+}\calO_{\hat{\AA}^1 \setminus \{0\}} \simeq k_+ p^+
 (\calD_{\hat{\AA}^1} \bullet 1/t), \notag \\ k_{0+} \calO_{T_B} &\simeq k_+
 i_+ \calO_{T_B} \simeq k_+ i_+ a^+ \calO_{\{0\}} \simeq k_+ p^+ i_0
 \calO_{\{0\}} \simeq k_+ p^+ (\calD_{\hat{\AA}^1} \bullet \delta).
 \notag
\end{align}
For the last one we have 
\[
k_+ p^+ (\calD_{\hat{\AA}^1} \bullet \frakH) \simeq k_+ p^+ j_{0\dag}
\calO_{\hat{\AA}^1 \setminus \{0\}} \simeq k_+ j_\dag {p'}^+
\calO_{\hat{\AA}^1 \setminus \{0\}} \simeq k_+ j_\dag
\calO_{T_A}.
\]
So it remains to prove that $k_+ j_\dag \calO_{T_A} \simeq
h_{2\dag}h_{1+} \calO_{T_A}$. For this consider the diagram
with Cartesian squares
\[
\xymatrix{T_B \ar[d]^i \ar[r]^{p_0} & \{0\} \ar[d]_{i_0} \\ T_B \times
  \AA^1 \ar@/^/[u]^p\ar[r]^k & \AA^{n+1}
  \ar@/_/[u]_\pi\\ T_A \ar[u]_j \ar[r]^{h_1} &
  \AA^{n+1}\setminus \{0\} \ar[u]_{h_2}}
\]
Base change in the lower square gives 
$h_{1+} \calO_{T_A} \stackrel{\simeq}{\ra}\, h_{1+} j^{-1}j_\dag
\calO_{T_A} \stackrel{\simeq}{\ra} h^{-1}_2 k_+
j_{\dag}\calO_{T_A}$. Adjunction yields a morphism $h_{2
  \dag} h_{1+} \calO_{T_A} \ra k_+ j_{\dag}
\calO_{T_A}$. In order to prove that this is an isomorphism,
it is hence enough to show that $h_{2\dag}h_{1+}\calO_{
  T_A}\simeq h_{2\dag}h_2^{-1} k_+ j_\dag
\calO_{T_A} \ra k_+ j_\dag \calO_{T_A}$ is an
isomorphism. Using the triangle
\[
h_{2 \dag}h_2^{-1} k_+ j_\dag \calO_{T_A} \lra k_+ j_\dag
\calO_{T_A} \lra i_{0+}i_0^\dag k_+ j_\dag
\calO_{T_A} \overset{+1}\lra\,,
\]
it remains to show that $i_{0+}i_0^\dag k_+ j_\dag \calO_{T_A}$ is
zero. For this we observe that 
\[
h_2^+ k_+ j_\dag \calO_{T_A} \simeq h_{1+} j^+ j_\dag
\calO_{T_B} \simeq h_{1+} \calO_{T_B}
\]
is the restriction of a quasi-equivariant module. 
This shows, via Lemma \ref{lem:hotequiv}, that $k_+ j_\dag
\calO_{T_A}$ is $\GG_m$-quasi-equivariant. We therefore have
\[
i_{0+}i_0^\dag k_+ j_\dag \calO_{T_A} \simeq i_{0+} \pi_+
k_+ j_\dag \calO_{T_A} \simeq i_{0+}p_{0+} p_+j_\dag
\calO_{T_A} \simeq i_{0+}p_{0+} i^{\dag} j_\dag \calO_{T_A}\,,
\]
using Lemma \ref{lem:equivpoint} to substitute $p_+$ by $i^!$.
Since $i^\dag j_\dag \calO_{T_A}$ is zero, the claim
follows.
\end{proof}
Consider the diagram
\[
\xymatrix{\AA^1 & T_B \times \AA^{n+1} \ar[l]_-F \ar[r]^-q & \AA^{n+1}}
\]
where $q$ is the projection and 
\[
F(t_1,\ldots,t_n,\lambda_0,\ldots,\lambda_n)
= \lambda _0 + \sum_{i=1}^n \lambda_i t^{\boldb_i}.
\]
Denote $\Gamma=\Var(F)$ and write
\[
i_\Gamma\colon \Gamma \subset T_B \times \AA^{n+1},\quad j_U\colon U
\ra T_B \times \AA^{n+1}
\]
for the inclusion of $\Gamma$ and its complement $U$. The
Gau\ss--Manin system $q_{U+}\calO_U$ is of interest since it carries a
mixed Hodge structure by Saito's work in \cite{Saito-MHM-Kyoto90}.
Our article gives evidence to our belief that many $D$-modules
arising from Euler--Koszul complexes also carry such structure, and
that they relate to interesting geometric information.

\begin{prp}\label{prp-FL-qplus}
With $u = 1,\delta, 1/t, \frakH$ and $\hat{u} = \delta,1,\frakH,1/t$,
and with $k$ as in \eqref{eq-map-k} we have the following isomorphisms
\[
\FL(k_+ p^+ (\calD_{\hat{\AA^1}} \bullet \hat{u})) \simeq q_+ F^+
(\calD_{\AA^1} \bullet u). 
\]
\end{prp}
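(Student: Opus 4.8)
The plan is to identify both sides of the claimed isomorphism with one and the same object on the product $G:=T_B\times\hat{\AA}^1\times\AA^{n+1}$, namely $Q_+(\pi_0^+N\otimes\calE)$, where $N:=\calD_{\hat{\AA}^1}\bullet\hat u$, the maps $\pi_0\colon G\to\hat{\AA}^1$ and $Q\colon G\to\AA^{n+1}$ are the projections, and $\calE:=\calO_G\,e^{t_0F}$ is the rank-one integrable connection attached to the regular function $t_0\cdot F=t_0(\lambda_0+\sum_{i=1}^n\lambda_i t^{\boldb_i})$. Everything is driven by the kernel identity $\langle k(t,t_0),\lambda\rangle=\sum_{i=0}^n(t_0t^{\boldb_i})\lambda_i=t_0\,F(t,\lambda)$ (with $t^{\boldb_0}=1$), which says that the Fourier kernel in the $y$-variables, pulled back along $k$, is exactly the $F$-pullback of the one-dimensional kernel $e^{t_0s}$ on $\hat{\AA}^1\times\AA^1$.

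For the left-hand side I would expand $\FL(k_+p^+N)=P_{2+}(P_1^+k_+p^+N\otimes\calL)$ by the definition of the Fourier--Laplace functor. Writing $g:=k\times\id_{\AA^{n+1}}\colon G\to\hat{\AA}^{n+1}\times\AA^{n+1}$ and $a\colon G\to T_B\times\hat{\AA}^1$ for the projection, the square with top map $g$, bottom map $k$ and vertical projections $a$ and $P_1$ is Cartesian with $P_1$ a smooth projection, so base change gives $P_1^+k_+p^+N\simeq g_+a^+p^+N=g_+\pi_0^+N$, since $p\circ a=\pi_0$. The projection formula for the integrable connection $\calL$, together with the kernel identity (which reads $g^{\circ}\calL\simeq\calE$ at the level of $\calO$-modules), then yields $g_+\pi_0^+N\otimes\calL\simeq g_+(\pi_0^+N\otimes\calE)$. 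Applying $P_{2+}$ and using $P_2\circ g=Q$ gives $\FL(k_+p^+N)\simeq Q_+(\pi_0^+N\otimes\calE)$.

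For the right-hand side I would first record the elementary one-dimensional identity $\FL^{\AA^1}(\calD_{\hat{\AA}^1}\bullet\hat u)\simeq\calD_{\AA^1}\bullet u$, checked directly on the four generators: under $\del_t\mapsto s$, $t_0\mapsto-\del_s$ the relations $\del_t,\ t,\ t_0\del_t,\ \del_t t_0$ go to $s,\ \del_s,\ \del_s s,\ s\del_s$, interchanging $1\leftrightarrow\delta$ and $\frakH\leftrightarrow1/t$ in precisely the stated pairing (the sign of the kernel being fixed by this computation). Hence $\calD_{\AA^1}\bullet u\simeq\FL^{\AA^1}(N)=r_{2+}(r_1^+N\otimes\calO\,e^{t_0s})$, with $r_1,r_2$ the projections of $\hat{\AA}^1\times\AA^1$. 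Now $F$ is a submersion (as $\del F/\del\lambda_0=1$), so the square with top map $\tilde F\colon G\to\hat{\AA}^1\times\AA^1$, $(t,t_0,\lambda)\mapsto(t_0,F)$, bottom map $F$, and vertical projections $b\colon G\to T_B\times\AA^{n+1}$ and $r_2$ is Cartesian with smooth base; base change and the projection formula give $F^+\FL^{\AA^1}(N)\simeq b_+\tilde F^+(r_1^+N\otimes\calO\,e^{t_0s})\simeq b_+(\pi_0^+N\otimes\calE)$, using $r_1\circ\tilde F=\pi_0$ and $\tilde F^{\circ}(\calO\,e^{t_0s})\simeq\calE$. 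Composing with $q_+$ and using $q\circ b=Q$ produces $q_+F^+(\calD_{\AA^1}\bullet u)\simeq Q_+(\pi_0^+N\otimes\calE)$, the same object, and the two sides coincide.

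I expect the work to be bookkeeping rather than ideas. The two base-change steps hold for the Cartesian squares involved; in each, the morphism one pulls back along ($P_1$, resp.\ the submersion $F$ with $\del F/\del\lambda_0=1$) is smooth, so one lands in the comfortable non-characteristic case and need not invoke properness, which fails here since neither $k$ nor $q$ is proper. The genuinely delicate input is the single one-dimensional computation fixing the pairing $u\leftrightarrow\hat u$ and, with it, the correct sign of the Fourier kernel ($\calL$ versus $\overline{\calL}$); once that is pinned down, the remaining manipulations---functoriality of $(-)^+$ absorbing the cohomological shifts, the projection formula for the exponential connections, and the naive pullbacks $g^{\circ}\calL\simeq\calE\simeq\tilde F^{\circ}(\calO\,e^{t_0s})$---are formal and identical on the two sides.
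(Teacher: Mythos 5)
Your argument is correct and is essentially the paper's own proof: both pass through the triple product $T_B\times\hat{\AA}^1\times\AA^{n+1}$, use base change over the same two Cartesian squares, apply the projection formula to the exponential kernel via the identity $\langle k(t,t_0),\lambda\rangle=t_0F(t,\lambda)$, and reduce to the one-dimensional Fourier--Laplace transform of the four elementary modules. The only differences are presentational: you meet in the middle at $Q_+(\pi_0^+N\otimes\calE)$ rather than writing a single chain of isomorphisms, and you spell out the one-dimensional computation $\FL(\calD_{\hat{\AA}^1}\bullet\hat{u})\simeq\calD_{\AA^1}\bullet u$ that the paper leaves implicit in its last step.
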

\begin{proof}
Consider the diagram
\[
\xymatrix@C=4pc@R=3pc{ \hat{\AA}^1 & T_B \times \hat{\AA}^1 \ar[l]_{p}
  \ar[r]^{k} & \hat{\AA}^{n+1} \\ \hat{\AA}^1 \times \AA^1
  \ar[u]_{p_1} \ar[d]_{p_2} & T_B \times \hat{\AA}^1 \times \AA^{n+1}
  \ar[u]_{p_{12}} \ar[l]_{id_{\hat{\AA}^1} \times F} \ar[d]^{p_{13}}
  \ar[r]^{k\times id_{\AA^{n+1}}} & \hat{\AA}^{n+1} \times \AA^{n+1}
  \ar[u]_{q_{1}} \ar[d]^{q_{2}} \\ \AA^1 & T_B \times \AA^{n+1} \ar[l]_F
  \ar[r]^q& \AA^{n+1}}
\]
where $p_{ij}$ are the projections to the factors $i$ and $j$.  Recall
the Fourier--Laplace sheaf $\calL$ on $\AA^{n+1}$ from Definition
\ref{dfn-FL} and denote $\calL_1$ the Fourier--Laplace sheaf on
$\AA^1\times\AA^1$.  Then
\begin{align*}
  \FL(k_+p^+ (\calD_{\hat{\AA}^1}\bullet \cdot \hat{u})) &= q_{2+}\left((q_1^+
  k_+ p^+(\calD_{\hat{\AA}^1} \bullet \hat{u})) \otimes
  \calL\right)\\
  &\simeq q_{2+}((k\times id)_+\; p_{12}^+\; p^+
  (\calD_{\hat{\AA}^1} \bullet \hat{u}) \otimes \calL) \\
  &\simeq q_{2+}(k
  \times id)_+(p_{12}^+ p^+(\calD_{\hat{\AA}^1} \bullet \hat{u}) \otimes
  (k \times id)^+\calL)\\
  &\simeq q_+ p_{13+}((id \times F)^+ p_1^+
  (\calD_{\hat{\AA}^1} \bullet \hat{u}) \otimes (k \times
  id)^+\calL)\\
  &\simeq q_+ p_{13+}((id \times F)^+ p_1^+
  (\calD_{\hat{\AA}^1} \bullet \hat{u}) \otimes (id \times
  F)^+\calL_1)\\
  &\simeq q_+ p_{13+}(id \times F)^+( p_1^+
  (\calD_{\hat{\AA}^1} \bullet \hat{u}) \otimes \calL_1) \\
  &\simeq q_+
  F^+ p_{2+} p_1^+ (\calD_{\hat{\AA}^1} \bullet \hat{u}) \otimes
  \calL_1)\\
  &\simeq q_+ F^+ (\calD_{\AA^1} \bullet u).
\end{align*}

\end{proof}
Now consider the diagram
\[
\xymatrix{\{0\} \ar[d]_{i_0} & \Gamma \ar[l]_a \ar[d]^{i_\Gamma}
  \ar[dr]^{q_\Gamma} & \\ \AA^1 & T_B \times \AA^{n+1} \ar[l]_F \ar[r]^q
  & \AA^{n+1} \\ \AA^1 \setminus \{0\} \ar[u]^{j_0} & U \ar[u]_{j_U}
  \ar[l]_<<<<<<{F|_U} \ar[ur]_{q_U} &}
\]
where $q$ is the projection. We have, writing $F$ for $F_U$,
\begin{align*}
q_+ F^+ (\calD_{\AA^1} \bullet 1/t) &\simeq q_+ F^+ j_{0+} \calO_{\AA^1
  \setminus \{0\}} \simeq q_+ j_{U +} F^+ \calO_{\AA^1 \setminus
  \{0\}} \simeq q_{U +} \calO_U, \\
q_+ F^+ (\calD_{\AA^1} \bullet
\frakH) &\simeq q_+ F^+ j_{0\dag} \calO_{\AA^1 \setminus \{0\}} \simeq
q_+ j_{U \dag} F^+ \calO_{\AA^1 \setminus \{0\}} \simeq q_+ j_{U \dag}
\calO_U, \\
q_+F^+ (\calD_{\AA^1}\bullet \delta) &\simeq q_+ F^+ i_{0+}
\calO_{\{0\}}\simeq q_+ i_{\Gamma +}a^+ \calO_{\{0\}} \simeq q_+
i_{\Gamma +} \calO_\Gamma \simeq q_{\Gamma+} \calO_\Gamma, \\
q_+ F^+(\calD_{\AA^1} \bullet 1) &\simeq q_+ \calO_{T_B \times \AA^{n+1}},
\end{align*}
where the second isomorphism in the second line follows from the
smoothness of $F$.

\begin{ntn}
  If $W$ is a $\kk$-space (for example, $H^i_{dR}(T_B;\kk)$) then
  $\underline W$ denotes the trivial vector bundle $W \otimes_\kk
  \calO_{\AA^{n+1}}$.
\schluss\end{ntn}

Consider the following exact sequence of $\calD_{\hat{\AA}^1}$-modules
\begin{align}\label{eq-I}
0 \lra \calD_{\hat{\AA}^1} \bullet \delta \lra \calD_{\hat{\AA}^1} \bullet
\frakH \lra \calD_{\hat{\AA}^1} \bullet 1/t \lra \calD_{\hat{\AA}^1} \bullet
\delta \lra 0
\end{align}

\begin{thm}\label{thm-GM-toric}
The exact sequence \eqref{eq-I} induces an isomorphism of
exact sequences
\[
\xymatrix{0\ar[r] &
  \calH_1(\kk;E)\ar[r] &
  \calH_0(S_A\del_A;E)\ar[r] &
  \calM_A^0\ar[r] &
  \calH_0(\kk;E)\ar[r] & 0 \\
  0 \ar[r]&
  {\begin{array}{c}\underline{H^d_{dR}(T_B;\kk)}\\
      \oplus\\
      \underline{H^{d-1}_{dR}(T_B;\kk)}\end{array}}\ar[r] \ar[u]_\simeq &
  \calH^0 (q_{U+} \calO_U) \ar[r] \ar[u]_\simeq &
  \calH^0 (q_+ j_{U \dag} \calO_U) \ar[r] \ar[u]_\simeq &
  \underline{H^d_{dR}(T_B;\kk)} \ar[u]_\simeq \ar[r] & 0}
\]
\end{thm}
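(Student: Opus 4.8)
The plan is to produce the lower row by applying the composite functor $\Psi:=\FL\circ k_+\circ p^+$ to the elementary four-term sequence \eqref{eq-I} on $\hat{\AA}^1$, and then to match the result with the upper (Euler--Koszul) row through the chain Proposition \ref{prop:FL4termexseq} $\to$ Lemma \ref{lem:FLsidelift} $\to$ Proposition \ref{prp-FL-qplus}. By Proposition \ref{prp-FL-qplus} and the four displayed computations following it, $\Psi$ carries the elementary modules $\calD_{\hat{\AA}^1}\bullet\hat u$ to $q_+F^+(\calD_{\AA^1}\bullet u)$ under the pairing $(\hat u,u)=(\delta,1),(1,\delta),(\frakH,1/t),(1/t,\frakH)$; concretely
\[
\Psi(\calD\bullet\delta)=C,\quad \Psi(\calD\bullet 1)=q_{\Gamma+}\calO_\Gamma,\quad \Psi(\calD\bullet\frakH)=q_{U+}\calO_U,\quad \Psi(\calD\bullet 1/t)=q_+j_{U\dag}\calO_U,
\]
where I abbreviate $C:=q_+\calO_{T_B\times\AA^{n+1}}$.

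First I would split \eqref{eq-I} at its central term $\calD_{\hat{\AA}^1}\bullet 1$ into the two short exact sequences
\[
0\to\calD\bullet\delta\to\calD\bullet\frakH\to\calD\bullet 1\to 0,\qquad 0\to\calD\bullet 1\to\calD\bullet 1/t\to\calD\bullet\delta\to 0,
\]
which are the two-step filtrations of $\calD\bullet\frakH$ and $\calD\bullet 1/t$ by the constant module $\calD\bullet 1$ and the delta module $\calD\bullet\delta$. Applying the triangulated functor $\Psi$ turns these into the adjunction triangles
\[
C\to q_{U+}\calO_U\to q_{\Gamma+}\calO_\Gamma\xrightarrow{+1},\qquad q_{\Gamma+}\calO_\Gamma\to q_+j_{U\dag}\calO_U\to C\xrightarrow{+1},
\]
and exhibits the middle arrow $q_{U+}\calO_U\to q_+j_{U\dag}\calO_U$ of the lower row as the composite $q_{U+}\calO_U\to q_{\Gamma+}\calO_\Gamma\to q_+j_{U\dag}\calO_U$. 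On the other side, Proposition \ref{prop:FL4termexseq} together with Lemma \ref{lem:FLsidelift} identifies these two middle $\Psi$-images, and this very arrow, with the middle terms $\calH_0(S_A\del_A;E)\to\calM_A^0$ of the upper row; this is the heart of the comparison.

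It then remains to read off zeroth cohomology and to name the outer terms. Since $q$ is the projection with $d$-dimensional torus fibre $T_B$, one has $\calH^{-k}C=\underline{H^{d-k}_{dR}(T_B;\kk)}$ for $0\le k\le d$ and $\calH^{>0}C=0$. From the first triangle, $\calH^{>0}C=0$ gives a surjection $\calH^0 q_{U+}\calO_U\onto\calH^0 q_{\Gamma+}\calO_\Gamma$ whose kernel is the image of $\calH^0 C=\underline{H^d_{dR}(T_B;\kk)}$; from the second triangle, the kernel of $\calH^0 q_{\Gamma+}\calO_\Gamma\to\calH^0 q_+j_{U\dag}\calO_U$ is the image of the connecting map out of $\calH^{-1}C=\underline{H^{d-1}_{dR}(T_B;\kk)}$, and its cokernel embeds into $\calH^0 C=\underline{H^d_{dR}(T_B;\kk)}$. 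Splicing over $\calH^0 q_{\Gamma+}\calO_\Gamma$, the kernel of the middle arrow of the lower row collects exactly the summands $\underline{H^d_{dR}(T_B;\kk)}$ and $\underline{H^{d-1}_{dR}(T_B;\kk)}$, while its cokernel is $\underline{H^d_{dR}(T_B;\kk)}$. As $\dim_\kk H^d_{dR}(T_B)=1$ and $\dim_\kk H^{d-1}_{dR}(T_B)=d$, these free $\calO_{\AA^{n+1}}$-modules have ranks $d+1$ and $1$, matching $\calH_1(\kk;E)\simeq O_A^{d+1}$ and $\calH_0(\kk;E)\simeq O_A$ from \eqref{eq-EK-kk}. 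Combined with the identification of the middle, this yields the asserted isomorphism of four-term exact sequences, the exactness of the lower row being inherited from that of the upper one.

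The main obstacle is precisely this last cohomological bookkeeping: ensuring that the two adjunction triangles assemble into a genuine \emph{four}-term exact sequence of zeroth cohomology sheaves, with the kernel of the middle arrow receiving both $\calH^0 C$ and, through the connecting homomorphism, $\calH^{-1}C$, whereas the cokernel sees only $\calH^0 C$. This rests on the vanishing $\calH^{>0}C=0$, on the vanishing of the connecting maps that would otherwise pull $\calH^{\le-2}C$ or higher cohomology of $q_{\Gamma+}\calO_\Gamma$ into degree zero, and on the concentration of $q_{U+}\calO_U$ and $q_+j_{U\dag}\calO_U$ in the relevant degrees. The safe route is to transport exactness and the outer ranks directly from the already-exact upper row via Proposition \ref{prop:FL4termexseq}, using the triangles only to attach the geometric labels $\underline{H^d_{dR}(T_B;\kk)}$ and $\underline{H^{d-1}_{dR}(T_B;\kk)}$ to the kernel and cokernel.
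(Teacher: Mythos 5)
Your proposal is correct and its overall architecture is the same as the paper's: both rows are produced by applying the pair of functors $\FL\circ k_+\circ p^+$ and $q_+\circ F^+\circ\FL$ to a derived-category lift of \eqref{eq-I}, and the identification of the middle arrows rests on exactly the chain Lemma \ref{lem:FLsidelift} $\to$ Proposition \ref{prop:FL4termexseq} $\to$ Proposition \ref{prp-FL-qplus} that you invoke. The one genuine difference is the choice of lift. The paper uses the single adjunction triangle
$j_{0\dag}j_0^{-1}\calO_{\hat{\AA}^1}\to j_{0+}j_0^{-1}\calO_{\hat{\AA}^1}\to i_{0+}i_0^{\dag}j_{0+}j_0^{-1}\calO_{\hat{\AA}^1}\overset{+1}{\to}$,
whose third term is $(\calD_{\hat{\AA}^1}\bullet\delta)\oplus(\calD_{\hat{\AA}^1}\bullet\delta[1])$; the two outer terms of the four-term sequence then appear as $\calH^{-1}$ and $\calH^{0}$ of a single split object, so the direct-sum decomposition $\underline{H^d_{dR}(T_B;\kk)}\oplus\underline{H^{d-1}_{dR}(T_B;\kk)}$ of the kernel comes for free. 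You instead factor the middle map of \eqref{eq-I} through the intermediate extension $\calD_{\hat{\AA}^1}\bullet 1$ and splice two triangles. This is legitimate (the image of the adjunction $j_{0\dag}\to j_{0+}$ is indeed $\calO_{\hat{\AA}^1}$, and your second short exact sequence is precisely \eqref{eq-II}, so your route simultaneously sets up the paper's subsequent comparison with $q_{\Gamma+}\calO_{\Gamma}$), but it costs you an extra splitting argument for the two-step filtration of the kernel of the composite (which does hold, since $\Ext^1_{\calD_{\AA^{n+1}}}(\calO,\calO)=H^1_{dR}(\AA^{n+1})=0$) and more connecting maps to control. Your closing observation is the right one and applies to both versions: injectivity and surjectivity at the outer terms do not follow from the triangles alone but are transported from the exactness of the upper row already established in Proposition \ref{prop:FL4termexseq}, combined with the remark that a surjection (resp.\ injection) between semisimple $\calD$-modules of equal finite length, here $\calO^{d+1}$ (resp.\ $\calO$), is an isomorphism; the paper's own proof leaves this last step equally implicit.
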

\begin{proof}
The sequence \eqref{eq-I} is part of the long exact sequence coming from
the triangle
\[
j_{0 \dag}j_0^{-1} \calO_{\hat{\AA}^1} \lra j_{0 +} j_0^{-1} \calO_{\hat{\AA}^1} \lra i_{0+} i_0^\dag j_{0 +} j_0^{-1} \calO_{\hat{\AA}^1} \overset{+1}\lra
\]
which is isomorphic to
\[
\calD_{\hat{\AA}^1} \bullet \frakH \lra \calD_{\hat{\AA}^1} \bullet 1/t \lra
(\calD_{\hat{\AA}^1} \bullet \delta) \oplus (\calD_{\hat{\AA}^1} \bullet
\delta[1]) \overset{+1}\lra
\]
Applying the concatenated functor $\textup{FL} \circ k_+ p^+$ to the
triangle above and using Lemma \ref{lem:FLsidelift}, Proposition
\ref{prop:FL4termexseq}, and the fact that $\calH^i(k_{0+}\calO_{T_B})=
{\calB_0}^{\binom{d}{i}}$ we obtain the upper sequence in the
theorem. (Recall that $k_0$ sends $T_B$ to the origin in
$\hat{\AA}^{n+1}$). Applying $q_+ F^+ \circ \FL$ instead gives the lower
sequence.
\end{proof}

If one applies $q_+ F^+ \circ \FL$ to the exact sequence
\begin{align}\label{eq-II}
 0 \lra \calD_{\hat{\AA}^1} \bullet 1 \lra \calD_{\hat{\AA}^1} \bullet 1/t
 \lra \calD_{\hat{\AA}^1} \bullet \delta \lra 0
\end{align}
one obtains as a part of the resulting long exact sequence the piece
\begin{equation}\label{eq:seqII}
0 \lra \underline{H^{d-1}_{dR}(T_B;\kk)} \lra \calH^0(q_{\Gamma+}
\calO_\Gamma) \lra \calH^0(q_+ j_{U \dag} \calO_U) \lra
\underline{H^d_{dR}(T_B;\kk)} \lra 0,
\end{equation}
We  now determine how this sequence relates to the two sequences
in Theorem \ref{thm-GM-toric}.
\begin{prp}
The exact sequence \eqref{eq:seqII}
is the quotient of the exact sequence 
\[
0 \lra \underline{H^{d}_{dR}(T_B;\kk)} \oplus
\underline{H^{d-1}_{dR}(T_B;\kk)} \lra \calH^0(q_{U+} \calO_U) \lra
\calH^0(q_+ j_{U \dag} \calO_U) \lra \underline{H^d_{dR}(T_B;\kk)} \lra 0
\]
by the exact sequence 
\[
0 \lra \underline{H^{d}_{dR}(T_B;\kk)} \lra \underline{H^{d}_{dR}(T_B;\kk)} \lra 0 \lra 0 \lra 0.
\]
\end{prp}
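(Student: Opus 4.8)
The plan is to deduce the statement from a single localization triangle, by recognising the four-term sequence \eqref{eq-I} as a Yoneda splice built from \eqref{eq-II}. First I would record the elementary algebraic fact that \eqref{eq-I} is the splice of \eqref{eq-II} with the short exact sequence
\[
0\lra \calD_{\hat{\AA}^1}\bullet\delta\lra \calD_{\hat{\AA}^1}\bullet\frakH\lra \calD_{\hat{\AA}^1}\bullet 1\lra 0
\]
(call it $(S_1)$) along the common module $\calD_{\hat{\AA}^1}\bullet 1$: the middle arrow $\calD_{\hat{\AA}^1}\bullet\frakH\to\calD_{\hat{\AA}^1}\bullet 1/t$ of \eqref{eq-I} factors through its image $\calD_{\hat{\AA}^1}\bullet 1$, as the surjection of $(S_1)$ followed by the injection of \eqref{eq-II}. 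Exactness of $(S_1)$ is immediate from $\calD_{\hat{\AA}^1}\, t\del_t\subset\calD_{\hat{\AA}^1}\del_t$, the kernel of $\calD_{\hat{\AA}^1}/\calD_{\hat{\AA}^1}t\del_t\onto\calD_{\hat{\AA}^1}/\calD_{\hat{\AA}^1}\del_t$ being canonically $\calD_{\hat{\AA}^1}/\calD_{\hat{\AA}^1}t=\calD_{\hat{\AA}^1}\bullet\delta$.

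Next I would apply $G:=q_+F^+\circ\FL$ to $(S_1)$. By Proposition \ref{prp-FL-qplus} together with the identifications $q_+F^+(\calD_{\AA^1}\bullet 1)\simeq q_+\calO_{T_B\times\AA^{n+1}}$, $q_+F^+(\calD_{\AA^1}\bullet 1/t)\simeq q_{U+}\calO_U$ and $q_+F^+(\calD_{\AA^1}\bullet\delta)\simeq q_{\Gamma+}\calO_\Gamma$, the functor $G$ sends $(S_1)$ to the localization triangle
\[
q_+\calO_{T_B\times\AA^{n+1}}\lra q_{U+}\calO_U\lra q_{\Gamma+}\calO_\Gamma\overset{+1}\lra .
\]
Since $q$ is the projection with fibre $T_B$, the complex $q_+\calO_{T_B\times\AA^{n+1}}$ has cohomology $\underline{H^d_{dR}(T_B;\kk)}$ in degree $0$ and vanishes in positive degrees. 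The long exact $\calH^\bullet$-sequence of the triangle then produces the short exact sequence
\[
0\lra \underline{H^d_{dR}(T_B;\kk)}\lra \calH^0(q_{U+}\calO_U)\lra \calH^0(q_{\Gamma+}\calO_\Gamma)\lra 0 .
\]

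Finally I would assemble the comparison. The morphism of four-term exact sequences \eqref{eq-I}$\to$\eqref{eq-II} that is zero on the first term, the surjection of $(S_1)$ on the second, and the identity on the last two, is carried by $G$ (functorially, via naturality of the associated long exact cohomology sequences) to a morphism from the lower row of Theorem \ref{thm-GM-toric} to \eqref{eq:seqII}. On the shared right half this morphism is the identity (both rows produce $\calH^0(q_+j_{U\dag}\calO_U)$ and $\underline{H^d_{dR}(T_B;\kk)}$ from \eqref{eq-II}); on the first term it is the projection $\underline{H^d_{dR}(T_B;\kk)}\oplus\underline{H^{d-1}_{dR}(T_B;\kk)}\onto\underline{H^{d-1}_{dR}(T_B;\kk)}$; and on the second term it is the surjection just obtained. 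The morphism is therefore termwise surjective, and its kernel is exactly the four-term sequence $0\to\underline{H^d_{dR}(T_B;\kk)}\to\underline{H^d_{dR}(T_B;\kk)}\to 0\to 0\to 0$ of the statement.

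I expect the main obstacle to be the verification that this kernel sequence is exact — equivalently, that the distinguished summand $\underline{H^d_{dR}(T_B;\kk)}$ in the first term of Theorem \ref{thm-GM-toric} is carried isomorphically onto $\ker\bigl(\calH^0(q_{U+}\calO_U)\to\calH^0(q_{\Gamma+}\calO_\Gamma)\bigr)$, the image of $\calH^0(q_+\calO_{T_B\times\AA^{n+1}})$ in the sequence displayed above. This requires tracking the connecting homomorphisms through the octahedron relating the three triangles obtained by applying $G$ to $(S_1)$, to \eqref{eq-II}, and to \eqref{eq-I}, and checking that the edge map of the splice restricts to the edge map of the localization sequence. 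Once the cohomological degrees are pinned down — the top cohomology of $T_B$ sitting in degree $d$ and $\calH^{>0}(q_+\calO_{T_B\times\AA^{n+1}})=0$ — this identification and the commutativity of all comparison squares should follow formally from naturality.
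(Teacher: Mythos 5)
Your proposal is correct and follows essentially the same route as the paper: the paper's proof likewise reduces everything to a commutative diagram of the elementary $\calD_{\AA^1}$-modules $1$, $1/t$, $\delta$, $\frakH$ (your splice of \eqref{eq-I} along $\calD_{\hat{\AA}^1}\bullet 1$ is exactly the middle column $\calD_{\AA^1}\bullet 1\into\calD_{\AA^1}\bullet 1/t\onto\calD_{\AA^1}\bullet\delta$ of that diagram), applies $q_+F^+$, and deduces termwise surjectivity from the exactness of $F^+$ and right-exactness of $\calH^0q_+$, which is your $\calH^{>0}(q_+\calO_{T_B\times\AA^{n+1}})=0$. The only remark worth adding is that the exactness of the kernel row, which you flag as the main remaining obstacle, is automatic once termwise surjectivity and exactness of both rows are in hand (long exact sequence of the short exact sequence of complexes), so no octahedron chase is actually needed.
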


\begin{proof}
Consider the Fourier--Laplace transforms of the sequences \eqref{eq-I}
and \eqref{eq:seqII}. We get a commutative diagram with exact rows and
columns:
\[
\xymatrix{ & 0 \ar[r] & \calD_{\AA^1} \bullet 1 \ar[r] \ar[d] & \calD_{\AA^1} \bullet 1 \ar@{^{(}->}[d] \ar[r] & 0 \ar[r] \ar[d] & 0 \ar[r] \ar[d] & 0 \\ 
 & 0 \ar[r] & \calD_{\AA^1} \bullet 1 \ar[r] \ar[d] &  \calD_{\AA^1}  \bullet 1/t \ar[r] \ar@{->>}[d] & \calD_{\AA^1}  \bullet \frakH \ar[r] \ar[d] & \calD_{\AA^1}  \bullet 1 \ar[r] \ar[d] & 0 \\
 &  0 \ar[r] & 0 \ar[r] &  \calD_{\AA^1}  \bullet \delta \ar[r]  & \calD_{\AA^1}  \bullet \frakH \ar[r]  & \calD_{\AA^1}  \bullet 1 \ar[r]  & 0}
\]
and morphisms of triangles
\[
\xymatrix{ \calD_{\AA^1} \bullet 1 \ar[r] \ar[d] & 0 \ar[r] \ar[d] & (\calD_{\AA^1} \bullet 1) [1] \ar[r]^-{+1} \ar[d] & \\  
\calD_{\AA^1}  \bullet 1/t \ar[r] \ar[d] & \calD_{\AA^1}  \bullet \frakH \ar[r] \ar[d] & (\calD_{\AA^1}\bullet 1) \oplus (\calD_{\AA^1} \bullet 1) [1] \ar[d] \ar[r]^-{+1} & \\
 \calD_{\AA^1}  \bullet \delta \ar[r]  & \calD_{\AA^1}  \bullet \frakH \ar[r]  & \calD_{\AA^1}  \bullet 1 \ar[r]^-{+1}&}
\]
From this, we get an
exact sequence of exact rows
\[
\xymatrix{0 \ar[r] &   \underline{H^{d}_{dR}(T_B;\kk)} \ar[r] \ar@{^{(}->}[d]& \underline{H^{d}_{dR}(T_B;\kk)} \ar[r] \ar@{^{(}->}[d] & 0 \ar[r] \ar[d] & 0 \ar[r] \ar[d] & 0 \\
0 \ar[r]&  \underline{H^{d}_{dR}(T_B;\kk)} \oplus  \underline{H^{d-1}_{dR}(T_B;\kk)} \ar[r] \ar@{->>}[d] & \calH^0(q_{U+} \calO_U)  \ar[r] \ar@{->>}[d] & \calH^0(q_+ j_{U \dag} \calO_U) \ar[r] \ar[d]& \underline{H^d_{dR}(T_B;\kk)} \ar[r] \ar[d]& 0 \\
0 \ar[r] &  \underline{H^{d-1}_{dR}(T_B;\kk)} \ar[r] & \calH^0(q_{\Gamma+} \calO_\Gamma) \ar[r] & \calH^0(q_+ j_{U \dag} \calO_U) \ar[r] & \underline{H^d_{dR}(T_B;\kk)} \ar[r] & 0}
\]
The lower middle maps are surjective since $F^+$ is
exact and $q_+$ is right exact. 
\end{proof}

We now introduce a family of Laurent polynomials defined on $T_B \times
\AA^n$ using the columns of the matrix $B$. For this, recall
Definition 
\ref{def:Atilde} and  consider the ring
homomorphism
\begin{align}
\kk[\lambda_0, \ldots, \lambda_n] &\lra \kk[t_1^\pm, \ldots ,t_d^\pm]
\otimes_\kk \kk[\lambda_1, \ldots, \lambda_n] \notag \\ \lambda_i
&\mapsto \begin{cases} -\sum_{i=1}^n t^{\boldb_i}\otimes \lambda_i &
  \text{for}\; i= 0; \\ \lambda_i & \text{for}\;
  i=1,\ldots,n,\end{cases}
\end{align}
which induces a family of Laurent polynomials
\begin{align}\label{eq:FamLaurent}
\varphi_B\colon T_B \times \AA^n &\lra \AA^{n+1} = \AA^{1} \times \AA^{n} \, .
\end{align}
and an isomorphism
\begin{align}
i_\varphi: T_B \times \AA^n &\lra \Gamma \subseteq T_B \times \AA \times
\AA^n \notag
\end{align}
onto the graph $\Gamma$. Hence $\varphi_B = q_\Gamma \circ
i_\varphi$ and therefore $\calH^0(\varphi_{B+} \calO_{T_B \times
  \AA^n}) \simeq \calH^0(q_{\Gamma +} \calO_\Gamma)$.

This recovers a special case of a theorem of \cite{Reich2},
\emph{i.e.} there is an exact sequence
\begin{gather}\label{eq:seqIII}
\xymatrix{0 \ar[r] & \underline{H^{d-1}_{dR}(T_B;\kk)} \ar[r] &
  \calH^0(\varphi_{B+} \calO_{T_B \times \AA^n}) \ar[r] & \calM^0_A
  \ar[r] & \underline{H^d_{dR}(T_B;\kk)} \ar[r] & 0}
\end{gather}
which is isomorphic to the sequence \eqref{eq:seqII}.

\subsection{Vanishing Gau\ss--Manin system and the extension class}

In this section we show that the $A$-hypergeometric system is an
extension of a trivial vector bundle of rank one
by the quotient of 
a Gau\ss--Manin system modulo
its flat sections. We show
that this extension does not split.

As before,
$\beta=0$ (and $A$ is saturated, homogeneous, and pointed).

\begin{dfn}
  The \emph{vanishing Gau\ss--Manin system} $\calV$ with respect to
  the map $\varphi_B$ is the cokernel of the map
  $\underline{H^{d-1}_{dR}(T_B\kk)} \lra \calH^0(\varphi_{B+}
  \calO_{T_B \times \AA^n})$. In other words,
  \begin{equation}\label{eq:Pshortexseq}
    0 \lra \mcv \lra \calM^0_A \lra \underline{H^d_{dR}(T_B;\kk)} \lra 0
  \end{equation}
  is exact.  We write $V_A=\Gamma(\AA^{n+1},\calV_A)$ and note the
  short exact sequence
  \[
  0\to V_A\to M^0_A\to O_A\to 0.
  \]
\schluss\end{dfn}
The terminology is borrowed from the vanishing cohomology of a
hyperplane section $j:X \hookrightarrow Y$ of an $n$-dimensional
projective variety $Y$ which is a direct summand $H^{n-1}(X) =
H^{n-1}(X)_{van} \oplus j^*H^{n-1}(Y)$.

The sheaf $\calV$ appears perhaps for the first time in Stienstra's
article \cite[Formula (61)]{Stienstra98}, essentially as a restriction
of \eqref{eq:Pshortexseq} to the smooth locus (where all sheaves in
\eqref{eq:seqII} become vector bundles). However, our situation is
more general even in Stienstra's set-up since in \cite{Stienstra98} the matrix
$B$ is assumed to be homogeneous while it is arbitrary for us.

A natural question is: what is the extension class of $\calM^0_A$ inside the
sequence \eqref{eq:Pshortexseq}? Our next result answers this question,
confirming a prediction of Duco van Straten.

\begin{thm}\label{thm-calcExt}
Write $\calO$ for $H^d_{dR}(T_B;\kk)
\otimes \calO_{\AA^{n+1}}$.  There are natural (in $\kk$) isomorphisms 
  \[
  \Ext^i_\calD(\calO,\calV)\simeq \begin{cases}
    \kk &
    \text{for}\; i = 1 \\ 0 & \text{else.} \end{cases} 
  \]
  The class of the sequence \eqref{eq:Pshortexseq} is nonzero and
  induced by the identity on $\calO$ under the connecting morphism.
\end{thm}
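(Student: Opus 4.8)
The plan is to read off everything from the short exact sequence \eqref{eq:Pshortexseq} by feeding it into the long exact sequence of $\Ext^\bullet_\calD(\calO,-)$, once I know that the middle term $\calM_A^0$ is $\Ext$-invisible to $\calO$. First I would note that $T_B$ is a $d$-dimensional torus, so $H^d_{dR}(T_B;\kk)\simeq\bigwedge^d\kk^d\simeq\kk$ is one-dimensional and hence $\calO=\underline{H^d_{dR}(T_B;\kk)}\simeq\calO_{\AA^{n+1}}$ as $\calD_{\AA^{n+1}}$-modules. Consequently $\Ext^i_\calD(\calO,\calO)=H^i_{dR}(\AA^{n+1};\kk)$, which is $\kk$ for $i=0$ and $0$ otherwise, while $\Hom_\calD(\calO,\calO)=\kk\cdot\id_\calO$.

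The key input is the vanishing $\Ext^\bullet_\calD(\calO,\calM_A^0)=0$. Since $\AA^{n+1}$ is affine these sheaf-$\Ext$ groups agree with the module groups $\Ext^\bullet_{D_A}(O_A,M_A^0)$, which by Corollary \ref{cor-TorExt-zero} vanish unless $\beta=0$ lies in $\calE_A\cup\deg_A(H^{\dim S_A}_{\del_A}(S_A))$. Normality of $\NN A$ makes $S_A$ Cohen--Macaulay, so $\calE_A=\emptyset$; and by Lemma \ref{lem-deg-lc}, applied to the full face $\tau=A$, no element of $\NN A$ --- in particular not $0$ --- is a degree of $\rmR\Gamma_{\del_A}(S_A)$, so $0\notin\deg_A(H^{\dim S_A}_{\del_A}(S_A))$. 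Hence $\Ext^\bullet_\calD(\calO,\calM_A^0)=0$.

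With this in hand I would apply $\Ext^\bullet_\calD(\calO,-)$ to \eqref{eq:Pshortexseq}. Because the middle term contributes zero in every cohomological degree, the connecting maps become isomorphisms $\delta\colon\Ext^i_\calD(\calO,\calO)\xrightarrow{\ \sim\ }\Ext^{i+1}_\calD(\calO,\calV)$ for all $i$. Combined with the computation of $\Ext^i_\calD(\calO,\calO)$ above, this gives $\Ext^i_\calD(\calO,\calV)=\kk$ for $i=1$ and $0$ otherwise; naturality in $\kk$ is inherited from the naturality of every functor and connecting map used. For the extension class, the relevant instance is the connecting map $\delta\colon\Hom_\calD(\calO,\calO)\to\Ext^1_\calD(\calO,\calV)$, which by the standard Yoneda description of the connecting homomorphism attached to \eqref{eq:Pshortexseq} sends $\id_\calO$ to the class of that extension. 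As $\delta$ is an isomorphism and $\id_\calO\neq0$, the class is nonzero, so the sequence does not split.

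The only real work is the vanishing $\Ext^\bullet_\calD(\calO,\calM_A^0)=0$; everything afterwards is formal manipulation of the long exact sequence together with the tautological identification of the connecting image of $\id_\calO$ with the Yoneda class of \eqref{eq:Pshortexseq}. I expect the mild subtlety to be confirming $0\notin\deg_A(H^{\dim S_A}_{\del_A}(S_A))$, which is exactly what Lemma \ref{lem-deg-lc} supplies.
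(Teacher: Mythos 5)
Your proposal is correct and follows essentially the same route as the paper: reduce to global sections over the affine space, kill the middle term via Corollary \ref{cor-TorExt-zero} (with $\calE_A=\emptyset$ from normality and Lemma \ref{lem-deg-lc} handling $0\in\NN A$), read off $\Ext^i_\calD(\calO,\calV)\simeq\Ext^{i-1}_\calD(\calO,\calO)$ from the long exact sequence, and identify the image of $\id_\calO$ under the connecting isomorphism with the Yoneda class of \eqref{eq:Pshortexseq}. You in fact spell out the vanishing hypotheses slightly more explicitly than the paper does, which is a welcome addition rather than a deviation.
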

\begin{proof}
  Since $\AA^{n+1}$ is affine it suffices to compute on the level of
  global sections.  By Corollary \ref{cor-TorExt-zero},
  $\Ext^\bullet_{D_A}(O_A,M_A^\beta)$ vanishes for $\beta\in \NN A$.
  Hence, $\Ext^i_{D_A}(O_A,V_A)=\Ext^{i-1}_{D_A}(O_A,O_A)$ and so has
  exactly the prescribed $\kk$-space structure. In particular,
  \eqref{eq:Pshortexseq} does not split.
  
  The class of \eqref{eq:Pshortexseq} inside $\Ext^1_{D}(O_A,V)\simeq
  \Ext^{0}_{D}(O_A,O_A)$ is the image of the identity on $\calO_A$
  under the connecting morphism induced by \eqref{eq:Pshortexseq},
  compare \cite[Sec.~3.4]{Weibel}. Since the connecting
  morphism is  an isomorphism, this element is nontrivial.
\end{proof}

\bibliographystyle{amsalpha}
\bibliography{extgkz}

\end{document}